\newtheorem{theorem}{Theorem}[section]
\newtheorem{lemma}[theorem]{Lemma}
\newtheorem{proposition}[theorem]{Proposition}
\theoremstyle{definition}
\newtheorem{definition}[theorem]{Definition}
\newtheorem{remark}[theorem]{Remark}
\newcommand{\eps}{\varepsilon}					   
\newcommand{\Tt}{{\mathbb{T}}}
\newcommand{\bx}{{\bf x}}
\newcommand{\bu}{{\bf u}}
\newcommand{\lb}{\left(}
\newcommand{\rb}{\right)}
\definecolor{darkgreen}{rgb}{0.,.66,0.}
\newcommand{\js}[1]{\ifthenelse{\boolean{include-notes}}%
 {\textcolor{cyan}{\textbf{[ #1  --JS]}}}{}}
\newcommand{\dg}[1]{\ifthenelse{\boolean{include-notes}}%
 {\textcolor{darkgreen}{\textbf{[ #1  --DG]}}}{}}
  \newcommand{\lv}[1]{\ifthenelse{\boolean{include-notes}}%
 {\textcolor{magenta}{\textbf{[ #1  --LN]}}}{}}
\theoremstyle{definition}
\begin{document}

\title[Fourier Approximation Methods for Nonlocal First-Order Mean-Field Games]{Fourier Approximation Methods for First-Order Nonlocal Mean-Field Games}

\author{L. Nurbekyan}
\address[L. Nurbekyan]{Department of Mathematics and Statistics, McGill University, 805 Sherbrooke St W, Montreal, H3A 0B9 QC, Canada}
\email{levon.nurbekyan@mcgill.ca}
\author{J. Sa\'ude}
\address[J. Saude]{Electrical and Computer Engineering department, Carnegie Mellon University. 5000 Forbes Avenue Pittsburgh, PA 15213-3890 USA.}
\email{jsaude@cmu.edu}

\keywords{Infinite-dimensional differential games, Mean-field games, Nonlocal interactions, Fourier expansions.}
\subjclass[2010]{35Q91, 35Q93, 35A01} 

\date{\today}

\begin{abstract}
In this note, we develop Fourier approximation methods for the solutions of first-order nonlocal mean-field games (MFG) systems. Using Fourier expansion techniques, we approximate a given MFG system by a simpler one that is equivalent to a convex optimization problem over a finite-dimensional subspace of continuous curves. Furthermore, we perform a time-discretization for this optimization problem and arrive at a finite-dimensional saddle point problem. Finally, we solve this saddle-point problem by a variant of a primal dual hybrid gradient method.
\end{abstract}

\maketitle
\tableofcontents




\section{Introduction}
The mean-field game (MFG) framework \cite{LL061,LL062,LL07,HCM06,HCM07} models systems with a huge number of small identical rational players (agents) that play non-cooperative differential games. In this framework, a generic player aims at minimizing a cost functional that takes the distribution of the whole population as a parameter. Consequently, the problem is to find a Nash equilibrium where a generic player cannot unilaterally improve his position. For a detailed account on MFG systems we refer the reader to \cite{LCDF,CardaNotes,gll'11,befreyam'13,Gomes2014,GoBook,cardela'18,notebari}.

In this note, we introduce Fourier approximation techniques for first-order nonlocal MFG models. More precisely, we consider the system
\begin{equation}\label{eq:maingeneral}
\begin{cases}
-\partial_t u + H(x,\nabla u) = F[x,m], \\
\partial_t m - \mathrm{div}\left(m \nabla_pH(x,\nabla u)\right)=0,~(x,t) \in \mathbb{T}^d \times [0,1],\\
m(x,0)=M(x),~u(x,1)=U(x),~x\in \mathbb{T}^d.
\end{cases}
\end{equation}
Here, $u: \mathbb{T}^d \times [0,1]\to \mathbb{R}$ and $m:\mathbb{T}^d \times [0,1]\to \mathbb{R}_+$ are the unknown functions. Furthermore, $H\in C^2(\mathbb{T}^d \times \mathbb{R}^d)$ is a Hamiltonian, and $F:\mathbb{T}^d \times \mathcal{P}(\mathbb{T}^d) \to \mathbb{R}$ is a nonlocal coupling term between the Hamilton-Jacobi and Fokker-Planck equations. Above, $\mathbb{T}^d$ is the d-dimensional flat torus, and $\mathcal{P}(\mathbb{T}^d)$ is the space of Borel probability measures on $\mathbb{T}^d$. Next, $U\in C^2(\mathbb{T}^d)$ and $M\in L^\infty(\mathbb{T}^d)\cap \mathcal{P}(\mathbb{T}^d)$ (with a slight abuse of notation we identify the absolutely continuous measures with their densities) are terminal-initial conditions for $u$ and $m$, respectively.

In \eqref{eq:maingeneral}, $u$ represents the value function of a generic agent from a continuum population of players, whereas $m$ represents the density of this population. Each agent aims at solving the optimization problem
\begin{equation}\label{eq:generic_optim}
u(x,t)=\inf_{\gamma \in H^1([t,1]),\gamma(t)=x} \int_t^1 L(\gamma(s),\dot{\gamma}(s))+F(\gamma(s),m(\cdot,s))ds+U(\gamma(1)),
\end{equation}
where $L$ is the Legendre transform of $H$; that is,
\begin{equation*}
L(x,v)=\sup_p -v\cdot p - H(x,p),~(x,v)\in \mathbb{T}^d \times \mathbb{R}^d.
\end{equation*}
Hence, $U$ is a terminal cost function. Since a generic agent is small and her actions on the population distribution can be neglected, we assume that $m$ is fixed, but unknown, in \eqref{eq:generic_optim}. Consequently, $u$ must solve a Hamilton-Jacobi equation; that is, the first PDE in \eqref{eq:maingeneral} with terminal data $U$.

Furthermore, given $u$, optimal trajectories of agents are determined by
\begin{equation*}
\dot{\gamma}(s)=-\nabla_p H(\gamma(s),\nabla u(\gamma(s),s)).
\end{equation*}
Therefore, $m$, being the population density, must satisfy the Fokker-Planck equation; that is, the second PDE in \eqref{eq:maingeneral} with initial data $M$. Hence, $M$ is the population density at time $t=0$. 

The existence, uniqueness and stability theories for \eqref{eq:maingeneral} are well understood \cite{LL07,CardaNotes,carda'13}. Here, we are specifically interested in approximation methods for the solutions of \eqref{eq:maingeneral} that can be useful for numerical solution and modeling purposes.

Currently, there are number of efficient approximation methods for solutions of MFG systems. We refer to \cite{achdou'13,achdcamdolcetta'13,achporretta'16,achddolcetta'10} for finite-difference schemes, \cite{carsilva'14,carsilva'15} for semi-Lagrangian methods, \cite{bencar'15,andreev'17,bencarsan'17,bencarmarnen'18,silva'18} for convex optimization techniques, \cite{alfego'17,GJS2} for monotone flows, and \cite{osher'18b} for infinite-dimensional Hamilton-Jacobi equations. Although general, the aforementioned methods are particularly advantageous when $F$ in \eqref{eq:maingeneral} depends locally on $m$. The reason is that local $F$ yield analytic pointwise formulas for infinite-dimensional operators involved in the algorithms. Instead, nonlocal $F$ do not yield such formulas. Additionally, fixed-grid methods suffer from dimensionality issues. Also, the number of inter-nodal couplings grows significantly for nonlocal $F$ which leads to an increased complexity of such schemes. Hence, we are interested in developing approximation methods that specifically suit nonlocal $F$ and are grid-free.

Our approach is based on a Fourier approximation of $F$ and is inspired by the methods in \cite{nurbe'18}. Here, we use the classical trigonometric polynomials as an approximation basis. Nevertheless, our method is flexible and allows more general bases. For instance, one may consider \eqref{eq:maingeneral} on different domains and boundary conditions and choose a basis accordingly.

Additionally, our approach yields a mesh-free numerical approximation of $u$ and $m$. More precisely, we directly recover the optimal trajectories of the agents rather than the values of $u$ and $m$ on a given mesh. In particular, our methods may blend well with recently developed ideas for fast and curse-of-the-dimensionality-resistant solution approach for first-order Hamilton-Jacobi equations \cite{osher'17,osher'18,Yegorov2018}. Hence, our techniques may lead to numerical schemes for nonlocal MFG that are efficient in high dimensions.

To avoid technicalities, we consider a linear $F$. More precisely, we assume that
\begin{equation*}
	F(x,m)=\int_{\mathbb{T}^d} K(x,y) m(y,t)dy,~x\in \mathbb{T}^d,~m\in \mathcal{P}(\mathbb{T}^d),
\end{equation*}
where $K\in C^2(\mathbb{T}^d \times \mathbb{T}^d)$. Thus, here we deal with the system
\begin{equation} \label{eq:main}
\begin{cases}
-\partial_t u + H(x,\nabla u) = \int_{\mathbb{T}^d} K(x,y) m(y,t) dy, \\
\partial_t m - \mathrm{div}(m \nabla_p H(x,\nabla u))=0,~(x,t) \in \mathbb{T}^d \times [0,1],\\
m(x,0)=M(x),~u(x,1)=U(x),~x\in \mathbb{T}^d.
\end{cases}
\end{equation}	

Our basic idea is to show that when $K$ is a generalized polynomial in a given basis then \eqref{eq:main} is equivalent to a fixed point problem, in a space of continuous curves, that has nice structural properties. In particular, when $K$ is symmetric and positive semi-definite, \eqref{eq:main} is equivalent to a convex optimization problem in the space of continuous curves.

Furthermore, we discuss how to construct generalized polynomial kernels that approximate a given $K$. Additionally, we observe that for translation invariant $K$ the approximating kernels have a particularly simple structure. Consequently, for such $K$ the aforementioned optimization problem is much simpler to solve.

The paper is organized as follows. In Section \ref{sec:prelim}, we present standing assumptions and some preliminary results. In Section \ref{sec:optim}, we prove the equivalence of \eqref{eq:main} to a fixed point problem over the space of continuous curves when $K$ is a generalized polynomial. Next, in Section \ref{sec:approx}, we discuss approximation methods for a general kernel. Furthermore, in Section \ref{sec:a_numerical_method}, we construct a discretization for the optimization problem from Section \ref{sec:optim} and devise a variant of a primal dual hybrid gradient algorithm for the discrete problem. Finally, in Section \ref{sec:numerical_examples}, we study several numerical examples.

\section{Assumptions and preliminary results}\label{sec:prelim}

We denote by $\mathbb{T}^d$ the $d$-dimensional flat torus. Furthermore, throughout the paper, we assume that $H \in C^2(\mathbb{T}^d \times \mathbb{R}^d)$, and
\begin{equation}\label{eq:H_hyp}
\begin{split}
\frac{1}{C}I_d \leq& \nabla^2_{pp} H(x,p)\leq C I_d,~\forall (x,p)\in \mathbb{T}^d \times \mathbb{R}^d,\\ 
-C(1+|p|^2) \leq& \nabla_x H(x,p) \cdot p,~\forall (x,p)\in \mathbb{T}^d \times \mathbb{R}^d,
\end{split}
\end{equation}
for some constant $C>0$. Next, we assume that $M\in L^\infty(\mathbb{T}^d)\cap \mathcal{P}(\mathbb{T}^d),~U\in C^2(\mathbb{T}^d),~K\in C^2(\mathbb{T}^d \times \mathbb{T}^d)$, and
\begin{equation}\label{eq:MUbounds}
\|M\|_{L^\infty(\mathbb{T}^d)},~\|U\|_{C^2(\mathbb{T}^d)},~\|K\|_{C^2(\mathbb{T}^d\times \mathbb{T}^d)} \leq C.
\end{equation}
Additionally, we suppose that $K$ is positive semi-definite; that is,
\begin{equation}\label{eq:K_mon}
\int_{\mathbb{T}^d\times \mathbb{T}^d} K(x,y)f(x)f(y)dxdy \geq 0,~\forall f\in  L^{\infty}(\mathbb{T}^d).
\end{equation}
We call $K$ symmetric if
\begin{equation}\label{eq:K_sym}
K(x,y)=K(y,x),~\forall x,y \in \mathbb{T}^d.
\end{equation}
Next, we denote by $\mathcal{P}(\mathbb{T}^d)$ the space of Borel probability measures on $\mathbb{T}^d$. We equip $\mathcal{P}(\mathbb{T}^d)$ with the Monge-Kantorovich distance that is given by
\begin{equation}\label{eq:MK}
\|m_2-m_1\|_{MK} =\sup \left\{\int_{\mathbb{T}^d}\phi(x) (m_2(x)-m_1(x))dx~\mbox{s.t.}~\|\phi\|_{\mathrm{Lip}}\leq 1\right\}.
\end{equation}

In the rest of this section, we present some preliminary results and formulas. For the optimal control and related Hamilton-Jacobi equations theory we refer to \cite{fleson'93,bardidolcetta'97}. We begin by the definition of a solution for \eqref{eq:main}.
\begin{definition}
A pair $(u,m)$ is a solution of \eqref{eq:main} if $u\in W^{1,\infty}(\mathbb{T}^d\times [0,1])$ is a viscosity solution of
\begin{equation}\label{eq:HJinMFG}
\begin{cases}
-\partial_t u+H(x,\nabla u)= \int_{\mathbb{T}^d} K(x,y) m(y,t) dy,~(x,t)\in \mathbb{T}^d\times[0,1],\\
u(x,1)=U(x),~x\in \mathbb{T}^d,
\end{cases}
\end{equation}
and $m\in L^\infty(\mathbb{T}^d\times [0,1])\cap C\left([0,1];\mathcal{P}(\mathbb{T}^d)\right)$ is a distributional solution of
\begin{equation}\label{eq:FPinMFG}
\begin{cases}
\partial_t m-\mathrm{div}(m \nabla_p H(x,\nabla u))= 0,~(x,t)\in \mathbb{T}^d\times[0,1],\\
m(x,0)=M(x),~x\in \mathbb{T}^d.
\end{cases}
\end{equation}
\end{definition}

The following theorem \cite{LL07,CardaNotes,carda'13} asserts that \eqref{eq:main} is well-posed.
\begin{theorem}\label{thm:MFGwellposed}
\begin{itemize}
\item[i.] Under assumptions \eqref{eq:H_hyp} and \eqref{eq:MUbounds}, system \eqref{eq:main} admits a solution $(u,m)$. Moreover, there exists a constant $C_1(C)>0$ such that
\begin{equation}\label{eq:um_bounds}
\nabla^2_{xx} u,~\|u\|_{W^{1,\infty}},~\|m\|_{L^\infty} \leq C_1,
\end{equation}
for any solution $(u,m)$. Additionally, if \eqref{eq:K_mon} holds then $(u,m)$ is unique.

\item[ii.] Solutions of \eqref{eq:main} are stable with respect to variations of $U,M$ and $K$ in respective norms. Particularly, suppose that $\{K_r\}_{r=1}^\infty \subset C^2(\mathbb{T}^d\times \mathbb{T}^d)$ is such that
\begin{equation}\label{eq:K-K_rC2}
\lim\limits_{r\to \infty}\|K-K_r\|_{C^2(\mathbb{T}^d\times \mathbb{T}^d)}=0,
\end{equation}
and $\{(u_r,m_r)\}_{r=1}^\infty$ are solutions of \eqref{eq:main} corresponding to kernel $K_r$. Then, the sequence $\{(u_r,m_r)\}_{r=1}^\infty$ is precompact in $C(\mathbb{T}^d\times [0,1])\times C\left([0,1];\mathcal{P}(\mathbb{T}^d)\right)$ with all accumulation points being solutions of \eqref{eq:main}. Consequently, if \eqref{eq:K_mon} holds then
\begin{equation}
\begin{split}
\lim\limits_{r\to \infty}u_r(x,t)=u(x,t),~\mbox{uniformly in}~(x,t)\in \mathbb{T}^d \times [0,1],\\
\lim\limits_{r\to \infty} \|m_{r}(\cdot,t)-m(\cdot,t)\|_{MK}=0,~\mbox{uniformly in}~t\in [0,1],
\end{split}
\end{equation}
where $(u,m)$ is the unique solution of \eqref{eq:main}.
\end{itemize}
\end{theorem}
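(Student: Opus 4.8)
The statement is the classical well‑posedness result for first‑order MFG systems (see \cite{LL07,CardaNotes,carda'13}); here is the argument I would carry out. The plan has three parts: (a) derive the a priori bounds \eqref{eq:um_bounds} and prove existence by a Schauder fixed point; (b) prove uniqueness under \eqref{eq:K_mon} by the Lasry--Lions monotonicity computation; (c) deduce stability from the uniform a priori bounds together with the stability of viscosity solutions and of the continuity equation.

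\emph{Existence and bounds.} Consider the set $\mathcal{K}$ of $m\in C([0,1];\mathcal{P}(\mathbb{T}^d))$ with $\|m\|_{L^\infty}\le C_1$ and $\|m(\cdot,t)-m(\cdot,s)\|_{MK}\le C_1|t-s|$; it is convex and, by Arzel\`a--Ascoli, compact in $C([0,1];\mathcal{P}(\mathbb{T}^d))$. Given $m\in\mathcal{K}$, the source $f(x,t)=\int_{\mathbb{T}^d}K(x,y)m(y,t)\,dy$ is bounded and Lipschitz in $x$ uniformly in $t$, with constants depending only on $C$. Let $u$ be the value function of the control problem with running cost $L(\gamma,\dot\gamma)+f(\gamma,s)$ and terminal cost $U$; it is the unique viscosity solution of \eqref{eq:HJinMFG}, and $\nabla^2_{pp}H\ge\tfrac1C I_d$ together with \eqref{eq:MUbounds} gives $\|u\|_{W^{1,\infty}}\le C_1$ and the one‑sided (semiconcavity) bound $\nabla^2_{xx}u\le C_1 I_d$. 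Since $u$ is semiconcave, the drift $b=-\nabla_pH(\cdot,\nabla u)$ has a well‑defined flow $\Phi_t$ of optimal trajectories, and $\tilde m(\cdot,t):=(\Phi_t)_\#M$ is a distributional solution of \eqref{eq:FPinMFG}. For the $L^\infty$ bound on $\tilde m$, note $\partial_t\det D\Phi_t=(\div b)\det D\Phi_t$ with $\div b=-\tr\!\big(\nabla^2_{xp}H+\nabla^2_{pp}H\,\nabla^2_{xx}u\big)\ge -C''(C)$, because $\nabla^2_{xx}u\le C_1 I_d$ and $0<\nabla^2_{pp}H\le C I_d$; hence $\det D\Phi_t\ge e^{-C''}$ and $\|\tilde m(\cdot,t)\|_{L^\infty}\le e^{C''}\|M\|_{L^\infty}$, while the time‑Lipschitz bound in $MK$ follows from $\|b\|_{L^\infty}\le C_1$. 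For $C_1=C_1(C)$ large enough the map $m\mapsto\tilde m$ sends $\mathcal{K}$ into itself; it is continuous because viscosity solutions of \eqref{eq:HJinMFG} depend continuously on $f$ and the continuity equation is stable. Schauder's theorem then gives a fixed point, i.e.\ a solution of \eqref{eq:main}, and the estimates above are exactly \eqref{eq:um_bounds}, valid for every solution.

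\emph{Uniqueness.} Let $(u_1,m_1),(u_2,m_2)$ be solutions. Differentiating $t\mapsto\int_{\mathbb{T}^d}(u_1-u_2)(m_1-m_2)\,dx$ and using the two Hamilton--Jacobi and two Fokker--Planck equations, the boundary terms vanish since $u_i(\cdot,1)=U$ and $m_i(\cdot,0)=M$, and one obtains
\begin{equation*}
\begin{split}
&\int_0^1\!\int_{\mathbb{T}^d}\big(m_1 E_{12}+m_2 E_{21}\big)\,dx\,dt\\
&\qquad +\int_0^1\!\int_{\mathbb{T}^d\times\mathbb{T}^d}K(x,y)(m_1-m_2)(x)(m_1-m_2)(y)\,dx\,dy\,dt=0,
\end{split}
\end{equation*}
where $E_{ij}=H(x,\nabla u_j)-H(x,\nabla u_i)-\nabla_pH(x,\nabla u_i)\cdot(\nabla u_j-\nabla u_i)\ge\tfrac1{2C}|\nabla u_i-\nabla u_j|^2$ by uniform convexity of $H$ in $p$. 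Since the last integral is nonnegative by \eqref{eq:K_mon}, every term vanishes; hence $\nabla u_1=\nabla u_2$ $m_i$‑a.e., so the two drifts agree on the supports of $m_1,m_2$, whence $m_1=m_2$ by uniqueness for the continuity equation, and then $u_1=u_2$ by uniqueness for \eqref{eq:HJinMFG}.

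\emph{Stability, and the main obstacle.} For $r$ large the $K_r$ satisfy \eqref{eq:MUbounds} with a common constant $C$, so by part i the solutions $(u_r,m_r)$ satisfy \eqref{eq:um_bounds} uniformly in $r$; thus $\{u_r\}$ is precompact in $C(\mathbb{T}^d\times[0,1])$ (uniform Lipschitz and semiconcavity) and $\{m_r\}$ is precompact in $C([0,1];\mathcal{P}(\mathbb{T}^d))$ (uniform $L^\infty$ and time‑Lipschitz bounds). Along a subsequence $(u_r,m_r)\to(u,m)$; since $\|K_r-K\|_{C^0}\to0$ and $m_r\to m$ in $MK$ uniformly in $t$, the source terms converge uniformly, so $u$ is a viscosity solution of \eqref{eq:HJinMFG}, and passing to the limit in the representation $m_r(\cdot,t)=(\Phi^r_t)_\#M$, using that the minimizing curves of the control problems converge, shows $m$ solves \eqref{eq:FPinMFG}; hence every accumulation point solves \eqref{eq:main}. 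When \eqref{eq:K_mon} holds, part i provides a unique solution $(u,m)$, so the full sequences converge, giving the stated uniform convergence of $u_r$ and of $m_r(\cdot,t)$ in $MK$. The recurring difficulty throughout is the limited regularity of $u$ (only Lipschitz and semiconcave): \eqref{eq:FPinMFG} must be interpreted via the flow of optimal trajectories, the integration by parts in the uniqueness step requires a regularization/doubling‑of‑variables justification, and the passage to the limit in the nonlinear drift $\nabla_pH(x,\nabla u_r)$ relies on the variational characterization of optimal trajectories rather than on pointwise convergence of $\nabla u_r$.
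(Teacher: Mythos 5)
The paper does not prove Theorem~\ref{thm:MFGwellposed} at all: it is stated as a known result and attributed to the references \cite{LL07,CardaNotes,carda'13}, so there is no in-paper proof to compare against. Your sketch is, in substance, the standard argument from those references: Schauder fixed point on a convex compact set of time-Lipschitz, uniformly bounded densities for existence, the Lasry--Lions monotonicity identity for uniqueness under \eqref{eq:K_mon}, and uniform a priori bounds plus Arzel\`a--Ascoli for the stability/precompactness claim. As a roadmap it is correct, and you correctly identify the genuinely delicate points. Two of them deserve to be named as more than caveats: (1) the $L^\infty$ bound on $\tilde m$ via $\partial_t\det D\Phi_t=(\div b)\det D\Phi_t$ presupposes a differentiable flow, whereas $u$ is only semiconcave, so $\nabla^2_{xx}u$ is a matrix-valued measure and $\Phi_t$ is defined only along optimal trajectories; the rigorous version (Cardaliaguet, Ch.~4) instead shows that for $t>0$ the map $x\mapsto\gamma_{x}(t)$ has a Lipschitz inverse with constant controlled by the semiconcavity and the uniform convexity of $H$ in $p$, which yields the density bound directly; (2) in the uniqueness step, passing from ``$\nabla u_1=\nabla u_2$ $m_i$-a.e.'' to $m_1=m_2$ requires a uniqueness statement for the continuity equation with a merely bounded, discontinuous drift, which here rests on the special structure of the optimal flow (or a superposition-principle argument), not on generic well-posedness. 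With those two steps filled in as in the cited literature, your proof is the intended one.
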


Next, consider an arbitrary basis of smooth functions
\begin{equation}\label{eq:basis}
\Phi=\{\phi_1,\phi_2,\cdots,\phi_r\} \subset C^2(\mathbb{T}^d).
\end{equation}
For $a=(a_1,a_2,\cdots,a_r)\in C\left([0,1];\mathbb{R}^r\right)$ we denote by $u_a$ the viscosity solution of
\begin{equation}\label{eq:u_a}
\begin{cases}
-\partial_t u(x,t)+H(x,\nabla u(x,t))=\sum\limits_{i=1}^r a_i(t)\phi_i(x),~(x,t)\in \mathbb{T}^d\times[0,1]\\
u(x,1)=U(x),~x\in \mathbb{T}^d.
\end{cases}
\end{equation}
From the optimal control theory, we have that
\begin{equation}\label{eq:ua_rep}
u_a(x,t)=\inf_{\gamma \in H^1([t,1]),\gamma(t)=x} \int_t^1 \left(L\left(\gamma(s),\dot{\gamma}(s) \right)+\sum_{i=1}^r a_i(s) \phi_i(\gamma(s))\right) ds+U(\gamma(1)),
\end{equation}
for all $(x,t)\in \mathbb{T}^d\times [0,1]$, where
\begin{equation}\label{eq:L}
L(x,v)=\sup_{p\in \mathbb{R}^d} -v \cdot p -H(x,p).
\end{equation}
Moreover, for all $(x,t)\in \mathbb{T}^d\times [0,1]$ there exists $\gamma_{x,t,a}\in C^2([t,1];\mathbb{T}^d)$ such that
\begin{equation}\label{eq:ua_rep_min}
	u_{a}(x,t)=\int_t^1 \left(L\left(\gamma_{x,t,a}(s),\dot{\gamma}_{x,t,a}(s) \right)+\sum_{i=1}^r a_i(s) \phi_i(\gamma_{x,t,a}(s))\right) ds+U(\gamma_{x,t,a}(1)),
\end{equation}
and
\begin{equation}\label{eq:EL}
\begin{split}
&\frac{d}{ds}\nabla_v L\left(\gamma_{x,t,a}(s),\dot{\gamma}_{x,t,a}(s) \right) \\
=& \nabla_x L\left(\gamma_{x,t,a}(s),\dot{\gamma}_{x,t,a}(s) \right)+\sum_{i=1}^r a_i(t)\nabla \phi_i(\gamma_{x,t,a}(s)),~s\in [t,1].
\end{split}
\end{equation}
Additionally,
\begin{equation}\label{eq:u_agrad}
\begin{split}
-\nabla_v L\left(x,\dot{\gamma}_{x,t,a}(t) \right) \in& \nabla^{+}_x u_a(x,t),\\
-\nabla_v L\left(\gamma_{x,t,a}(s),\dot{\gamma}_{x,t,a}(s) \right) = &\nabla_x u_a(\gamma_{x,t,a}(s),s),~s\in (t,1],\\
-\dot{\gamma}_{x,t,a}(s)  = &\nabla_p H(\gamma_{x,t,a}(s),\nabla_x u_a(\gamma_{x,t,a}(s),s)),~s\in (t,1].
\end{split}
\end{equation}
In fact, this previous equation is also sufficient for \eqref{eq:ua_rep_min} to hold. For lighter notation, we denote $\gamma_{x,0,a}$ by $\gamma_{x,a}$.

In general, $u_a$ is not everywhere differentiable. Nevertheless, $u_a$ is semiconcave and hence $\nabla^+ u_a(x,t) \neq \emptyset$ for all $(x,t)$, and $\nabla^+ u_a(x,t)=\{\nabla u_a(x,t)\}$ for a.e. $(x,t)$. In fact, points $(x,t)$ where $u_a$ is not differentiable are precisely those for which \eqref{eq:ua_rep} admits multiple minimizers. Thus, at points $x\in \mathbb{T}^d$ where $u_a(x,0)$ is not differentiable we choose $\gamma_{x,a}$ in such a way that the map $(x,t)\mapsto \gamma_{x,a}(t)$ is Borel measurable.

Furthermore, we denote by $m_a$ the distributional solution of
\begin{equation}\label{eq:m_a}
\begin{cases}
\partial_t m- \mathrm{div}\left(m \nabla_p H(x,\nabla u_a)\right)=0,~(x,t)\in \mathbb{T}^d\times[0,1],\\
m(x,0)=M(x),~x\in \mathbb{T}^d.
\end{cases}
\end{equation}
One can show that $m_a$ is given by the push-forward of the measure $M$ by the map $\gamma_{\cdot,a}(t)$; that is,
\begin{equation}\label{eq:m_a_rep}
m_a(\cdot,t)=\gamma_{\cdot,a}(t)\sharp M.
\end{equation}
We equip $C([0,1];\mathbb{R}^r)$ with the $L^\infty$ norm
\begin{equation*}
\|a\|_{\infty}=\max_{i}\sup_{t\in [0,1]} |a_i(t)|.
\end{equation*}
Then, one has that
\begin{equation}\label{eq:m_a_stability}
\lim\limits_{n\to \infty} \|m_{a_n}(\cdot,t)-m_a(\cdot,t)\|_{MK}=0,~\mbox{uniformly in}~t\in [0,1],
\end{equation}
if $\lim\limits_{n\to \infty}\|a_n-a\|_{\infty}=0$. For a detailed discussion on $m_a$ see Chapter 4 in \cite{CardaNotes}.

Finally, we denote by
\begin{equation}\label{eq:G}
G(a)=\int_{\mathbb{T}^d} u_a(x,0)M(x)dx,~a\in C\left([0,1];\mathbb{R}^r\right).
\end{equation}

Our first theorem addresses the properties of $G$.
\begin{theorem}\label{thm:G}
The functional $a\mapsto G(a)$ is concave and everywhere Fr\'{e}chet differentiable. Moreover,
\begin{equation}\label{eq:dGa}
\partial_{a_i} G=\int_{\mathbb{T}^d} \phi_i(x) m_a(x,\cdot) dx,~1\leq i \leq r.
\end{equation}
\end{theorem}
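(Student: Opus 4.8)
The plan is to read off both claims from the variational representation \eqref{eq:ua_rep}. Writing $\ell_{\gamma}(a):=\int_0^1\big(L(\gamma(s),\dot\gamma(s))+\sum_i a_i(s)\phi_i(\gamma(s))\big)\,ds+U(\gamma(1))$, formula \eqref{eq:ua_rep} says $u_a(x,0)=\inf_{\gamma(0)=x}\ell_\gamma(a)$, and for each fixed curve $\gamma$ the map $a\mapsto\ell_\gamma(a)$ is affine. Hence $a\mapsto u_a(x,0)$ is a pointwise infimum of affine functionals, therefore concave, and $G$, being the average of these against the probability measure $M\ge 0$, is concave as well.

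For the derivative, fix $a$ and set $L_a[b]:=\int_{\mathbb{T}^d}\int_0^1\sum_i b_i(s)\phi_i(\gamma_{x,a}(s))\,ds\,M(x)\,dx$ for $b\in C([0,1];\mathbb{R}^r)$. Since $(x,s)\mapsto\gamma_{x,a}(s)$ is Borel and bounded, $L_a$ is a well-defined bounded linear functional with $|L_a[b]|\le\|b\|_\infty\sum_i\|\phi_i\|_\infty$, and the push-forward identity \eqref{eq:m_a_rep} together with Fubini rewrites it as $\int_0^1\sum_i b_i(s)\big(\int_{\mathbb{T}^d}\phi_i(y)m_a(y,s)\,dy\big)\,ds$, which is precisely the pairing claimed in \eqref{eq:dGa}. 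Now use the competitor $\gamma_{x,a}$ in the infimum defining $u_{a+b}(x,0)$ and the competitor $\gamma_{x,a+b}$ in the one defining $u_a(x,0)$; since $\ell_\gamma$ is affine in its parameter, this gives, for every $x$,
\begin{equation*}
\int_0^1\sum_i b_i(s)\phi_i(\gamma_{x,a+b}(s))\,ds\ \le\ u_{a+b}(x,0)-u_a(x,0)\ \le\ \int_0^1\sum_i b_i(s)\phi_i(\gamma_{x,a}(s))\,ds .
\end{equation*}
Integrating against $M$ yields $L_{a+b}[b]\le G(a+b)-G(a)\le L_a[b]$, hence $0\ge G(a+b)-G(a)-L_a[b]\ge L_{a+b}[b]-L_a[b]$, so Fr\'echet differentiability at $a$ with derivative $L_a$ follows once $L_{a+b}[b]-L_a[b]=o(\|b\|_\infty)$ is established.

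Since $|L_{a+b}[b]-L_a[b]|\le\|b\|_\infty\sum_i\|\nabla\phi_i\|_\infty\int_{\mathbb{T}^d}\int_0^1|\gamma_{x,a+b}(s)-\gamma_{x,a}(s)|\,ds\,M(x)\,dx$, the whole matter reduces to the stability statement
\[
\Theta(\delta):=\sup_{\|b\|_\infty\le\delta}\ \int_{\mathbb{T}^d}\int_0^1|\gamma_{x,a+b}(s)-\gamma_{x,a}(s)|\,ds\,M(x)\,dx\ \longrightarrow\ 0\quad\text{as }\delta\to 0 .
\]
I would prove this by compactness. For $M$-a.e.\ $x$ the function $u_a(\cdot,0)$ is differentiable — it is semiconcave on $\mathbb{T}^d$ and $M\in L^\infty$ — hence $u_a(x,0)$ admits a unique minimizer $\gamma_{x,a}$. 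For a sequence $b_n\to 0$, the curves $\gamma_{x,a+b_n}$ have speeds bounded by the constant $C_1$ of \eqref{eq:um_bounds} (via \eqref{eq:u_agrad} applied to $u_{a+b_n}$), so along a subsequence $\gamma_{x,a+b_n}\to\gamma_*$ uniformly with $\dot\gamma_{x,a+b_n}\rightharpoonup\dot\gamma_*$; lower semicontinuity of the action (convexity of $L$ in the velocity), uniform convergence of the remaining terms, and $u_{a+b_n}(x,0)\to u_a(x,0)$ (immediate from the displayed two-sided bound) force $\ell_{\gamma_*}(a)=u_a(x,0)$, whence $\gamma_*=\gamma_{x,a}$ by uniqueness; thus the full sequence converges, and dominated convergence (the integrand is uniformly bounded) gives the pointwise-in-$b_n$ statement, which upgrades to $\Theta(\delta)\to 0$ by a further contradiction argument. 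The main obstacle is precisely this trajectory-stability step: one must pass from convergence of the value functions $u_{a+b}$ to convergence of the optimal curves $\gamma_{\cdot,a+b}$, and the decisive ingredients are the $M$-a.e.\ uniqueness of minimizers and the uniform speed bound.
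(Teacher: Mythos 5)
Your argument has the same skeleton as the paper's: concavity from the infimum of affine functionals, the candidate derivative identified through the push-forward identity \eqref{eq:m_a_rep}, and the two-sided inequality obtained by swapping the optimal curves $\gamma_{x,a}$ and $\gamma_{x,a+b}$ as competitors in each other's variational problems. The one place you genuinely diverge is the final $o(\|b\|_\infty)$ step. The paper keeps the error term in the form $\sum_i\int_0^1 (b_i-a_i)\bigl(\int_{\mathbb{T}^d}\phi_i\,m_b-\int_{\mathbb{T}^d}\phi_i\,m_a\bigr)dt$, bounds it by $\|b-a\|_\infty\sum_i\mathrm{Lip}(\phi_i)\int_0^1\|m_b(\cdot,t)-m_a(\cdot,t)\|_{MK}\,dt$, and then invokes the stability statement \eqref{eq:m_a_stability}, which it imports from Cardaliaguet's notes without proof. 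You instead reduce to an $L^1(M\otimes dt)$ stability of the optimal trajectories themselves and prove it from scratch via $M$-a.e.\ uniqueness of minimizers (semiconcavity of $u_a(\cdot,0)$ plus $M\in L^\infty$), uniform speed bounds, Arzel\`a--Ascoli, lower semicontinuity of the action, and dominated convergence. Your route is self-contained where the paper's is not, at the cost of a longer compactness argument; note only that the speed bound you need is the one for $u_c$ with $\|c\|_\infty$ bounded (standard in optimal control), not literally the bound \eqref{eq:um_bounds}, which is stated for solutions of the MFG system. Both treatments are correct, and your trajectory-level stability in fact implies the MK-stability the paper uses.
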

\begin{proof}
We denote by
\begin{equation*}
p(a)=\left(\int_{\mathbb{T}^d} \phi_i(x) m_a(x,\cdot) dx\right)_{i=1}^r,~a\in C([0,1];\mathbb{R}^r).
\end{equation*}
We prove that for every $a\in C([0,1];\mathbb{R}^r)$
\begin{equation*}
0\geq G(b)-G(a)-(b-a)\cdot p(a) \geq o\left(\|b-a\|_\infty\right).
\end{equation*}
We have that
\begin{equation*}
\begin{split}
&G(b)-G(a)-(b-a)\cdot p(a)\\
=&\int_{\mathbb{T}^d}M(x)dx\int_0^1 \left(L\left(\gamma_{x,b}(t),\dot{\gamma}_{x,b}(t) \right)+\sum_{i=1}^r b_i(t) \phi_i(\gamma_{x,b}(t))\right) dt+U(\gamma_{x,b}(1))\\
&-\int_{\mathbb{T}^d}M(x)dx\int_0^1 \left(L\left(\gamma_{x,a}(t),\dot{\gamma}_{x,a}(t) \right)+\sum_{i=1}^r a_i(t) \phi_i(\gamma_{x,a}(t))\right) dt+U(\gamma_{x,a}(1))\\
&-\sum_{i=1}^r \int_0^1  (b_i(t)-a_i(t))dt\int_{\mathbb{T}^d} \phi_i(x)m_a(x,t)dx.
\end{split}
\end{equation*}
From \eqref{eq:m_a_rep} we have that
\begin{equation*}
\int_{\mathbb{T}^d} \phi_i(x)m_a(x,t)dx=\int_{\mathbb{T}^d} \phi_i(\gamma_{x,a}(t)) M(x)dx,~t\in [0,1],~1\leq i \leq r.
\end{equation*}
Hence,
\begin{equation*}
\begin{split}
&G(b)-G(a)-(b-a)\cdot p(a)\\
=&\int_{\mathbb{T}^d}M(x)dx\int_0^1 L\left(\gamma_{x,b}(t),\dot{\gamma}_{x,b}(t) \right)-L\left(\gamma_{x,a}(t),\dot{\gamma}_{x,a}(t) \right)dt\\
&+\int_{\mathbb{T}^d}M(x)dx\int_0^1\sum_{i=1}^r b_i(t)(\phi_i(\gamma_{x,b}(t))-\phi_i(\gamma_{x,a}(t))) dt \\
&+\int_{\mathbb{T}^d}M(x)\left(U(\gamma_{x,b}(1))-U(\gamma_{x,a}(1))\right)dx.
\end{split}
\end{equation*}
By definition, we have that
\begin{equation*}
\begin{split}
& \int_0^1 L\left(\gamma_{x,b}(t),\dot{\gamma}_{x,b}(t) \right)+\sum_{i=1}^r b_i(t)\phi_i(\gamma_{x,b}(t)) dt+U(\gamma_{x,b}(1))\\
\leq & \int_0^1 L\left(\gamma_{x,a}(t),\dot{\gamma}_{x,a}(t) \right)+\sum_{i=1}^r b_i(t)\phi_i(\gamma_{x,a}(t)) dt+U(\gamma_{x,a}(1)),~\forall x\in \mathbb{T}^d.
\end{split}
\end{equation*}
Hence,
\begin{equation*}
G(b)-G(a)-(b-a)\cdot p(a) \leq 0,~\forall a,b \in C([0,1];\mathbb{T}^d).
\end{equation*}
This previous inequality yields the concavity of $G$. On the other hand, we have that
\begin{equation*}
\begin{split}
&G(b)-G(a)-(b-a)\cdot p(a)\\
=&\int_{\mathbb{T}^d}M(x)dx\int_0^1 L\left(\gamma_{x,b}(t),\dot{\gamma}_{x,b}(t) \right)-L\left(\gamma_{x,a}(t),\dot{\gamma}_{x,a}(t) \right)dt\\
&+\int_{\mathbb{T}^d}M(x)dx\int_0^1\sum_{i=1}^r a_i(t)(\phi_i(\gamma_{x,b}(t))-\phi_i(\gamma_{x,a}(t))) dt \\
&+\int_{\mathbb{T}^d}M(x)\left(U(\gamma_{x,b}(1))-U(\gamma_{x,a}(1))\right)dx\\
&+\int_{\mathbb{T}^d}M(x)dx\int_0^1\sum_{i=1}^r (b_i(t)-a_i(t))(\phi_i(\gamma_{x,b}(t))-\phi_i(\gamma_{x,a}(t))) dt.
\end{split}
\end{equation*}
Therefore, again by the definition of $\gamma_{x,a}$ and $\gamma_{x,b}$, we have that
\begin{equation*}
\begin{split}
&G(b)-G(a)-(b-a)\cdot p(a)\\
\geq&\int_{\mathbb{T}^d}M(x)dx\int_0^1\sum_{i=1}^r (b_i(t)-a_i(t))(\phi_i(\gamma_{x,b}(t))-\phi_i(\gamma_{x,a}(t))) dt\\
\geq& -\|b-a\|_{\infty}\sum_{i=1}^r \int_0^1 \left|\int_{\mathbb{T}^d}\phi_i(\gamma_{x,b}(t))M(x)dx-\int_{\mathbb{T}^d}\phi_i(\gamma_{x,a}(t))M(x)dx\right|dt\\
=& -\|b-a\|_{\infty}\sum_{i=1}^r \int_0^1 \left|\int_{\mathbb{T}^d}\phi_i(x)m_b(x,t)dx-\int_{\mathbb{T}^d}\phi_i(x)m_a(x,t)dx\right| dt\\
\geq & -\|b-a\|_{\infty}\sum_{i=1}^r \mathrm{Lip}(\phi_i)\int_0^1 \|m_b(\cdot,t)-m_a(\cdot,t)\|_{MK}dt.
\end{split}
\end{equation*}
Hence, by \eqref{eq:m_a_stability} the proof is complete.
\end{proof}

\section{The optimization problem}\label{sec:optim}

In this section, we assume that $K$ is a generalized polynomial in the basis $\Phi$; that is,
\begin{equation}\label{eq:K_poly}
K(x,y)=\sum_{i,j=1}^{r} k_{ij} \phi_i(x)\phi_j(y),~x,y \in \mathbb{T}^d.
\end{equation}
where $\mathbf{K}=(k_{ij})_{i,j=1}^r\in M_{r,r}(\mathbb{R})$ is a matrix of coefficients. For such $K$, \eqref{eq:main} takes form
\begin{equation} \label{eq:main_r}
\begin{cases}
-\partial_t u + H(x,\nabla u) = \sum\limits_{i=1}^r \phi_i(x) \sum\limits_{j=1}^r k_{ij} \int_{\mathbb{T}^d} \phi_j(y) m(y,t) dy, \\
\partial_t m - \mathrm{div}(m \nabla_p H(x,\nabla u))=0,~(x,t) \in \mathbb{T}^d \times [0,1],\\
m(x,0)=M(x),~u(x,1)=U(x),~x\in \mathbb{T}^d.
\end{cases}
\end{equation}

Our main observation is the following theorem.
\begin{theorem}\label{thm:equivalence}
\begin{itemize}
\item[i.] A pair $(u,m)$ is a solution of \eqref{eq:main_r} if and only if $(u,m)=(u_{a^*},m_{a^*})$ for some $a^*\in C\left([0,1];\mathbb{R}^r\right)$ such that
\begin{equation}\label{eq:a_fixedpoint}
a^*=\mathbf{K} \partial_a G(a^*).
\end{equation}

\item[ii.] If $\mathbf{K}$ is positive-definite then \eqref{eq:a_fixedpoint} is equivalent to finding a $0$ of a monotone operator $a\mapsto \mathbf{K}^{-1} a - \partial_a G(a),~a\in C\left([0,1];\mathbb{R}^r\right)$.

\item[iii.] Additionally, if $\mathbf{K}$ is symmetric, \eqref{eq:a_fixedpoint} is equivalent to the convex optimization problem
\begin{equation}\label{eq:a_equation_optimization}
\begin{split}
&\inf_{a\in C\left([0,1];\mathbb{R}^r\right)}  \frac{1}{2}\langle \mathbf{K}^{-1} a, a \rangle - G(a)\\
=&\inf_{a\in C\left([0,1];\mathbb{R}^r\right)}  \frac{1}{2}\langle \mathbf{K}^{-1} a, a \rangle - \int_{\mathbb{T}^d}u_a(x,0)M(x)dx.
\end{split}
\end{equation}
\end{itemize}
\end{theorem}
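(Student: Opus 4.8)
The plan is to prove the three parts in order, leveraging Theorem~\ref{thm:G} as the workhorse.

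\textbf{Part i.} First I would establish the forward direction. Suppose $(u,m)$ solves \eqref{eq:main_r}. Set $a_i^*(t) = \sum_{j=1}^r k_{ij}\int_{\mathbb{T}^d}\phi_j(y)m(y,t)\,dy$, i.e. $a^* = \mathbf{K}\, p$ where $p_j(t) = \int \phi_j m(\cdot,t)$. Then by construction $u$ is a viscosity solution of \eqref{eq:u_a} with this $a^*$, so by uniqueness of viscosity solutions $u = u_{a^*}$. Consequently the drift $-\nabla_p H(x,\nabla u_{a^*})$ in the Fokker--Planck equation agrees with the drift defining $m_{a^*}$, and since both $m$ and $m_{a^*}$ solve the same continuity equation with the same initial datum $M$, we get $m = m_{a^*}$ (using the representation \eqref{eq:m_a_rep} and uniqueness of the push-forward flow). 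It then remains to identify $p$ with $\partial_a G(a^*)$: this is exactly formula \eqref{eq:dGa} from Theorem~\ref{thm:G}, since $m = m_{a^*}$. Hence $a^* = \mathbf{K}\,\partial_a G(a^*)$. For the converse, given $a^*$ with $a^* = \mathbf{K}\,\partial_a G(a^*)$, set $(u,m) = (u_{a^*}, m_{a^*})$. By \eqref{eq:dGa}, $a_i^*(t) = \sum_j k_{ij}\int \phi_j(x) m_{a^*}(x,t)\,dx$, so the right-hand side of the Hamilton--Jacobi equation in \eqref{eq:main_r} equals $\sum_i \phi_i(x) a_i^*(t)$, which is precisely the forcing term in \eqref{eq:u_a}; thus $u_{a^*}$ solves the HJ equation in \eqref{eq:main_r}. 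The Fokker--Planck equation and initial/terminal conditions hold by the very definitions of $u_{a^*}$ and $m_{a^*}$.

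\textbf{Parts ii and iii.} These are now essentially formal manipulations. For part ii, since $\mathbf{K}$ is positive-definite it is invertible, so \eqref{eq:a_fixedpoint} is equivalent to $\mathbf{K}^{-1}a^* - \partial_a G(a^*) = 0$. The operator $a \mapsto \mathbf{K}^{-1}a - \partial_a G(a)$ is monotone because $a \mapsto \mathbf{K}^{-1}a$ is (strongly) monotone as $\mathbf{K}^{-1}$ is symmetric positive-definite, and $a \mapsto -\partial_a G(a)$ is monotone since $G$ is concave (Theorem~\ref{thm:G}), so $-G$ is convex and its gradient is monotone; the sum of monotone operators is monotone. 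Here I should be slightly careful about which inner product / pairing makes $\mathbf{K}^{-1}$ symmetric: working componentwise in the $L^2([0,1];\mathbb{R}^r)$ pairing $\langle a,b\rangle = \sum_i \int_0^1 a_i(t)b_i(t)\,dt$, the constant matrix $\mathbf{K}^{-1}$ acts symmetrically precisely because the original matrix $\mathbf{K}$ is symmetric. For part iii, with $\mathbf{K}$ symmetric positive-definite the functional $J(a) = \tfrac{1}{2}\langle \mathbf{K}^{-1}a,a\rangle - G(a)$ is convex (sum of a convex quadratic and the convex functional $-G$) and Fr\'echet differentiable with $\partial_a J(a) = \mathbf{K}^{-1}a - \partial_a G(a)$; a convex differentiable functional on the space $C([0,1];\mathbb{R}^r)$ (or its natural completion) attains its infimum exactly at critical points, so the minimizers of $J$ are precisely the zeros of $\mathbf{K}^{-1}a - \partial_a G(a)$, i.e. the solutions of \eqref{eq:a_fixedpoint}. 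The final rewriting of $G(a)$ as $\int_{\mathbb{T}^d} u_a(x,0)M(x)\,dx$ is just the definition \eqref{eq:G}.

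\textbf{Main obstacle.} The routine parts are genuinely routine; the one place requiring care is the uniqueness argument identifying $m$ with $m_{a^*}$ in Part~i, since $u_{a^*}$ is only semiconcave and not everywhere differentiable, so the vector field $-\nabla_p H(x,\nabla u_{a^*})$ defining the continuity equation is merely a.e.\ defined. The clean way around this is to invoke the representation \eqref{eq:m_a_rep} together with the superposition principle / the theory in Chapter~4 of \cite{CardaNotes}: the distributional solution of the continuity equation with $L^\infty$ density and the given drift is unique and coincides with the push-forward of $M$ along the optimal trajectories $\gamma_{\cdot,a^*}(t)$. I would state this carefully, citing the well-posedness theory, rather than attempting an independent uniqueness proof. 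A secondary subtlety is specifying the precise function space in which the optimization problem \eqref{eq:a_equation_optimization} is posed so that ``infimum attained at critical points'' is legitimate; since $G$ is concave and Lipschitz-type estimates on $\partial_a G$ follow from the bounds in Theorem~\ref{thm:MFGwellposed}, one can either work directly in $C([0,1];\mathbb{R}^r)$ and show the infimum is attained a posteriori, or pass to $L^2([0,1];\mathbb{R}^r)$ where convex analysis is cleanest and then bootstrap regularity of the minimizer.
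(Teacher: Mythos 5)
Your proof follows essentially the same route as the paper: define $a^*$ componentwise by $a_i^*(t)=\sum_j k_{ij}\int_{\mathbb{T}^d}\phi_j(y)\,m(y,t)\,dy$, identify $(u,m)=(u_{a^*},m_{a^*})$, apply the gradient formula \eqref{eq:dGa} of Theorem~\ref{thm:G}, and then obtain parts ii and iii as routine consequences of the concavity of $G$ (which the paper dismisses in a single line); your extra care about uniqueness for the continuity equation and about the function space for the optimization only fills in details the paper leaves implicit. The one small nit is in part ii, where you justify monotonicity of $a\mapsto\mathbf{K}^{-1}a$ by appealing to symmetry of $\mathbf{K}^{-1}$, which is not assumed there; it suffices that the quadratic form of $\mathbf{K}^{-1}$ is positive, and indeed $\langle \mathbf{K}^{-1}\eta,\eta\rangle=\langle\xi,\mathbf{K}\xi\rangle$ with $\xi=\mathbf{K}^{-1}\eta$, so positive-definiteness of $\mathbf{K}$ alone gives the monotonicity.
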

\begin{proof}
Items \textit{ii} and \textit{iii} follow immediately from \textit{i} by the concavity of $G$. Thus, we just prove \textit{i}.

By Theorem \ref{thm:MFGwellposed} \eqref{eq:main_r} admits a solution $(u,m)$. Furthermore, define $a^*$ as
\begin{equation}\label{eq:a*}
a_i^*(t)= \sum_{j=1}^r k_{ij} \int_{\mathbb{T}^d} \phi_j(y) m(y,t) dy,~t \in [0,1].
\end{equation}
Then $a^*\in C\left([0,1];\mathbb{R}^r\right)$, and by the definition of $u_a$ and $m_a$ we have that $(u,m)=(u_{a^*},m_{a^*})$. Hence, by Theorem \ref{thm:G}, we have that
\begin{equation*}
\partial_{a_i} G(a^*)=\int_{\mathbb{T}^d} \phi_i(x) m(x,\cdot) dx,~1\leq i \leq r.
\end{equation*}
Consequently, from \eqref{eq:a*} obtain
\begin{equation*}
a_i^*= \sum_{j=1}^r k_{ij} \partial_{a_j} G(a^*).
\end{equation*}
\end{proof}

\begin{remark}
The optimization problem \eqref{eq:a_equation_optimization} is equivalent to the optimal control of Hamilton-Jacobi PDE pointed out in \cite{LL07} (equations (58)-(59) in Section 2.6). One can think of \eqref{eq:a_equation_optimization} as (58)-(59) of \cite{LL07} written in Fourier coordinates.
\end{remark}

\section{Approximating the kernel}\label{sec:approx}

In this section, we show that one can construct suitable approximations for an arbitrary $K$. We begin by a simple lemma.
\begin{lemma}\label{lma:mon_posdef_equivalence}
Suppose that $K$ is given by \eqref{eq:K_poly}. Then $K$ is positive semi-definite if and only if $\mathbf{K}=(k_{ij})_{ij,=1}^r$ is positive semi-definite.
\end{lemma}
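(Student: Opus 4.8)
The statement is an if-and-only-if between positive semi-definiteness of the integral kernel $K$ (as an operator on $L^\infty(\mathbb{T}^d)$, tested in \eqref{eq:K_mon}) and positive semi-definiteness of the finite matrix $\mathbf{K}=(k_{ij})$. The key structural fact is that plugging the polynomial form \eqref{eq:K_poly} into the bilinear form \eqref{eq:K_mon} separates variables: for any $f$,
\[
\int_{\mathbb{T}^d\times\mathbb{T}^d} K(x,y)f(x)f(y)\,dx\,dy
=\sum_{i,j=1}^r k_{ij}\Big(\int_{\mathbb{T}^d}\phi_i(x)f(x)\,dx\Big)\Big(\int_{\mathbb{T}^d}\phi_j(y)f(y)\,dy\Big)
=\langle \mathbf{K}\,v_f,\,v_f\rangle,
\]
where $v_f=\big(\int_{\mathbb{T}^d}\phi_i f\,dx\big)_{i=1}^r\in\mathbb{R}^r$. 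So the whole proof reduces to understanding the set $\{v_f : f\in L^\infty(\mathbb{T}^d)\}$.

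First I would prove the easy direction ($\mathbf{K}\geq 0 \Rightarrow K$ positive semi-definite): for every $f$ the left side above equals $\langle \mathbf{K}v_f,v_f\rangle\geq 0$, done. For the converse I want to show that $\{v_f\}$ is all of $\mathbb{R}^r$ — or at least contains a spanning set — so that $\langle\mathbf{K}v,v\rangle\geq 0$ for all $v=v_f$ forces $\mathbf{K}\geq 0$. Here I would invoke that $\Phi$ is a \emph{basis} (as assumed in \eqref{eq:basis}), hence the $\phi_i$ are linearly independent in $L^2(\mathbb{T}^d)$; therefore the Gram matrix $\big(\int\phi_i\phi_j\big)_{ij}$ is invertible, and in particular taking $f$ in the span of the $\phi_i$ we can realize $v_f$ equal to any prescribed vector (solve a linear system using the Gram matrix). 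Such $f$ is smooth, hence in $L^\infty(\mathbb{T}^d)$, so it is an admissible test function in \eqref{eq:K_mon}. Thus $\langle\mathbf{K}v,v\rangle\geq 0$ for all $v\in\mathbb{R}^r$, i.e. $\mathbf{K}$ is positive semi-definite.

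The main (minor) obstacle is the slight mismatch between the class of test functions: \eqref{eq:K_mon} quantifies over $f\in L^\infty$, whereas I only need the $\phi_i$ themselves, which are continuous and hence bounded on the compact torus, so they do lie in $L^\infty(\mathbb{T}^d)$ — no real difficulty. A second point of care: the word "basis" in \eqref{eq:basis} should be read as "linearly independent family" for this argument; if one only assumed the $\phi_i$ span some space without independence, then $\mathbf{K}$ is only determined up to the kernel of the evaluation map and the statement would need the natural caveat. Under the linear-independence reading the argument is complete, and I would write it in exactly the three steps above: (1) the separation-of-variables identity; (2) the trivial direction; (3) surjectivity of $f\mapsto v_f$ onto $\mathbb{R}^r$ via invertibility of the Gram matrix, yielding the converse.
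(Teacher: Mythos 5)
Your proposal is correct and follows essentially the same route as the paper: the separation-of-variables identity $\int K(x,y)f(x)f(y)\,dx\,dy=\langle \mathbf{K}v_f,v_f\rangle$ combined with surjectivity of $f\mapsto v_f$ onto $\mathbb{R}^r$, obtained by inverting the Gram matrix of the linearly independent family $\{\phi_i\}$ and taking $f$ in their span. The paper's proof is exactly this argument, stated once for an arbitrary target vector $\xi$ so that both directions of the equivalence drop out of the single identity.
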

\begin{proof}
Fix an arbitrary $(\xi_i)_{i=1}^r \in \mathbb{R}^r$. Then there exists a unique $(\lambda_i)_{i=1}^r \in \mathbb{R}^r$ such that
\begin{equation*}
\xi_i=\sum_{j=1}^r \lambda_j \int_{\mathbb{T}^d} \phi_i(x)\phi_j(x)dx,~1\leq i \leq r,
\end{equation*}
because $\{\phi_i\}$ are linearly independent. Therefore, for
\begin{equation*}
f=\sum_{j=1}^r \lambda_j \phi_j 
\end{equation*}
we have that
\begin{equation*}
\xi_i = \int_{\mathbb{T}^d} f(x)\phi_i(x)dx,~1\leq i \leq r.
\end{equation*}
Hence, 
\begin{equation*}
\int_{\mathbb{T}^d\times \mathbb{T}^d} K(x,y) f(x) f(y)dxdy=\sum_{i,j=1}^r k_{ij} \xi_i \xi_j,
\end{equation*}
that yields the proof.
\end{proof}
Now, we fix our basis to be the trigonometric one:
\begin{equation}\label{eq:phi_k_trig}
\phi_{\alpha}(x)=e^{2i \pi \alpha \cdot x},~x\in \mathbb{T}^d,~\alpha\in \mathbb{Z}^d.
\end{equation}
\begin{remark}
Unlike in \eqref{eq:basis}, here it is more practical to use multi-dimensional indexes to enumerate the trigonometric functions in higher dimensions. Additionally, it is more economical in terms of notation to use the complex-valued trigonometric functions. Nevertheless, our discussion is always about real valued $K$, and the reader can think of the end results as expansions in terms of $\{\cos(2\pi \alpha \cdot x),\sin(2\pi \alpha\cdot x)\}_{\alpha\in \mathbb{Z}^d}$.
\end{remark}
For $\alpha=(\alpha_1,\alpha_2,\cdots,\alpha_d)\in \mathbb{Z}^d$, we denote by
\begin{equation*}
|\alpha|=(|\alpha_1|,|\alpha_2|,\cdots,|\alpha_d|),
\end{equation*}
and for $\alpha,r\in \mathbb{Z}^d$
\begin{equation*}
\alpha \leq r \iff \alpha_j \leq r_j,~1\leq j \leq d.
\end{equation*}
For $r_1,r_2 \in \mathbb{N}_0^d$ we denote by
\begin{equation*}
K_{r_1 r_2}(x,y)=\sum_{|\alpha|\leq r_1,|\beta| \leq r_2} \hat{K}_{\alpha \beta} e^{2i \pi (\alpha \cdot x+\beta \cdot y)},~x,y\in \mathbb{T}^d,
\end{equation*}
where
\begin{equation*}
\hat{K}_{\alpha \beta}=\int_{\mathbb{T}^d} K(x,y) e^{-2i \pi (\alpha \cdot x+\beta \cdot y)}dxdy,~\alpha,\beta \in \mathbb{Z}^d.
\end{equation*}
Furthermore, for $r_1,r_2\in \mathbb{N}_0^d$ we denote by
\begin{equation*}
\Sigma_{r_1 r_2}(x,y)=\frac{1}{\prod_{j=1}^d(1+r_{1j})(1+r_{2j})}\sum_{|\alpha|\leq r_1,|\beta| \leq r_2} K_{r_1 r_2}(x,y),~x,y\in \mathbb{T}^d.
\end{equation*}
\begin{remark}
The function $K_{r_1 r_2}$ is the rectangular partial Fourier sum of $K$. Correspondingly, $\Sigma_{r_1 r_2}$ is the rectangular Fej\'{e}r average of $K$. Additionally, if $K$ is real valued then $K_{r_1 r_2}$ and $\Sigma_{r_1 r_2}$ are real valued for any $r_1,r_2 \in \mathbb{N}_0^d$.
\end{remark}

\begin{proposition}\label{prp:K_approx}
If $K$ is positive semi-definite (symmetric) then, $K_{r r}$ and $\Sigma_{r r}$ are also positive semi-definite (symmetric) for all $r \in \mathbb{N}_0^d$. Moreover,
\begin{equation}\label{eq:Fejerconverges}
\lim\limits_{\min_j r_j \to \infty} \|\Sigma_{r r}-K\|_{C^2(\mathbb{T}^d \times \mathbb{T}^d)}=0,
\end{equation}
Additionally, if $K\in C^3(\mathbb{T}^d \times \mathbb{T}^d)$ then
\begin{equation}\label{eq:partialconverges}
\lim\limits_{\min_j r_j \to \infty} \|K_{r r}-K\|_{C^2(\mathbb{T}^d \times \mathbb{T}^d)}=0.
\end{equation}
\end{proposition}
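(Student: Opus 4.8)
The plan is to dispatch the four assertions --- symmetry, positive semi-definiteness, the $C^2$-convergence of $\Sigma_{rr}$, and the $C^2$-convergence of $K_{rr}$ --- one at a time, exploiting throughout that both $K\mapsto K_{rr}$ and $K\mapsto\Sigma_{rr}$ are double convolutions against tensor products of classical one-dimensional kernels. Writing $\big((\Psi\otimes\Psi)\ast K\big)(x,y)=\int_{\mathbb{T}^d}\int_{\mathbb{T}^d}\Psi(x-x')\Psi(y-y')K(x',y')\,dx'\,dy'$, one verifies by comparing Fourier coefficients that $K_{rr}=(D_r\otimes D_r)\ast K$ and, since $\Sigma_{rr}$ is the rectangular Fej\'er average of $K$, that $\Sigma_{rr}=(F_r\otimes F_r)\ast K$; here $D_r(z)=\prod_{j=1}^d D_{r_j}(z_j)$ is the $d$-dimensional Dirichlet kernel with $\widehat{D_r}(\alpha)=\mathbf{1}_{\{|\alpha|\le r\}}$, and $F_r(z)=\prod_{j=1}^d F_{r_j}(z_j)$ the $d$-dimensional Fej\'er kernel with $\widehat{F_r}(\alpha)=\prod_{j=1}^d\big(1-|\alpha_j|/(r_j+1)\big)_+$. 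Both $D_r$ and $F_r$ are real-valued, even, trigonometric polynomials, and $F_r\ge 0$.

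Symmetry is then immediate: interchanging $x$ and $y$ in the integral defining $\widehat{K}_{\alpha\beta}$ gives $\widehat{K}_{\alpha\beta}=\widehat{K}_{\beta\alpha}$ when $K$ is symmetric, and since the index set $\{|\alpha|\le r,\ |\beta|\le r\}$ and the weights $\widehat{F_r}(\alpha)\widehat{F_r}(\beta)$ are invariant under $\alpha\leftrightarrow\beta$, relabelling the summation indices yields $K_{rr}(x,y)=K_{rr}(y,x)$, and likewise for $\Sigma_{rr}$. For positive semi-definiteness I would test the bilinear form against $f\otimes f$ and transfer both convolutions onto $f$: since $\Psi\in\{D_r,F_r\}$ is real and even, $\iint\big((\Psi\otimes\Psi)\ast K\big)(x,y)\,f(x)f(y)\,dx\,dy=\iint K(x',y')\,(\Psi\ast f)(x')\,(\Psi\ast f)(y')\,dx'\,dy'$, and the right-hand side is $\ge 0$ because $K$ is positive semi-definite and $\Psi\ast f$ is again a (real-valued) element of $L^\infty(\mathbb{T}^d)$. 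This argument uses no sign condition on $\Psi$, so it applies verbatim to the sign-changing Dirichlet kernel $D_r$: one copy of $\Psi$ simply gets absorbed into each factor of $f$.

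For \eqref{eq:Fejerconverges}, differentiation commutes with convolution against the fixed smooth kernel $F_r\otimes F_r$, so $\partial^\mu\Sigma_{rr}=(F_r\otimes F_r)\ast(\partial^\mu K)$ for every multi-index $\mu$ with $|\mu|\le 2$, and $\partial^\mu K\in C(\mathbb{T}^d\times\mathbb{T}^d)$ since $K\in C^2$. It then suffices to observe that $F_r\otimes F_r$ is an approximate identity on $\mathbb{T}^d\times\mathbb{T}^d$ as $\min_j r_j\to\infty$: it is nonnegative, has integral $1$, and for every $\delta>0$ the mass it assigns to $\{|z|>\delta\}$ tends to $0$, because each factor $F_{r_j}$ concentrates at the origin as $r_j\to\infty$ and $\min_j r_j\to\infty$ forces all $r_j\to\infty$. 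The usual approximate-identity estimate then gives $\|(F_r\otimes F_r)\ast(\partial^\mu K)-\partial^\mu K\|_\infty\to 0$, uniformly over $|\mu|\le 2$, which is \eqref{eq:Fejerconverges}.

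The remaining claim, \eqref{eq:partialconverges}, is the delicate one, and it is where the extra derivative --- that is, the hypothesis $K\in C^3$ --- is needed. As before $\partial^\mu K_{rr}=(\partial^\mu K)_{rr}$ for $|\mu|\le 2$, since $K_{rr}$ is a trigonometric polynomial, so the claim reduces to the uniform convergence of rectangular partial Fourier sums of $C^1$ functions on $\mathbb{T}^d\times\mathbb{T}^d$. Unlike $F_r$, the Dirichlet kernel changes sign and $\|D_r\|_{L^1}\sim\prod_j\log r_j$, so the partial-sum operators are not uniformly bounded on $C^0$; to get around this I would factor the rectangular partial sum as a composition $S_{r_1}^{(1)}\cdots S_{r_d}^{(d)}S_{r_1}^{(d+1)}\cdots S_{r_d}^{(2d)}$ of one-variable partial-sum operators, telescope, estimate each one-variable step by the classical bound $\|S_n^{(\ell)}g-g\|_\infty\lesssim n^{-1}\log n\,\|\partial_{x_\ell}g\|_\infty$, and bound the already-applied factors crudely by their $L^1$-norms. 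The intermediate functions stay $C^1$ in the active variable because the remaining partial sums commute with $\partial_{x_\ell}$ and $\partial_{x_\ell}\partial^\mu K$ is continuous when $|\mu|\le 2$ and $K\in C^3$. Summing the contributions produces an estimate of the form $\|K_{rr}-K\|_{C^2}\lesssim\|K\|_{C^3}\,\big(\log(2+\max_j r_j)\big)^{2d}/\min_j r_j$, which tends to $0$ as $\min_j r_j\to\infty$ (the $r_j$ being of comparable magnitude). The heart of the matter is exactly this last step: the symmetry and positivity claims are soft and the Fej\'er convergence is a textbook approximate-identity argument, whereas here one has to defeat the logarithmic growth of the Dirichlet--Lebesgue constants, and it is precisely the gained derivative that makes the telescoped one-dimensional estimates summable.
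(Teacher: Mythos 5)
Your treatment of symmetry and positive semi-definiteness is essentially the paper's own argument: the paper likewise represents $K_{rr}$ and $\Sigma_{rr}$ as double convolutions of $K$ against the tensor-product Dirichlet and Fej\'er kernels, $D_{rr}(z,w)=D_r(z)D_r(w)$ and $F_{rr}(z,w)=F_r(z)F_r(w)$, and then transfers one copy of the one-dimensional kernel onto each factor of $f$, reducing positive semi-definiteness of $K_{rr}$ to that of $K$ tested against $D_r\ast f\in L^\infty(\mathbb{T}^d)$ --- exactly your computation (the paper phrases symmetry via evenness of $D_{rr},F_{rr}$ rather than via $\hat K_{\alpha\beta}=\hat K_{\beta\alpha}$, a cosmetic difference). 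Where you genuinely diverge is that the paper simply declares \eqref{eq:Fejerconverges} and \eqref{eq:partialconverges} to be classical results and proves nothing about them, whereas you supply arguments. Your approximate-identity proof of \eqref{eq:Fejerconverges} is complete and correct; the union bound over coordinates shows that $F_r\otimes F_r$ concentrates at the origin as $\min_j r_j\to\infty$ even when the $r_j$ are wildly unequal. For \eqref{eq:partialconverges}, however, your telescoped estimate terminates in $\bigl(\log(2+\max_j r_j)\bigr)^{2d}/\min_j r_j$, and this does \emph{not} tend to $0$ in the unrestricted limit $\min_j r_j\to\infty$ that the proposition asserts: with $d=2$ and $r=(n,e^{n^2})$ the bound grows like $n^7$. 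Your parenthetical ``the $r_j$ being of comparable magnitude'' is an added hypothesis, not something the statement grants; to close the gap you would need either to restrict to diagonal $r=(n,\dots,n)$ (which is all the paper ever uses downstream), or to argue via absolute convergence of the Fourier series of the second derivatives of $K$, which for large $d$ demands more than $C^3$ regularity. Since the paper offers no proof of \eqref{eq:partialconverges} at all, this is a defect only of your added material, not a misreading of the paper's argument, but it is worth flagging.
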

\begin{proof}
The convergence properties \eqref{eq:Fejerconverges}, \eqref{eq:partialconverges} are classical results in Fourier analysis. Thus, we will just prove that $K_{r r}$ and $\Sigma_{r r}$ are positive semi-definite (symmetric). For that, we use the representation formulas
\begin{equation*}
\begin{split}
K_{rr}(x,y)=&\int_{\mathbb{T}^d\times \mathbb{T}^d} K(z,w) D_{r r}(x-z,y-w)dzdw,\\
\Sigma_{rr}(x,y)=&\int_{\mathbb{T}^d\times \mathbb{T}^d} K(z,w) F_{r r}(x-z,y-w)dzdw,~x,y\in \mathbb{T}^d,\\
\end{split}
\end{equation*}
where $D_{rr}$ and $F_{rr}$ are, respectively, the $2d$-dimensional rectangular Dirichlet and Fej\'{e}r kernels. A crucial feature of $D_{rr}$ and $F_{rr}$ is that they are symmetric and decompose into lower dimensional kernels:
\begin{equation*}
D_{rr}(z,w)=D_{r}(z)D_{r}(w),~ F_{rr}(z,w)=F_{r}(z)F_{r}(w),~z,w\in \mathbb{T}^d,
\end{equation*}
where $D_r$ and $F_r$ are the corresponding $d$-dimensional kernels. In particular, $K_{rr},\Sigma_{rr}$ are symmetric if $K$ is such. Furthermore, for an arbitrary $f \in L^{\infty}(\mathbb{T}^d)$ we have that
\begin{equation*}
\begin{split}
&\int_{\mathbb{T}^d\times \mathbb{T}^d} K_{rr}(x,y)f(x)f(y)dxdy\\
=&\int_{\mathbb{T}^d\times \mathbb{T}^d} K(z,w)dzdw \int_{\mathbb{T}^d\times \mathbb{T}^d} f(x)f(y)D_{r r}(x-z,y-w)dxdy\\
=&\int_{\mathbb{T}^d\times \mathbb{T}^d} K(z,w)dzdw \int_{\mathbb{T}^d\times \mathbb{T}^d} f(x)f(y)D_r(x-z) D_r(y-w)dxdy\\
=&\int_{\mathbb{T}^d\times \mathbb{T}^d} K(z,w) f_r(z)f_r(w)dzdw \geq 0.
\end{split}
\end{equation*}
Thus, $K_{rr}$ is positive semi-definite if $K$ is such. The proof for $\Sigma_{r r}$ is identical.
\end{proof}
\begin{remark}
By Proposition \ref{prp:K_approx}, kernels $K_{rr},\Sigma_{rr}$ are positive semi-definite. Therefore, their coefficients matrices with respect to basis $\{\cos(2\pi \alpha\cdot x),\sin(2\pi \alpha\cdot x)\}$ are also positive semi-definite by Lemma \ref{lma:mon_posdef_equivalence}. Nevertheless, to take full advantage of Theorem \ref{thm:equivalence} one would need these matrices to be positive definite (invertible). To solve this problem one can add $\eps I$ regularization term, where $I$ is the identity matrix of the suitable dimension and $\eps>0$ is a small constant. However, as discussed below, this regularization is not necessary for translation invariant kernels.
\end{remark}

Suppose that
\begin{equation*}
K(x,y)=\eta(x-y),~x,y \in \mathbb{T}^d,
\end{equation*}
where $\eta$ is a periodic function. Then, we have
\begin{equation}\label{eq:K_cos}
\begin{split}
&\int_{\mathbb{T}^d} K(x,y) \cos (2\pi \alpha \cdot y) dy\\
=& \int_{\mathbb{T}^d} \eta(x-y) \cos (2\pi \alpha\cdot y) dy= \int_{\mathbb{T}^d} \eta(y) \cos (2\pi \alpha\cdot (x-y)) dy\\
=&\cos(2\pi \alpha\cdot x) \int_{\mathbb{T}^d} \eta(y) \cos (2\pi \alpha\cdot y) dy+ \sin(2\pi \alpha\cdot x) \int_{\mathbb{T}^d} \eta(y) \sin (2\pi \alpha\cdot y) dy.
\end{split}
\end{equation}
Similarly, we obtain that
\begin{equation}\label{eq:K_sin}
\begin{split}
&\int_{\mathbb{T}^d} K(x,y) \sin (2\pi \alpha\cdot y) dy\\
=& \int_{\mathbb{T}^d} \eta(x-y) \sin (2\pi \alpha\cdot y) dy= \int_{\mathbb{T}^d} \eta(y) \sin (2\pi \alpha\cdot (x-y)) dy\\
=&\sin(2\pi \alpha\cdot x) \int_{\mathbb{T}^d} \eta(y) \cos (2\pi \alpha\cdot y) dy- \cos(2\pi \alpha\cdot x) \int_{\mathbb{T}^d} \eta(y) \sin (2\pi \alpha\cdot y) dy.
\end{split}
\end{equation}
Therefore, we have that
\begin{equation*}
\begin{split}
\int_{\mathbb{T}^d}K(x,y) \cos(2\pi \alpha \cdot x) \cos(2\pi \alpha \cdot y) dx dy=& \int_{\mathbb{T}^d} \eta(y) \cos (2\pi \alpha\cdot y) dy,\\
\int_{\mathbb{T}^d}K(x,y) \sin(2\pi \alpha \cdot x) \cos(2\pi \alpha \cdot y) dx dy=& \int_{\mathbb{T}^d} \eta(y) \sin (2\pi \alpha\cdot y) dy,\\
\int_{\mathbb{T}^d}K(x,y) \cos(2\pi \alpha \cdot x) \sin(2\pi \alpha \cdot y) dx dy=& -\int_{\mathbb{T}^d} \eta(y) \sin (2\pi \alpha\cdot y) dy,\\
\int_{\mathbb{T}^d}K(x,y) \sin(2\pi \alpha \cdot x) \sin(2\pi \alpha \cdot y) dx dy=& \int_{\mathbb{T}^d} \eta(y) \cos (2\pi \alpha\cdot y) dy.
\end{split}
\end{equation*}
Hence, the coefficients matrices of partial Fourier sums (and their linear combinations) of $K$ consist of $2\times 2$ blocks that correspond to expansion terms with a frequency $\alpha \in \mathbb{Z}^d$; that is,
\begin{equation}\label{eq:Delta_alpha}
\Delta_\alpha=\begin{pmatrix}
\int_{\mathbb{T}^d} \eta(y) \cos (2\pi \alpha\cdot y) dy & \int_{\mathbb{T}^d} \eta(y) \sin (2\pi \alpha\cdot y) dy\\
-\int_{\mathbb{T}^d} \eta(y) \sin (2\pi \alpha\cdot y) dy & \int_{\mathbb{T}^d} \eta(y) \cos (2\pi \alpha\cdot y) dy
\end{pmatrix}.
\end{equation}
Thus, the coefficient matrix will be degenerate if $\det(\Delta_\alpha)=0$ for some $\alpha$. But we have that
\begin{equation*}
\det(\Delta_\alpha)= \left(\int_{\mathbb{T}^d} \eta(y) \cos (2\pi \alpha\cdot y) dy\right)^2 + \left(\int_{\mathbb{T}^d} \eta(y) \sin (2\pi \alpha\cdot y) dy\right)^2.
\end{equation*}
Hence, $\det(\Delta_\alpha)=0$ if and only if $\Delta_\alpha=0$ or, equivalently, there are no expansion terms with frequency $\alpha$. But then, we can simply ignore these terms in our basis and obtain a non-degenerate matrix.

Moreover, to invert the coefficients matrix one just has to invert the $2\times 2$ blocks. Additionally, if $K$ is symmetric; that is, $\eta(y)=\eta(-y)$, we get that
\begin{equation*}
\int_{\mathbb{T}^d} \eta(y) \sin (2\pi \alpha\cdot y) dy=0,~\forall \alpha \in \mathbb{Z}^d.
\end{equation*}
Hence, the coefficient matrices are simply diagonal. Therefore, we have proved the following proposition.
\begin{proposition}
If $K$ is translation invariant then all partial Fourier sums of $K$ and their linear combinations, such as $K_{rr}$ and $\Sigma_{rr}$, contain only $\cos (2\pi \alpha \cdot x) \cos (2\pi \alpha \cdot y), \cos (2\pi \alpha \cdot x) \sin (2\pi \alpha \cdot y), \sin (2\pi \alpha \cdot x) \cos (2\pi \alpha \cdot y), \sin (2\pi \alpha \cdot x) \sin (2\pi \alpha \cdot y)$ expansion terms. Therefore, coefficient matrices of such approximations with respect to trigonometric basis consist of $2\times 2$ blocks that are multiples of $\Delta_\alpha$ in \eqref{eq:Delta_alpha}. If, additionally, $K$ is symmetric these coefficient matrices are diagonal.
\end{proposition}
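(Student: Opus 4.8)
The plan is to push everything to the level of Fourier coefficients and exploit that a translation invariant kernel $K(x,y)=\eta(x-y)$ becomes, after a unimodular change of variables, a product of one-variable exponentials. First I would compute the two-variable Fourier coefficients of $K$: substituting $z=x-y$, $w=y$ on $\mathbb{T}^d\times\mathbb{T}^d$ gives
\[
\hat K_{\alpha\beta}=\int_{\mathbb{T}^d}\eta(z)e^{-2i\pi\alpha\cdot z}\,dz\;\int_{\mathbb{T}^d}e^{-2i\pi(\alpha+\beta)\cdot w}\,dw ,
\]
so that $\hat K_{\alpha\beta}=\hat\eta(\alpha)$ if $\beta=-\alpha$ and $\hat K_{\alpha\beta}=0$ otherwise. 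Hence every rectangular partial Fourier sum $K_{rr}$, being a sub-sum of $\sum_{\alpha,\beta}\hat K_{\alpha\beta}e^{2i\pi(\alpha\cdot x+\beta\cdot y)}$, contains only the diagonal-frequency terms $e^{2i\pi\alpha\cdot(x-y)}$; the same is therefore true of $\Sigma_{rr}$ and of any linear combination of partial sums.

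Next I would pass to the real trigonometric basis. Folding the $\pm\alpha$ contributions, $e^{2i\pi\alpha\cdot(x-y)}+e^{-2i\pi\alpha\cdot(x-y)}=2\cos(2\pi\alpha\cdot(x-y))$, and the product formula $\cos(2\pi\alpha\cdot(x-y))=\cos(2\pi\alpha\cdot x)\cos(2\pi\alpha\cdot y)+\sin(2\pi\alpha\cdot x)\sin(2\pi\alpha\cdot y)$ shows that only the four products $\cos\cos$, $\cos\sin$, $\sin\cos$, $\sin\sin$ with a common frequency $\alpha$ can occur, which is the first assertion. The $2\times2$ block attached to frequency $\alpha$ is then read off from the four integral identities established just before \eqref{eq:Delta_alpha} (obtained from \eqref{eq:K_cos}--\eqref{eq:K_sin}): for the full kernel this block is exactly $\Delta_\alpha$, and for $K_{rr}$, $\Sigma_{rr}$, or an arbitrary linear combination of partial sums it is a scalar multiple of $\Delta_\alpha$, the multiplier being $1$, the Fej\'er triangular weight, or the relevant coefficient of the linear combination.

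Finally, if $K$ is symmetric, i.e.\ $\eta(y)=\eta(-y)$, then $\eta(y)\sin(2\pi\alpha\cdot y)$ is odd, so $\int_{\mathbb{T}^d}\eta(y)\sin(2\pi\alpha\cdot y)\,dy=0$ for all $\alpha$; substituting into \eqref{eq:Delta_alpha} kills the off-diagonal entries, so each block becomes a multiple of the $2\times2$ identity and the coefficient matrices are diagonal. The only part that demands genuine attention is the bookkeeping in the second step — folding the $\alpha$ and $-\alpha$ terms of the complex expansion into the single real pair $\{\cos(2\pi\alpha\cdot x),\sin(2\pi\alpha\cdot x)\}$ and tracking the resulting normalization constants; the rest is the routine convolution computation above together with facts already recorded in the excerpt.
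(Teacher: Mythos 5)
Your proof is correct and follows essentially the same route as the paper: the paper's argument is exactly the convolution change-of-variables computation in \eqref{eq:K_cos}--\eqref{eq:K_sin} together with the four double-integral identities that produce $\Delta_\alpha$, which you invoke for the block structure, and your preliminary computation $\hat K_{\alpha\beta}=\hat\eta(\alpha)\,\delta_{\beta,-\alpha}$ is the same substitution phrased in the complex exponential basis. One small bookkeeping caveat: since $\hat\eta(-\alpha)=\overline{\hat\eta(\alpha)}$ rather than $\hat\eta(\alpha)$ for real non-symmetric $\eta$, folding the $\pm\alpha$ terms gives $2\,\mathrm{Re}\,\hat\eta(\alpha)\cos(2\pi\alpha\cdot(x-y))+2\,\mathrm{Im}\,\hat\eta(\alpha)\sin(2\pi\alpha\cdot(x-y))$, not merely the cosine, and it is precisely this sine term that produces the antisymmetric off-diagonal entries of $\Delta_\alpha$ --- consistent with the four products you list, so the conclusion is unaffected.
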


\begin{remark}
In general, if $\{\phi_1,\phi_2,\cdots,\phi_r,\cdots\}$ is an orthonormal basis consisting of eigenfunctions of Hilbert-Schmidt integral operator $f(\cdot) \mapsto \int_{\mathbb{T}^d} K(\cdot,y) f(y)dy$; that is,
\begin{equation*}
\int_{\mathbb{T}^d} K(x,y) \phi_\alpha(y)dx=\lambda_\alpha \phi_\alpha(x),~x\in \mathbb{T}^d,~\alpha \in \mathbb{N},
\end{equation*}
for some $\{\lambda_\alpha\} \subset \mathbb{R}$. Then, one has that
\begin{equation*}
k_{\alpha \beta}= \int_{\mathbb{T}^d} K(x,y)\phi_\alpha(x)\phi_\beta(y)dxdy= \lambda_\beta \delta_{\alpha \beta}.
\end{equation*}
Consequently, for arbitrary $I \subset \mathbb{N}\times \mathbb{N}$ we have that
\begin{equation*}
K_{I}(x,y)=\sum_{(\alpha,\beta) \in I} k_{\alpha \beta} \phi_{\alpha}(x) \phi_{\beta}(y)=\sum_{(\alpha,\alpha) \in I} \lambda_{\alpha} \phi_{\alpha}(x) \phi_{\alpha}(y).
\end{equation*}
Therefore, all partial Fourier sums of $K$ in basis $\{\phi_\alpha(x)\phi_\beta(y)\}$ contain only terms $\phi_\alpha(x) \phi_\alpha(y)$ and yield diagonal coefficient matrices consisting of corresponding eigenvalues of the Hilbert-Schmidt integral operator.

In general, it is not easy to calculate the eigenfunctions of a given Hilbert-Schmidt integral operator. Nevertheless, as we saw above, for translation invariant symmetric periodic $K$ these eigenfunctions are precisely the trigonometric functions. 
\end{remark}

\section{A numerical method} \label{sec:a_numerical_method}

In this section we propose a numerical method to solve \eqref{eq:main} for a symmetric and positive semi-definite $K$. We assume that an approximation $K_r$ of the form \eqref{eq:K_poly} is already constructed with a symmetric and positive definite $\mathbf{K}$. Thus, we devise an algorithm for the solution of \eqref{eq:main_r}.

By Theorem \ref{thm:equivalence} we have that \eqref{eq:main_r} is equivalent to \eqref{eq:a_equation_optimization}. Therefore, in what follows, we present a suitable discretization of \eqref{eq:a_equation_optimization}. We rewrite latter as
\begin{equation}\label{eq:optim_S}
\inf_{a\in C\left([0,1];\mathbb{R}^r\right)} S(a),
\end{equation}
where
\begin{equation*}
S(a) = \frac{1}{2} \langle \mathbf{J} a, a \rangle - G(a),
\end{equation*}
and $\mathbf{J} = \mathbf{K}^{-1}$.

\subsection{Discretization of the $u_a$}\label{sub:discretization_of_the_lax_formula}

We start with the discretization of $u_a$. For that, we discretize the representation formula \eqref{eq:ua_rep}. We can rewrite latter as
\begin{equation}\label{eq:u_form}
u_a(x,0) = \inf_{\bu} \int_0^1 L_a(\bx(s), \bu(s),s) ds + U(\bx(1)),
\end{equation}
where $\bx$ satisfies the following controlled ODE 
\begin{equation} \label{eq:ODE}
\dot{\bx}(s) = \bu(s), \quad \bx(0) = x,~s\in [0,1].
\end{equation}
Recall that
\begin{equation*}
L_a(x,u,s)=L(x,u)+\sum_{k=1}^r a_k(s)\phi_k(x),~(x,u,s)\in \mathbb{T}^d\times \mathbb{R}^d \times[0,1].
\end{equation*}
We choose a uniform discretization of the time interval:
\[
0=s_0<s_1<s_2<\ldots<s_N=1,
\]
with a step size $h_t = \frac{1}{N}$, hence $s_i=i h_t=\frac{i}{N}$. We denote the values of $\bx$ and $\bu$ at time $s_i$ by $\bx(s_i)=x_i$, $\bu(s_i)=u_i$. Using a backward Euler discretization of \eqref{eq:ODE} we have
\begin{equation*}
u_i = 	 \frac{x_{i} - x_{i-1}}{h_t},~i \in \{1,\ldots, N\}. 
\end{equation*}
Discretizing the integral \eqref{eq:u_form} with a right point quadrature rule and using the above discretization we get
\begin{equation} \label{eq:udisc}
\begin{cases}
\quad [u_a](x,0)&= \inf_{\{x_i\}_0^N} h_t \sum_{i=1}^{N}L_a\left(x_i,\frac{x_{i}-x_{i-1}}{h_t},s_i\right)+U(x_N),\\
\mbox{subject to:}& x_0=x.
\end{cases}
\end{equation}

\subsection{Discretization of $G$}\label{sub:discretization_of_the_functional_g}

We start by discretizing the initial measure $M$ using a convex combination of Dirac $\delta$ distributions. Denoting the discretized measure $[M]$, we have
\begin{equation*}
[M]=\sum\limits_{\alpha=1}^{Q} c_{\alpha} \delta_{y_{\alpha}}
\end{equation*}
or, in the distributional sense,
\begin{equation}\label{eq:Mdiscrete}
\int\limits_{\Tt^d} \psi(y) d[M](y)=\sum\limits_{\alpha=1}^{Q} c_{\alpha} \psi(y_{\alpha}),\quad \psi \in C(\Tt^d),
\end{equation}
for some $\{y_{\alpha}\}_{\alpha=1}^Q \subset \Tt^d$ and $\{c_{\alpha}\geq 0\}_{\alpha=1}^Q$ such that $\sum_{\alpha=1}^Q c_{\alpha}=1$.  Then, $G$ is discretized as follows
\begin{equation}\label{eq:[G]}
[G](a)=\sum\limits_{\alpha=1}^{Q} c_{\alpha}~[u_a](y_{\alpha},0).
\end{equation}

\subsection{Discretization of $S$}\label{sub:discretization_of_s}
Now, we discretize \eqref{eq:optim_S}. We first discretize $a_k$-s by taking their values at times $s_i$, that we denote by:
\begin{equation*}
[a]_k=(a_k(s_0),\cdots, a_k(s_N))=(a_{k0},\cdots, a_{kN}),~ k=1,2,\cdots,r.
\end{equation*}
Recall that
\begin{equation*}
\langle {\bf J} a, a\rangle = \sum\limits_{k,l=1}^{r} {\bf J}_{kl} \int\limits_{0}^{1} a_k(s) a_l(s) ds.
\end{equation*}
We discretize this previous quadratic form by a simple right point quadrature rule.
\begin{equation*}
[\langle {\bf J} a, a\rangle] = h_t \sum\limits_{k,l=1}^{r} {\bf J}_{kl} \sum\limits_{i=1}^{N} a_{ki}a_{li}.
\end{equation*}			
So the discretization of $S$ is
\begin{equation} \label{eq:S}
\begin{split}
[S](a)&= \frac{h_t}{2} \sum\limits_{k,l=1}^{r} {\bf J}_{kl} \sum\limits_{i=1}^{N} a_{ki}a_{li}-[G](a)\\
&= \frac{h_t}{2} \sum\limits_{k,l=1}^{r} {\bf J}_{kl} \sum\limits_{i=1}^{N} a_{ki}a_{li}- \sum\limits_{\alpha=1}^{Q} c_{\alpha}~[u_a](y_{\alpha},0),
\end{split}
\end{equation}
where we used \eqref{eq:[G]}. Therefore, the discretization of \eqref{eq:optim_S} is
\begin{equation}\label{eq:infsupDirectB_gen}
\begin{split}
&\inf_{\{a_{ki}\}}[S](a)\\
=&\inf_{\{a_{ki}\}}\sup_{\{x_{\alpha i}:~x_{\alpha 0}=y_{\alpha}\}} \frac{h_t}{2}\sum_{k,l=1}^r\mathbf{J}_{kl}\sum_{i=1}^{N}a_{ki}a_{li}-h_t\sum_{\alpha=1}^Q\sum_{i=1}^{N}c_{\alpha}L\left(x_{\alpha i},\frac{x_{\alpha i}-x_{\alpha (i-1)}}{h_t}\right)\\
&-h_t \sum_{\alpha=1}^Q \sum_{i=1}^{N}\sum_{k=1}^r c_{\alpha}a_{ki} \phi_k(x_{\alpha i})-\sum_{\alpha=1}^Q c_{\alpha} U(x_{\alpha N}).
\end{split}
\end{equation}

\subsection{Primal-dual hybrid-gradient method}\label{sub:primal_dual_hybrid_gradient_method}

Now, we specify the Lagrangian to be quadratic and devise a primal-dual hybrid-gradient algorithm \cite{chapock'11} to solve \eqref{eq:optim_S}. More precisely, we assume that
\begin{equation*}
L(x,u)=\frac{|u|^2}{2},~(x,u)\in \mathbb{T}^d\times \mathbb{R}^d,
\end{equation*}
and therefore 
\eqref{eq:infsupDirectB_gen} becomes
\begin{equation}\label{eq:infsupDirectB}
\begin{split}
&\inf_{\{a_{ki}\}}[S](a)\\
=&\inf_{\{a_{ki}\}}\sup_{\{x_{\alpha i}~:~x_{\alpha 0}=y_{\alpha}\}} \frac{h_t}{2}\sum_{k,l=1}^r\mathbf{J}_{kl}\sum_{i=1}^{N}a_{ki}a_{li}-\sum_{\alpha=1}^Q\sum_{i=1}^{N}c_{\alpha}\frac{|x_{\alpha i}-x_{\alpha (i-1)}|^2}{2h_t}\\
&-h_t \sum_{\alpha=1}^Q \sum_{i=1}^{N}\sum_{k=1}^r c_{\alpha}a_{ki} \phi_k(x_{\alpha i})-\sum_{\alpha=1}^Q c_{\alpha} U(x_{\alpha N}).
\end{split}
\end{equation}

Now, we describe the algorithm. For each iteration time $\nu\geq 0$ we have four groups of variables: 
$a^\nu=\{a^\nu_{ki}\}_{k,i=1,1}^{r,N},~x^\nu=\{x^\nu_{\alpha i}\}_{\alpha,i=1,0}^{Q,N}$, and $z^\nu=\{z^\nu_{\alpha i}\}_{\alpha,i=1,0}^{Q,N}$. Furthermore, we fix $\lambda, \omega>0$ that are proximal step parameters for variables $a$ and $x$, respectively. Additionally, we take $0\leq \theta \leq 1$.

\textbf{Step 1.} Given $a^\nu,x^\nu,z^\nu$ the first step of the algorithm is to solve the proximal problem
\begin{equation*}
\begin{split}
&\inf_{\{a_{ki}\}} \frac{h_t}{2}\sum_{k,l=1}^r\mathbf{J}_{kl}\sum_{i=1}^{N}a_{ki}a_{li}-\sum_{\alpha=1}^Q\sum_{i=1}^{N}c_{\alpha}\frac{|z_{\alpha i}^\nu-z_{\alpha (i-1)}^\nu|^2}{2h_t}\\
&-h_t \sum_{\alpha=1}^Q \sum_{i=1}^{N}\sum_{k=1}^r c_{\alpha}a_{ki} \phi_k(z_{\alpha i}^\nu)-\sum_{\alpha=1}^Q c_{\alpha} U(z_{\alpha N}^\nu)
+\frac{1}{2\lambda} \sum_{k=1}^r\sum_{i=1}^N (a_{ki}-a^{\nu}_{ki})^2,
\end{split}
\end{equation*}
that is equivalent to
\begin{equation*}
\begin{split}
&\inf_{\{a_{ki}\}} \frac{h_t}{2}\sum_{k,l=1}^r\mathbf{J}_{kl}\sum_{i=1}^{N}a_{ki}a_{li}- h_t \sum_{\alpha=1}^Q \sum_{i=1}^{N}\sum_{k=1}^r c_{\alpha}a_{ki} \phi_k(z_{\alpha i}^\nu)+\frac{1}{2\lambda} \sum_{k=1}^r\sum_{i=1}^N (a_{ki}-a^{\nu}_{ki})^2.
\end{split}
\end{equation*}
Thus, we obtain the following update of the $a$-variable.
\begin{equation}\label{eq:a_iterationDirect}
\begin{pmatrix}
a^{\nu+1}_{1i}\\
a^{\nu+1}_{2i}\\
\vdots\\
a^{\nu+1}_{ri}
\end{pmatrix}= (\lambda h_t {\bf J}+\mathrm{Id}_r)^{-1} 
\begin{pmatrix}
a^\nu_{1i}+\lambda h_t \sum_{\alpha=1}^Q c_{\alpha} \phi_1(z^\nu_{\alpha i})\\
a^\nu_{2i}+\lambda h_t\sum_{\alpha=1}^Q c_{\alpha} \phi_2(z^\nu_{\alpha i})\\
\vdots\\
a^\nu_{ri}+\lambda h_t\sum_{\alpha=1}^Q c_{\alpha} \phi_r(z^\nu_{\alpha i})
\end{pmatrix},\quad 1\leq i \leq N.
\end{equation}
\begin{remark}\label{rem:time_par}
	Note that although the number of variables $\{a_{ki}\}_{k,i=1,1}^{r,N}$ is $r\times N$, the calculations of $\{a_{ki}\}$ for different $i$-s are mutually independent. Therefore, the only complexity is in the inversion of an $r\times r$ matrix $\lambda \sigma {\bf J}+\mathrm{Id}_r$ that can be computed beforehand and used throughout the scheme. Moreover, as seen in Section \ref{sec:approx}, translation invariant symmetric kernels yield diagonal matrices that extremely simplify the calculations.
\end{remark}

\textbf{Step 2.} Given $a^{\nu+1},x^\nu,z^\nu$ we update $x$-variable by solving the proximal problem
\begin{equation*}
\begin{split}
&\inf_{\{x_{\alpha i}:~x_{\alpha0}=y_{\alpha}\}} \sum_{\alpha=1}^Q\sum_{i=1}^{N}c_{\alpha}\frac{|x_{\alpha i}-x_{\alpha (i-1)}|^2}{2h_t}+ h_t \sum_{\alpha=1}^Q \sum_{i=1}^{N}\sum_{k=1}^r c_{\alpha}a^{\nu+1}_{ki} \phi_k(x_{\alpha i})\\ 
&+\sum_{\alpha=1}^Q c_{\alpha} U(x_{\alpha N}) +\frac{1}{2\omega} \sum_{\alpha=1}^Q\sum_{i=1}^N |x_{\alpha i}-x^{\nu}_{\alpha i}|^2.
\end{split}
\end{equation*}
Solving this previous problem may be a costly operation. Hence, we just perform a one step gradient descent. Therefore, we obtain
\begin{equation}\label{eq:x_iterationDirect}
\begin{split}
x^{\nu+1}_{\alpha 1} &= x^{\nu}_{\alpha 1}-\frac{\omega c_\alpha}{h_t}(x_{\alpha 1}-y_{\alpha})-\frac{\omega c_\alpha}{h_t}(x_{\alpha 1}-x_{\alpha 2})-\omega c_\alpha h_t \sum_{k=1}^r a^{\nu+1}_{k1}\nabla \phi_k(x_{\alpha 1}),\\
x^{\nu+1}_{\alpha i} &= x^{\nu}_{\alpha i}-\frac{\omega c_\alpha}{h_t}(x_{\alpha i}-x_{\alpha (i-1)})-\frac{\omega c_\alpha}{h_t}(x_{\alpha i}-x_{\alpha (i+1)}),\\
&-\omega c_\alpha h_t \sum_{k=1}^r a^{\nu+1}_{k i}\nabla \phi_k(x_{\alpha i}), \quad 1\leq i \leq N-1,\\
x^{\nu+1}_{\alpha N} &= x^{\nu}_{\alpha N}-\frac{\omega c_\alpha}{h_t}(x_{\alpha N}-x_{\alpha (N-1)})-\omega c_\alpha \nabla U (x_{\alpha N})-\omega c_\alpha h_t \sum_{k=1}^r a^{\nu+1}_{k N}\nabla \phi_k(x_{\alpha N}).
\end{split}
\end{equation}

\textbf{Step 3.} In the final step we update the $z$-variable by
\begin{equation}\label{eq:z_updateDirect}
z^{\nu+1}_{\alpha i}=x^{\nu+1}_{\alpha i}+\theta (x_{\alpha i}^{\nu+1}-x^\nu_{\alpha i}), \quad 1\leq \alpha \leq Q, ~1\leq i \leq N.
\end{equation}

\begin{remark}\label{rem:space_par}
	Note that the updates for $\{x_{\alpha i}\},\{z_{\alpha i}\}$ variables are mutually independent for different $\alpha$-s. Therefore, our $a$-updates are parallel in time, and $x,z$-updates are parallel in space.
\end{remark}

\begin{remark}\label{rem:PDHGnonst}
Strictly speaking, one cannot simply apply the primal-dual hybrid gradient method to \eqref{eq:infsupDirectB} because the coupling between $a$ and $x$ is not bilinear, and there is no concavity in $x$. Nevertheless, our calculations always yield solid results. Therefore, there is a natural problem of rigorously understanding the convergence properties of the aforementioned algorithm. We plan to address this problem in our future work.
\end{remark}

\section{Numerical Examples}\label{sec:numerical_examples}

In this section, we present several numerical experiments. We first look into one-dimensional case, in Section \ref{sub:1_dimensional_examples}, and after we consider the two-dimensional case, in Section \ref{sub:2_dimensional_examples}.

For our calculations, we choose the periodic Gaussian kernel that is given by
\begin{equation}\label{eq:K_smud}
K^d_{\sigma,\mu}(x,y)=\prod_{i=1}^{d} K^1_{\sigma,\mu}(x_i,y_i),~x,y\in \mathbb{T}^d,
\end{equation} 
where
\begin{equation}\label{eq:kernel1}
K^1_{\sigma,\mu}(x,y)=\frac{\mu}{\sqrt{2\pi (\frac{\sigma}{2})^2}}\sum_{k=-\infty}^{\infty}e^{-\frac{(x-y-k)^2}{2 (\frac{\sigma}{2})^2}},~x,y\in \mathbb{T},
\end{equation}
and $\sigma, \mu >0$ are given parameters. Here, $\sigma$ models how spread is the kernel. The smaller $\sigma$ the more weight agents assign to their immediate neighbors -- this translates into crowd-aversion in the close neighborhood only. Furthermore, $\mu$ is the total weight of the agents. Therefore, $\mu$ measures how sensitive is a generic agent to the total population, the bigger the more averse is the agent to others. As we observe in the numerical experiments, the less $\sigma$ and the larger $\mu$ the more separated are the agents. This phenomenon was also observed in \cite{aurelldjehiche'18}.

Throughout the section we denote by
	\begin{equation}\label{eq:phisystem}
		\phi_k(x)=
		\begin{cases}
			1,~\mbox{if}~k=1,\\
			\sqrt{2}\cos \pi (k-1) x,~\mbox{if}~k~\mbox{is odd,}~\mbox{and}~k>0,\\
			\sqrt{2}\sin \pi k x,~\mbox{if}~k~\mbox{is even},~x\in \mathbb{T}.
		\end{cases}
	\end{equation}
	Therefore, we have
	\begin{equation*}
		\{\phi_1,\phi_2,\phi_3,\cdots\}=\{1,\sqrt{2}\sin 2\pi x, \sqrt{2} \cos 2\pi x,\cdots \}.
	\end{equation*}

\subsection{One-dimensional examples}\label{sub:1_dimensional_examples}
For all simulations we use the same initial-terminal conditions
\begin{equation*}
	M(x) = \frac{1}{6} + \frac{5}{3} \sin^2 \pi x,\quad U(x) = 1+\sin \left(4\pi x+ \frac{\pi}{2}\right),~x\in \mathbb{T},
\end{equation*}
that are depicted in Figure \ref{fig:itc_1}.
		\begin{figure}[h] 
		    \centering
		    \begin{subfigure}[t]{0.5\textwidth}
		        \centering
		        \includegraphics[width=1\textwidth]{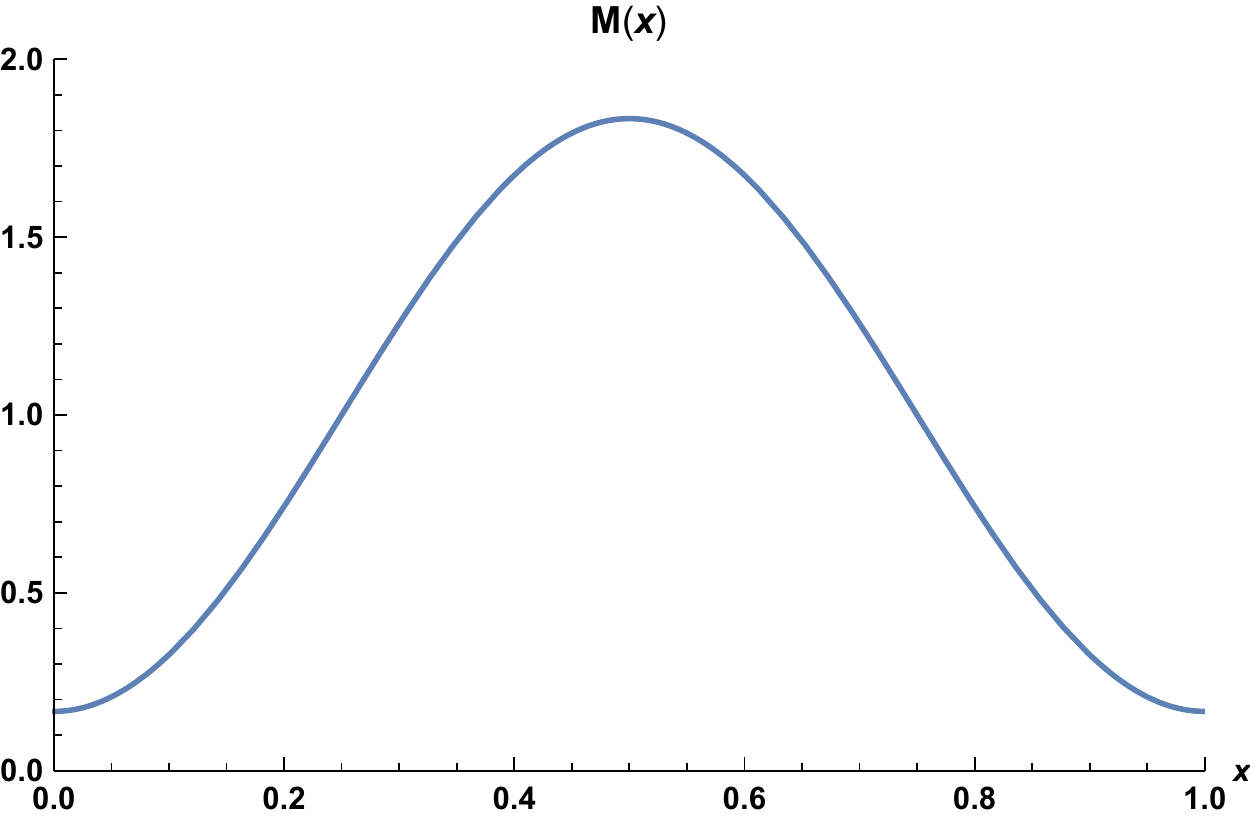}
		        \caption{Initial distribution of agents, $M(x)$.}
		    \end{subfigure}%
		    ~ 
		    \begin{subfigure}[t]{0.5\textwidth}
		        \centering
		        \includegraphics[width=1\textwidth]{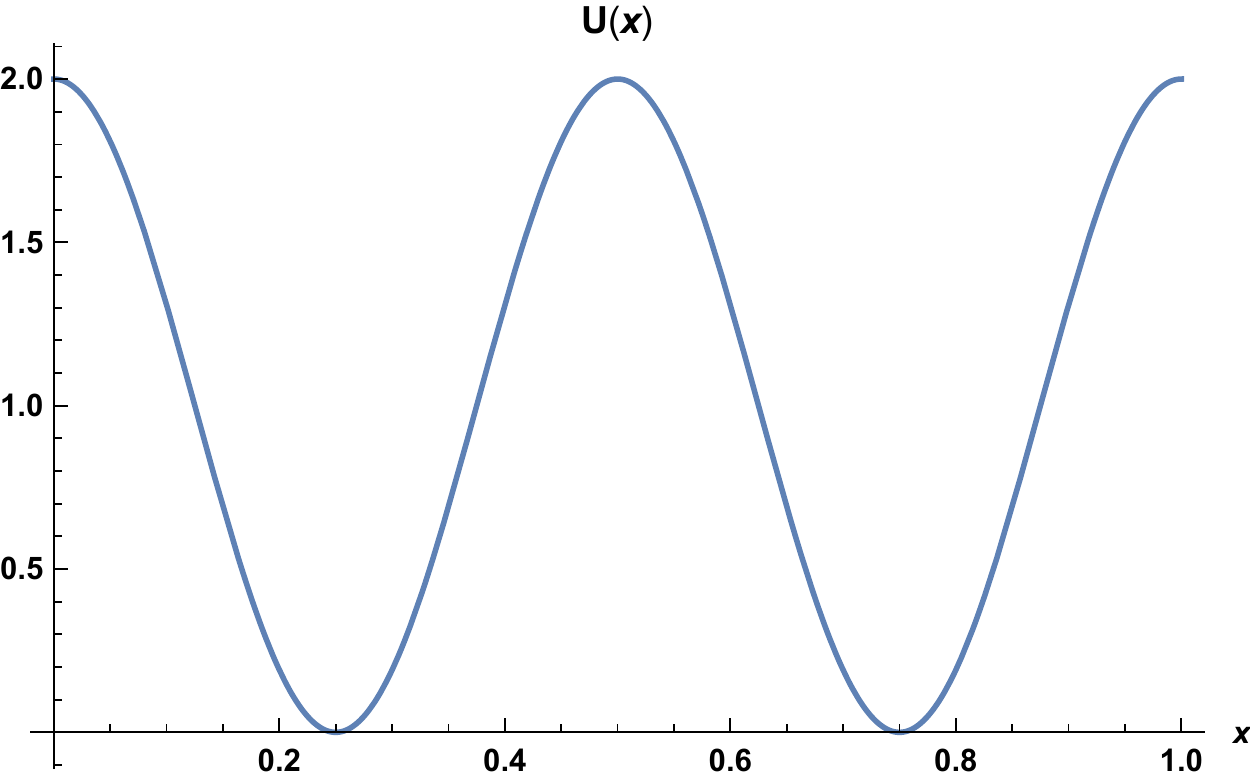}
		        \caption{Terminal cost function, $U(x)$.}
		    \end{subfigure}
			\caption{Initial-terminal conditions.}
			\label{fig:itc_1}
		\end{figure}
		We also use the same time and space discretization for all one dimensional experiments, and the same parameters for the numerical scheme. We discretize the time using a step size $\Delta t = \frac{1}{N}$. For the discretization of $M$ we use
		\begin{equation*}
		y_\alpha=\frac{\alpha}{Q+1},\quad c_{\alpha}=\frac{M(y_\alpha)}{\sum_{\beta=1}^Q M(y_\beta)} ~1\leq \alpha \leq Q.
		\end{equation*}
		We choose $N=20,~Q=50$ and use eight basis functions, $r=8$. Additionally, we set the numerical scheme parameters to $\lambda = 3,~\omega = \frac{1}{12}$ and $\theta=1$.
\begin{remark}
For the standard primal-dual hybrid gradient method, one must have $\omega \lambda <\frac{1}{A^2}$, where $A$ is the norm of the bilinear-form matrix. As we mentioned in Remark \ref{rem:PDHGnonst}, here we do not have a bilinear coupling between $a$ and $x$. Thus, we estimate $A$ by an upper bound on the $(l_2,l_2)$ Lipschitz norm of the mapping
\begin{equation*}
F_{ki}(x)=h_t \sum_{\alpha=1}^Q c_\alpha \phi_k(x_{\alpha i}),~1\leq l \leq r,~1\leq i \leq N.
\end{equation*}
More precisely, we have that
\begin{equation*}
\begin{split}
\mathrm{Lip}(F)^2=&\sup_{\{x_{\beta j}\}} \sup_{\|w_{\beta j}\|_2\leq 1} \sum_{k,i} \left(\sum_{\beta,j} \frac{\partial F_{ki}}{\partial x_{\beta j}}w_{\beta j}\right)^2\\
=&\sup_{\{x_{\beta j}\}} \sup_{\|w_{\beta j}\|_2\leq 1} \sum_{k,i} \left(\sum_{\beta} h_t c_\beta \nabla \phi_k(x_{\beta i}) w_{\beta i}\right)^2\\
\leq &h_t^2 \sup_{\{x_{\beta j}\}} \sup_{\|w_{\beta j}\|_2\leq 1} \sum_{k,i} \left( \sum_{\beta} c_\beta^2 \|\nabla \phi_k(x_{\beta i})\|_2^2 \cdot  \sum_\beta w_{\beta i}^2\right)\\
\leq & h_t^2 \sup_{\|w_{\beta j}\|_2\leq 1} \sum_{k,i} \mathrm{Lip}(\phi_k)^2 \left( \sum_{\beta} c_\beta^2  \cdot  \sum_\beta w_{\beta i}^2\right)\\
= & h_t^2 \sup_{\|w_{\beta j}\|_2\leq 1} \sum_{k} \mathrm{Lip}(\phi_k)^2  \sum_{\beta} c_\beta^2  \sum_{\beta,i} w_{\beta i}^2\\
= & h_t^2  \sum_{k} \mathrm{Lip}(\phi_k)^2  \sum_{\beta} c_\beta^2.  
\end{split}
\end{equation*}
Thus, we take
\begin{equation*}
A^2= h_t^2  \sum_{k=1}^r \mathrm{Lip}(\phi_k)^2  \sum_{\beta=1}^Q c_\beta^2.  
\end{equation*}
\end{remark}
		
The trigonometric expansion of $K^1_{\sigma,\mu}$ is given by
			\begin{equation}\label{eq:kernel1expansioncos}
				K^1_{\sigma,\mu}(x,y)=\mu \left(1+2\sum_{n=1}^\infty e^{-\frac{(\pi n \sigma)^2}{2}} \cos 2\pi n (x-y)\right),~x,y \in \mathbb{T},
			\end{equation}
or
			\begin{equation}\label{eq:kernel1expansionphi}
				K^1_{\sigma,\mu}(x,y)=\sum_{k=1}^\infty \mu e^{-\frac{1}{2}\left(\pi \sigma \left[\frac{k}{2}\right]\right)^2} \phi_k(x)\phi_k(y),~x,y \in \mathbb{T},
			\end{equation}
in our notation. Therefore, for a given $r$, the matrices ${\bf K, J}$ are given by
			\begin{equation}\label{eq:KJmats1}
				\begin{split}
					{\bf K}=& \mbox{diag} \left(\mu e^{-\frac{1}{2}\left(\pi \sigma \left[\frac{k}{2}\right]\right)^2}\right)_{k=1}^r,\\
					{\bf J}=& \mbox{diag} \left(\mu^{-1} e^{\frac{1}{2}\left(\pi \sigma \left[\frac{k}{2}\right]\right)^2}\right)_{k=1}^r.
				\end{split}
			\end{equation}
		In Figure \ref{1d_kernels} we plot the Gaussian kernels we used, for $r=8$ and different values of $\mu$ and $\sigma$.
		We see the influence of these values in Figure \ref{1D_comparison}.
		In the first column of Figure \ref{1D_comparison} we compare the results regarding for different values of $\mu$ and $\sigma$.
			
		Comparing the first and the second columns of Figure \ref{1D_comparison}, we see that the trajectories of the agents in the first column are closer than in the second one.
		This is due to the fact that $\mu = 0.5$ in the first kernel and $\mu=1.5$ in the second one, hence the second kernel (higher value of $\mu$) penalizes more high density of agents.
		Therefore, the agents spread out more before the final time when they converge to the points of low-cost near minima of the terminal cost function, $U$, see Figure \ref{fig:itc_1} (b).
		
		In the last column the value of $\sigma = 0.8$ is higher, this means that agents are indifferent to the distances between them -- they just feel the total mass. Hence, they minimize the travel distances from initial positions to low-cost locations of $U$ ignoring the population density. In fact, in this case $K^1_{\sigma,\mu}\approx \mu$, and therefore $\int_{\mathbb{T}} K^1_{\mu,\sigma}(x,y)m(y,t)dy \approx \mu$. Thus, in this case \eqref{eq:main} approximates a decoupled system of Hamilton-Jacobi and Fokker-Planck equations. But the optimal trajectories of the decoupled system are straight lines by Hopf-Lax formula. As we can see in Figure \ref{1D_comparison} (d), this fact is consistent with the straight-line trajectories that we obtain.
			\begin{figure}[h]
			    \centering
			    \begin{subfigure}[t]{0.5\textwidth}
			        \centering
			        \includegraphics[width=1\textwidth]{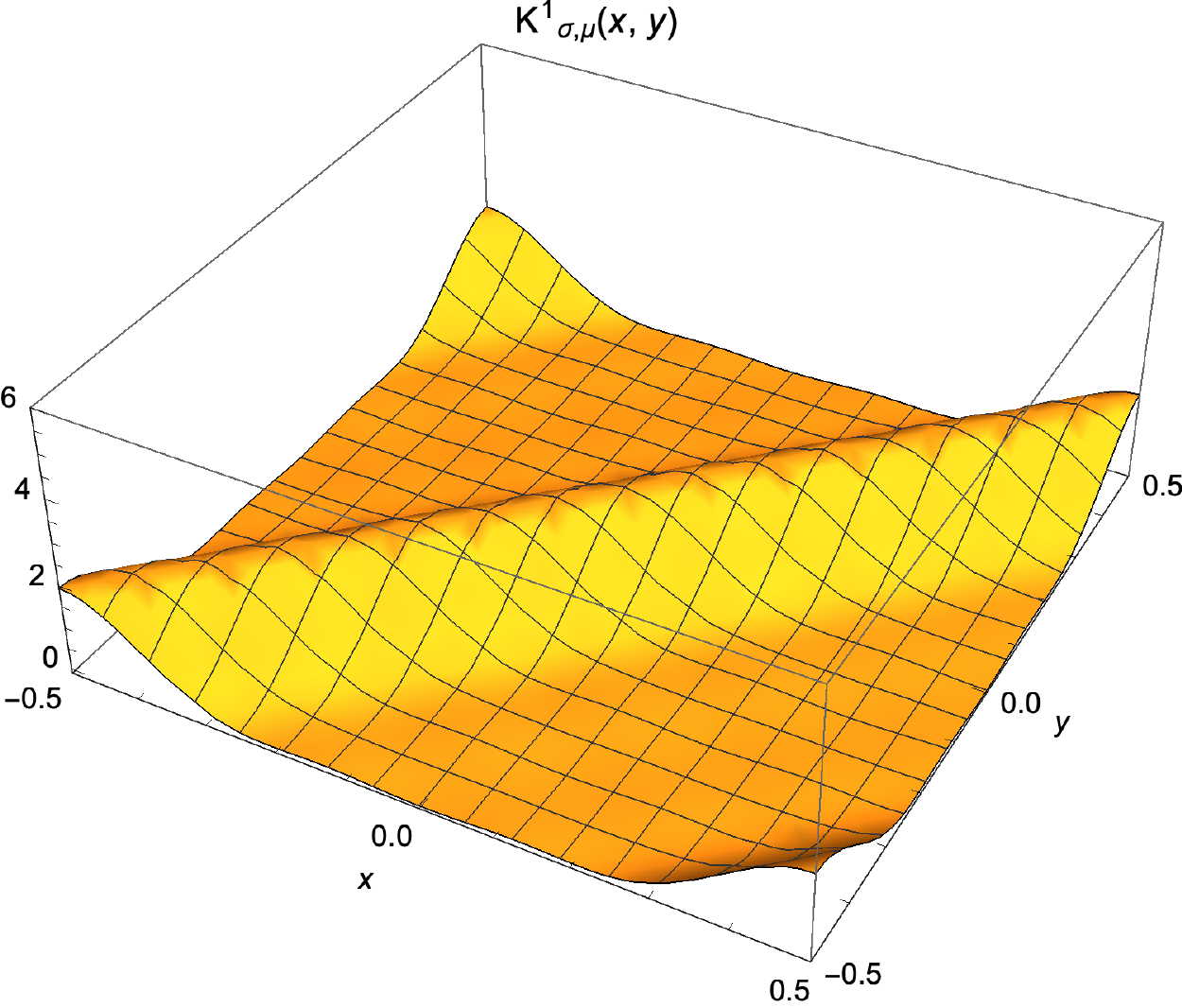}
			        \caption{Gaussian kernel, $K^1_{0.2, 0.5}(x,y)$.}
			    \end{subfigure}%
			    ~ 
			    \begin{subfigure}[t]{0.5\textwidth}
			        \centering
			        \includegraphics[width=1\textwidth]{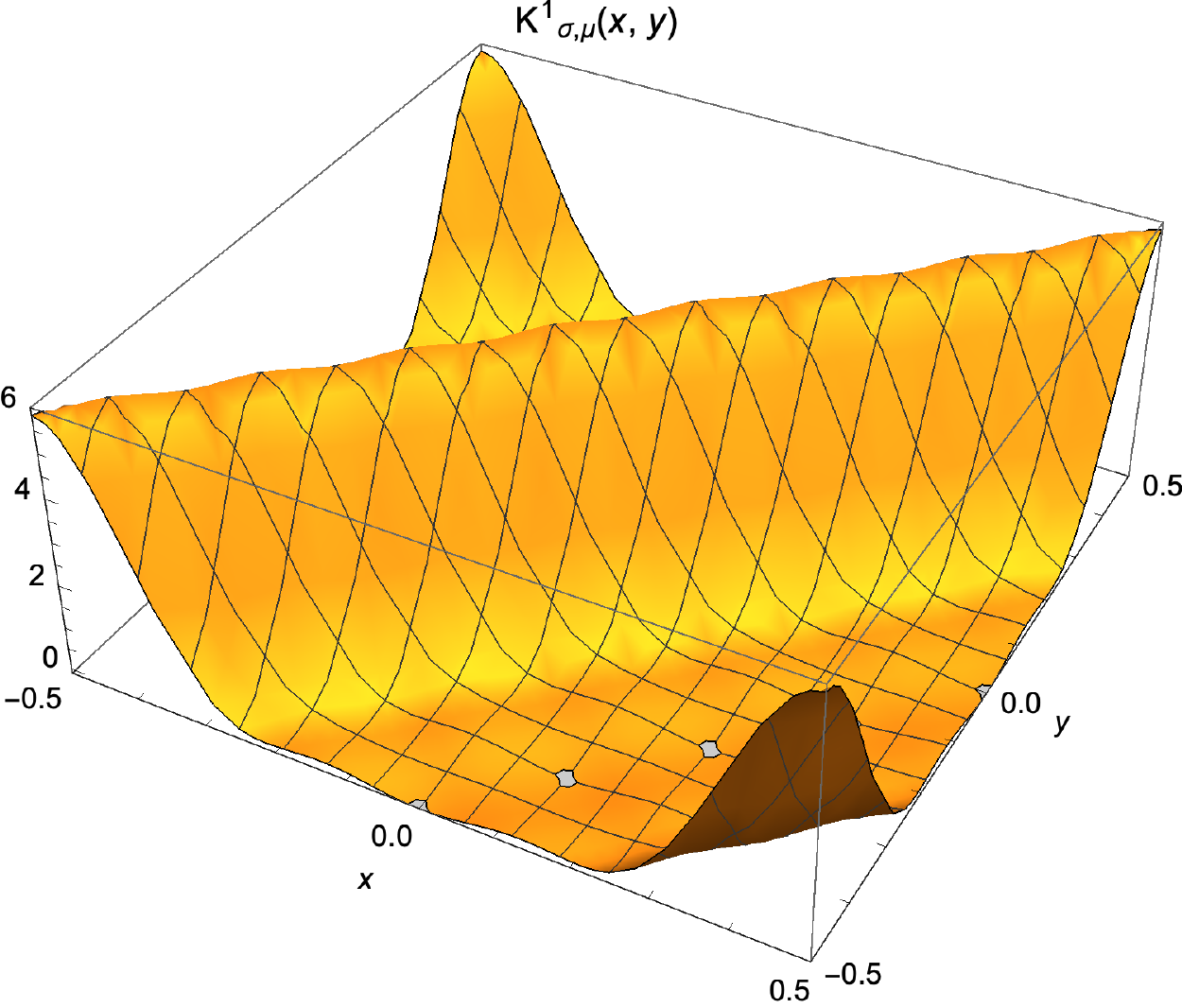}
			        \caption{Gaussian kernel, $K^1_{0.2, 1.5}(x,y)$.}
			    \end{subfigure}
		
			    \begin{subfigure}[t]{0.5\textwidth}
			        \centering
			        \includegraphics[width=1\textwidth]{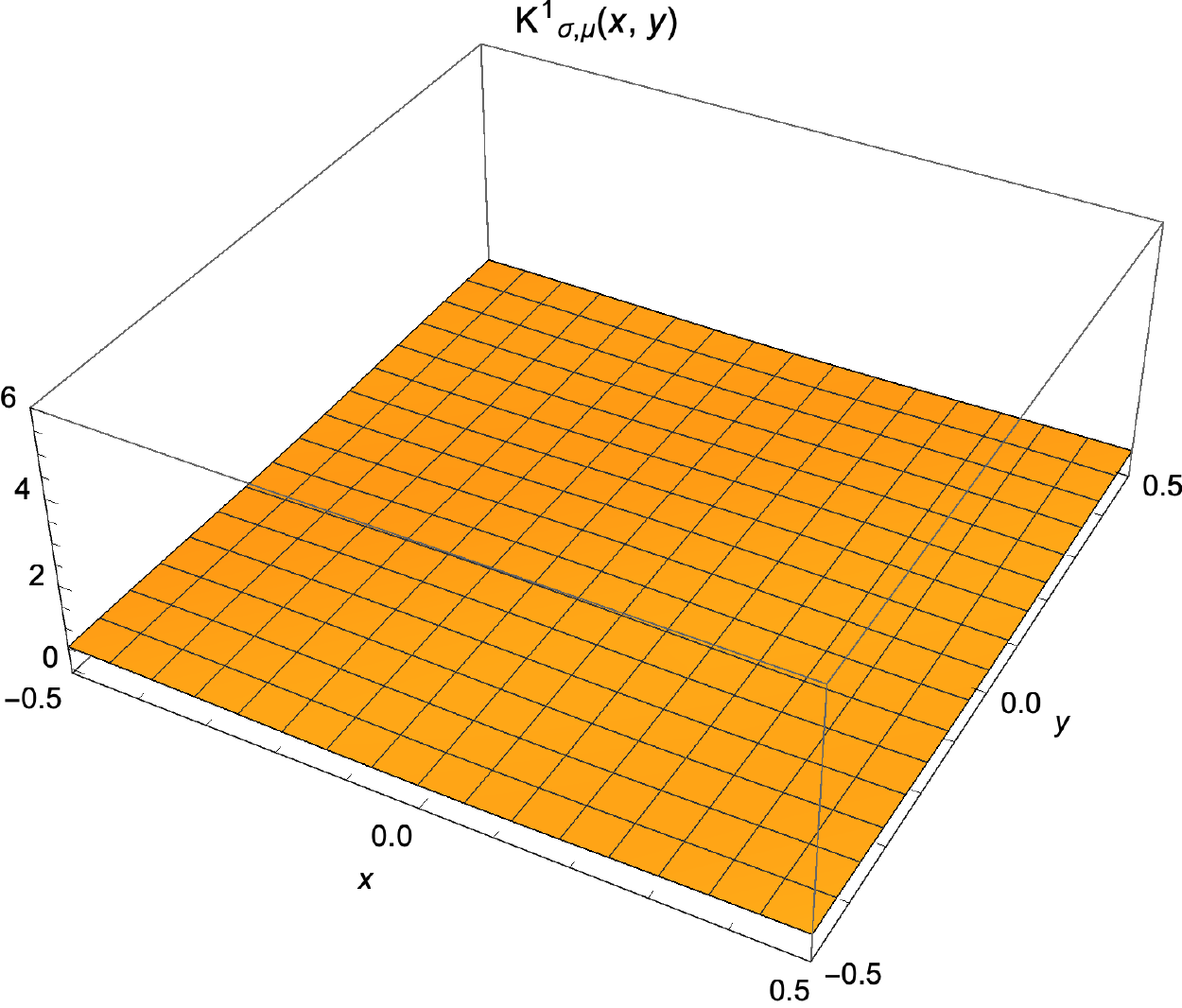}
			        \caption{Gaussian kernel, $K^1_{0.8, 0.5}(x,y)$.}
			    \end{subfigure}%
			    ~ 
			    \begin{subfigure}[t]{0.5\textwidth}
			        \centering
			        \includegraphics[width=1\textwidth]{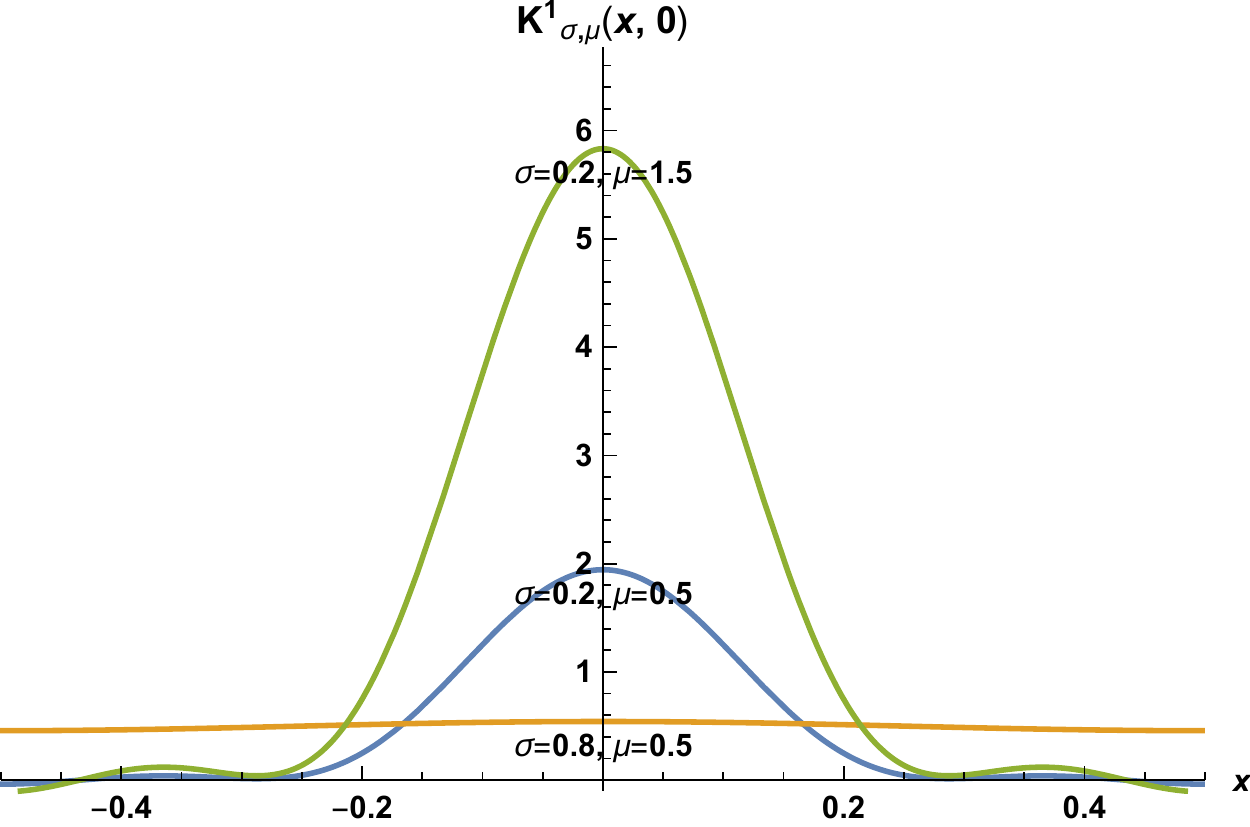}
			        \caption{Comparison of kernels on $K^1_{\sigma, \mu}(x,0)$.}
			    \end{subfigure}
				\caption{Plots of the three Gaussian kernels in (a)-(c), and a comparison of their sections in (d).}
				\label{1d_kernels}
			\end{figure}
		
			\begin{figure}[h]
			    \centering
			    \begin{subfigure}[t]{0.3\textwidth}
			        \centering
			        \includegraphics[width=1\textwidth]{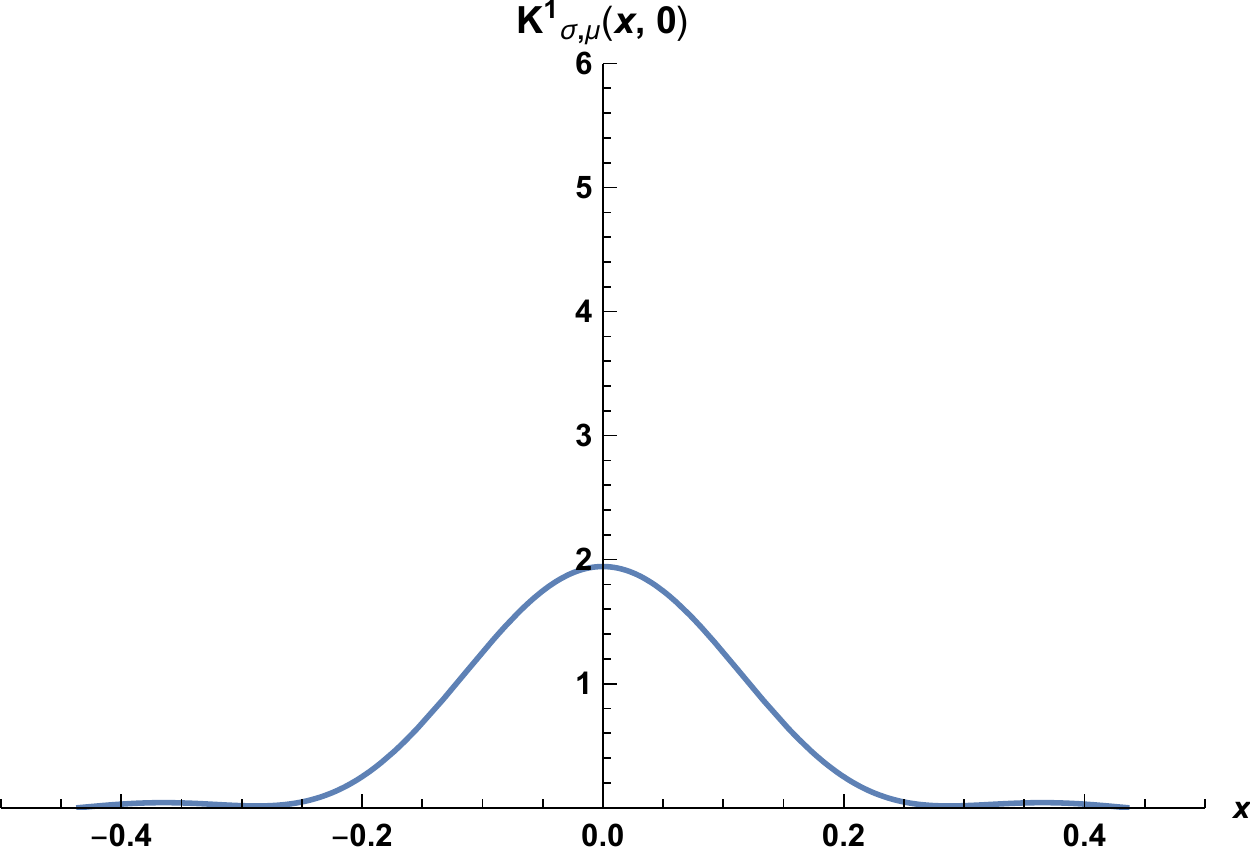}
			        \caption{$K^1_{0.2, 0.5}(x,0)$.}
			    \end{subfigure}%
			    ~
			    \begin{subfigure}[t]{0.3\textwidth}
			        \centering
			        \includegraphics[width=1\textwidth]{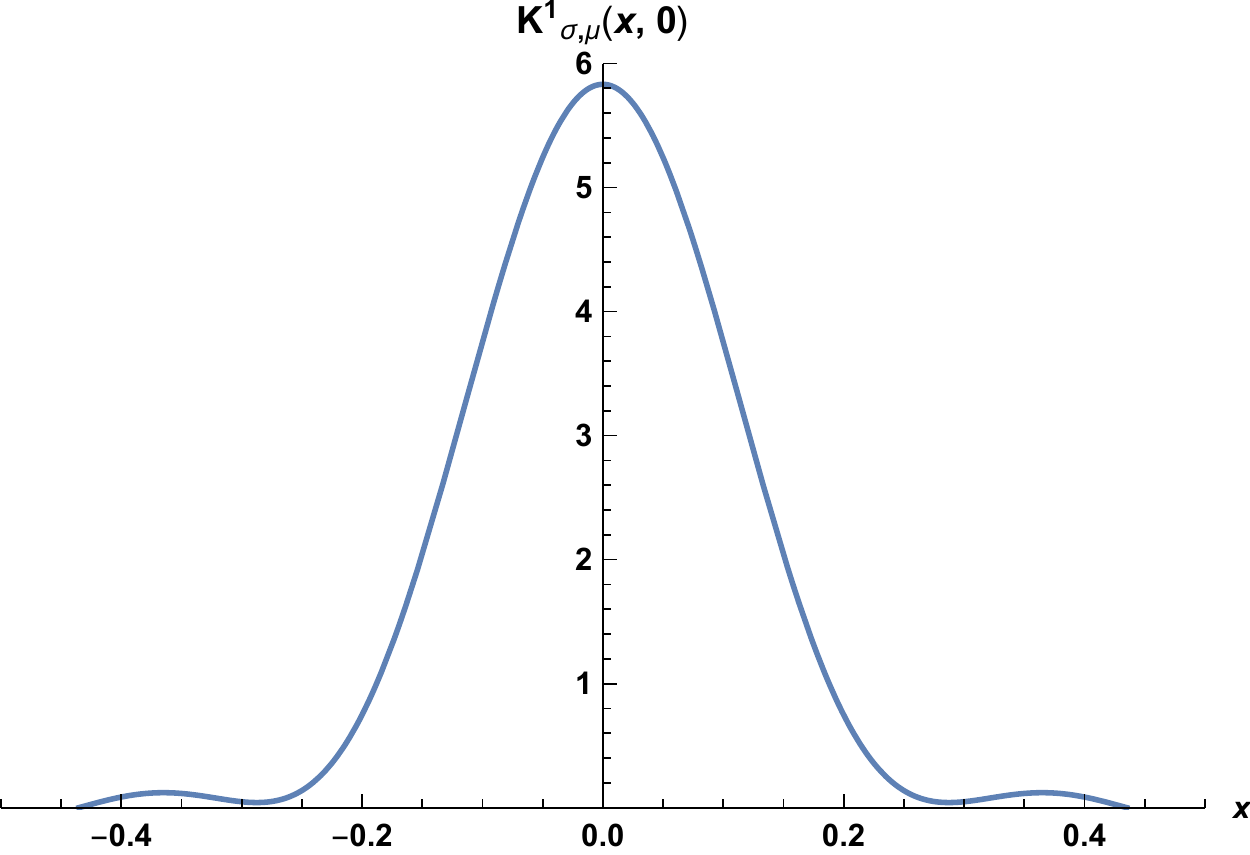}
			        \caption{$K^1_{0.2, 1.5}(x,0)$.}
			    \end{subfigure}
				 ~
			    \begin{subfigure}[t]{0.3\textwidth}
			        \centering
			        \includegraphics[width=1\textwidth]{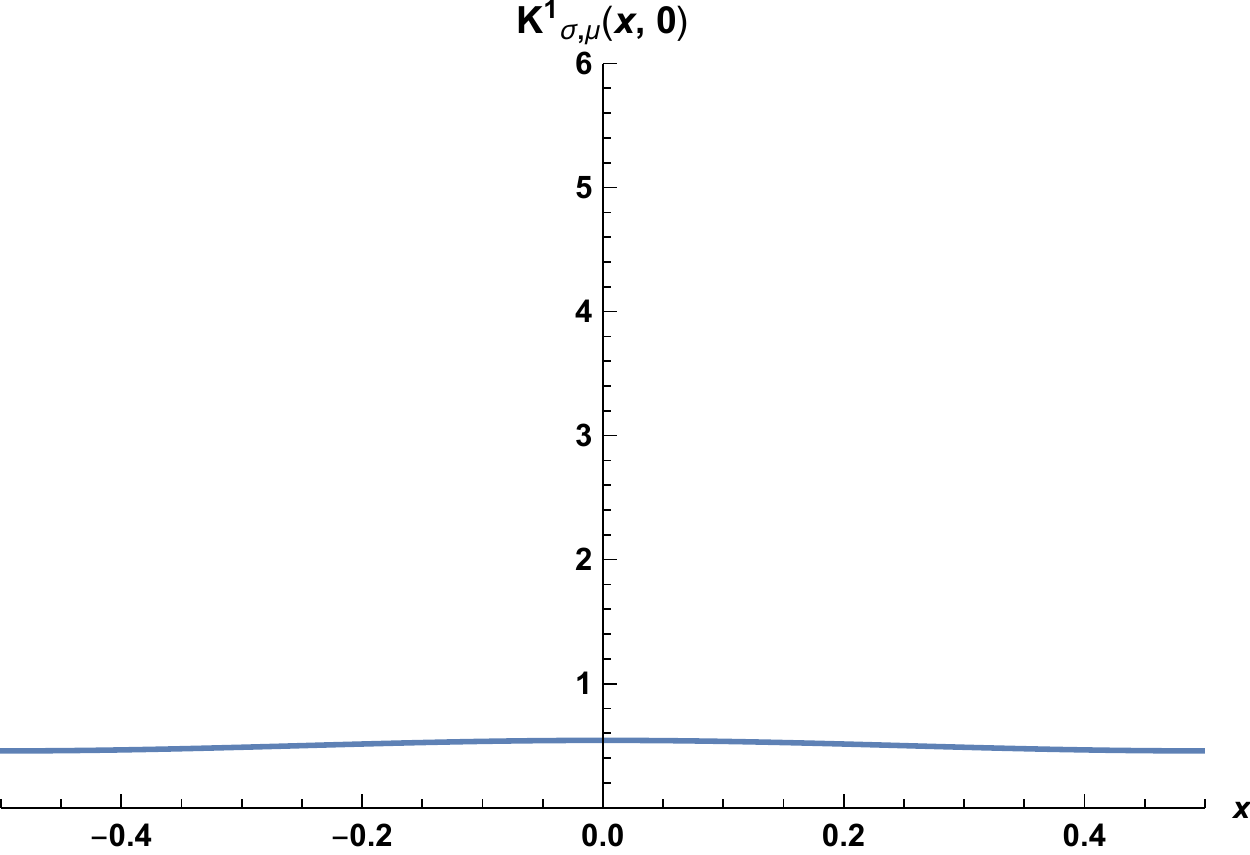}
			        \caption{$K^1_{0.8, 0.5}(x,0)$.}
			    \end{subfigure}%
		    
			    \begin{subfigure}[t]{0.3\textwidth}
			        \centering
			        \includegraphics[width=1\textwidth]{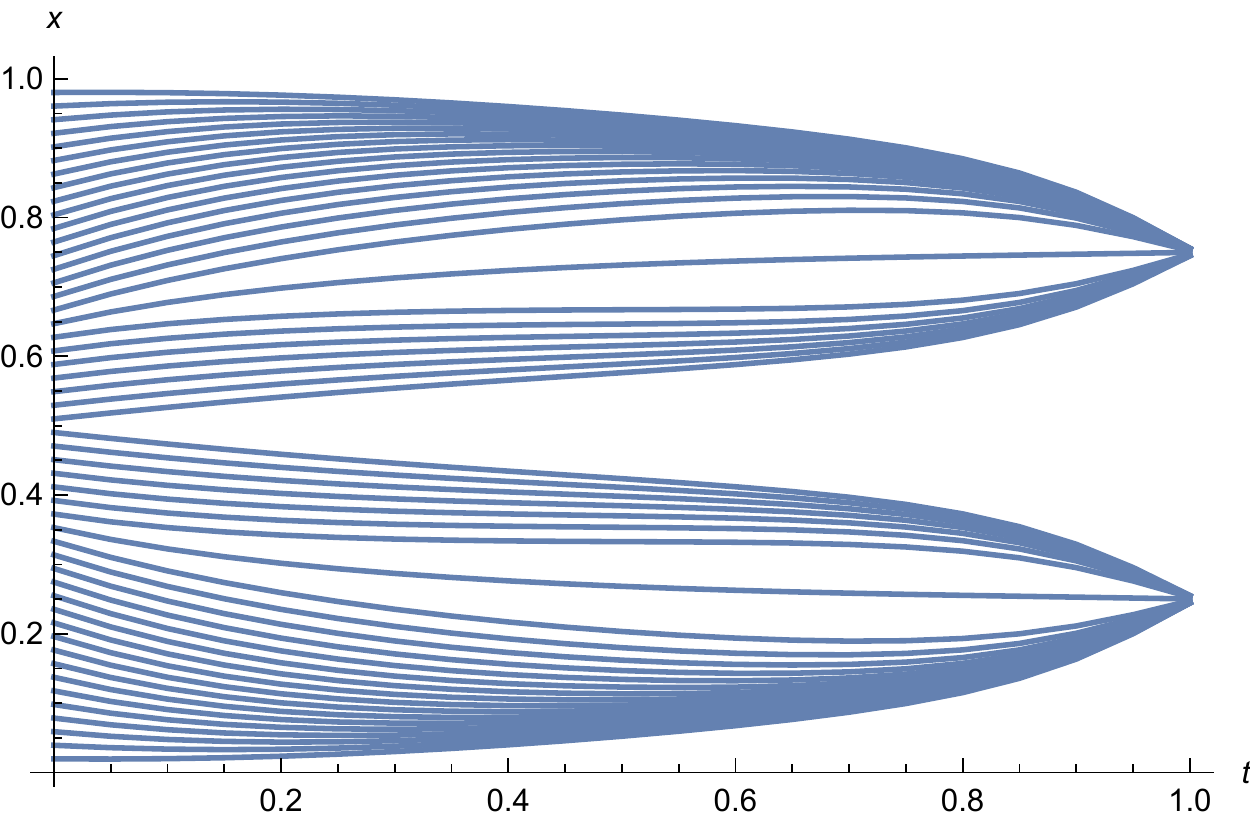}
			    \end{subfigure}%
			    ~
			    \begin{subfigure}[t]{0.3\textwidth}
			        \centering
			        \includegraphics[width=1\textwidth]{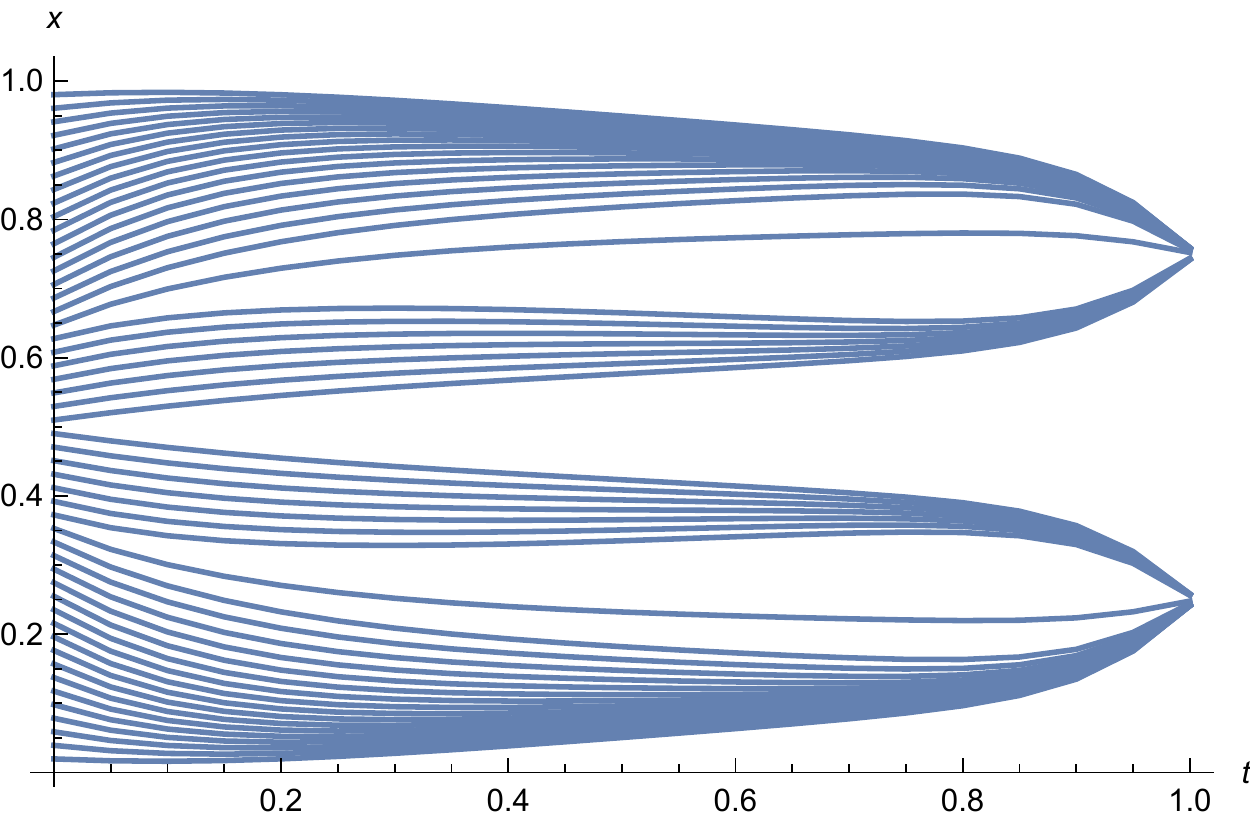}
			        \caption{Trajectories, $\bx(t,y_\alpha)$.}
			    \end{subfigure}
				 ~
			    \begin{subfigure}[t]{0.3\textwidth}
			        \centering
			        \includegraphics[width=1\textwidth]{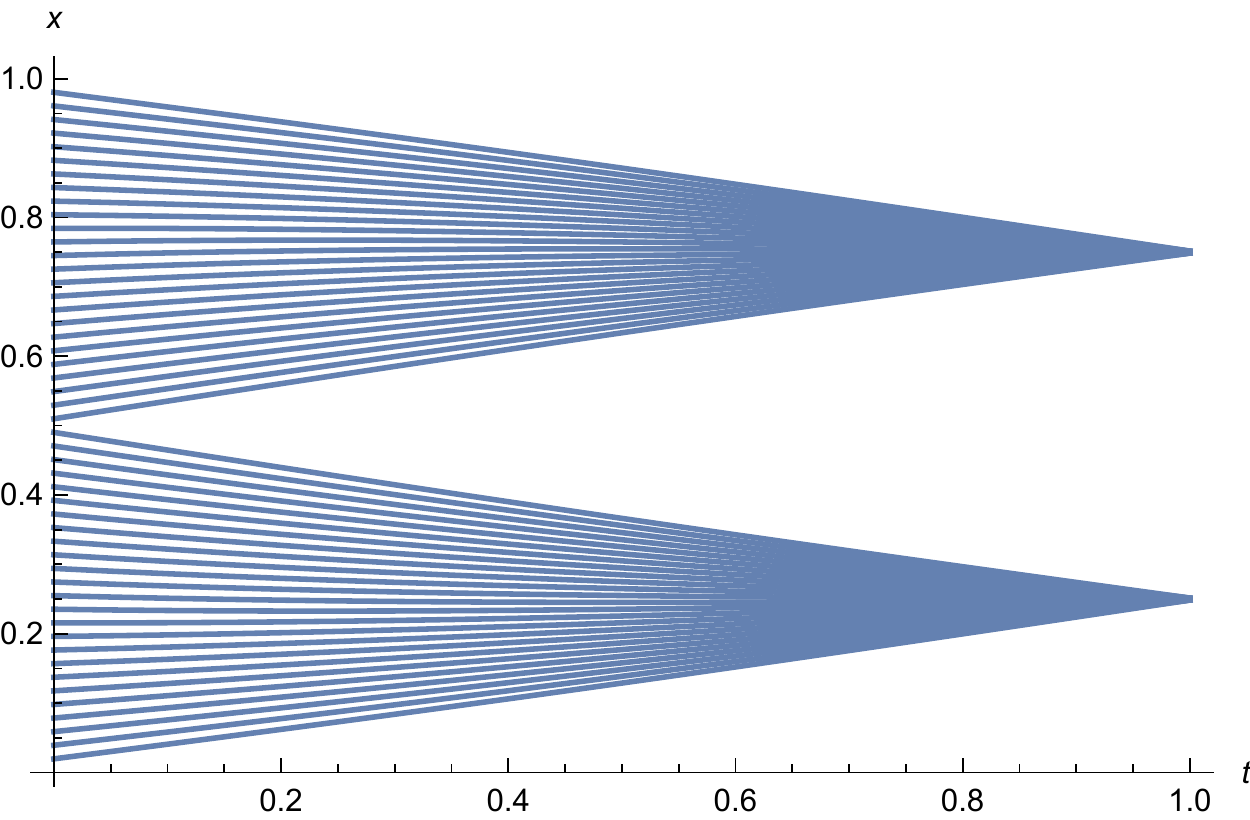}
			    \end{subfigure}%
			 
			    \begin{subfigure}[t]{0.3\textwidth}
			        \centering
			        \includegraphics[width=1\textwidth]{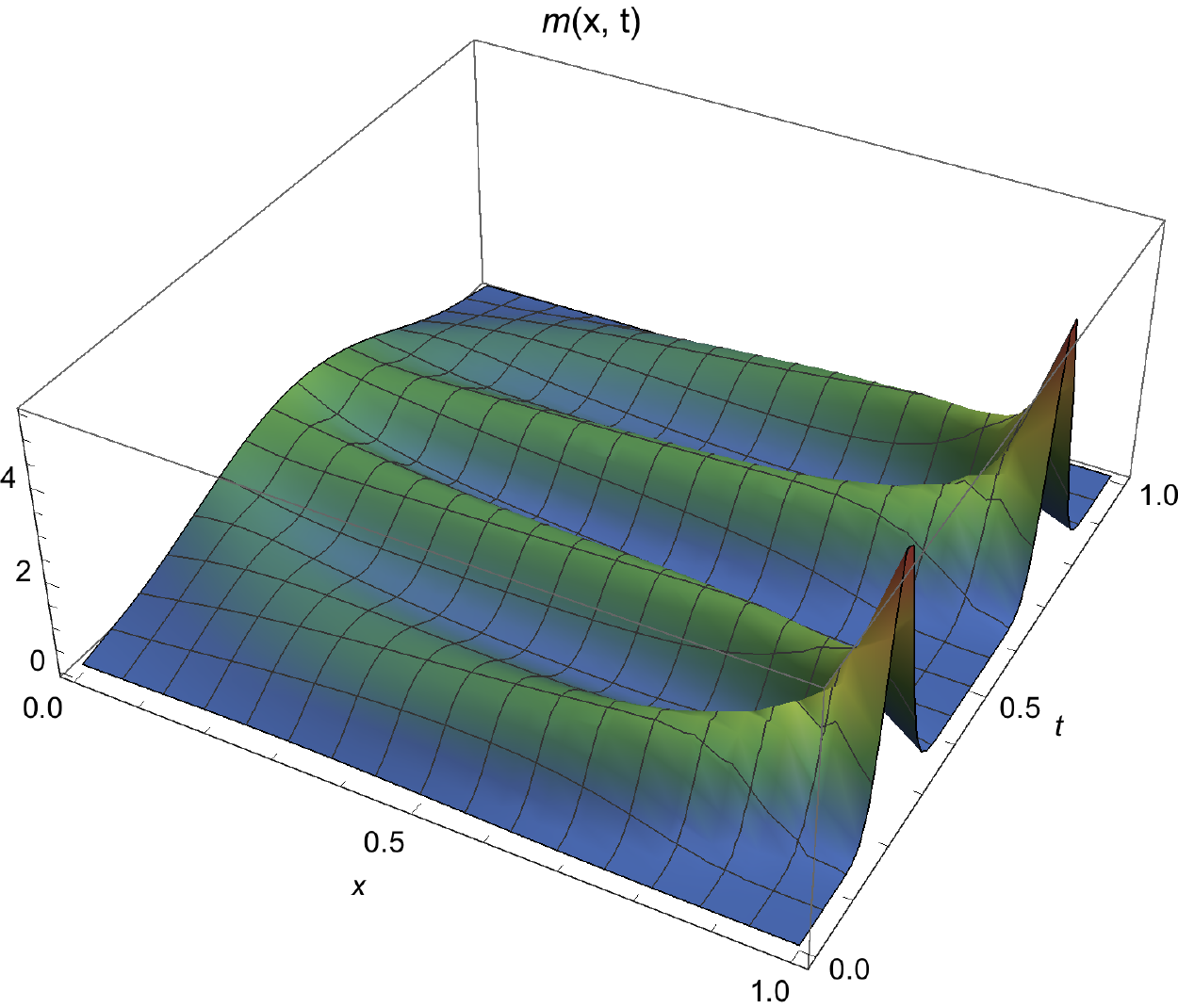}
			    \end{subfigure}%
			    ~
			    \begin{subfigure}[t]{0.3\textwidth}
			        \centering
			        \includegraphics[width=1\textwidth]{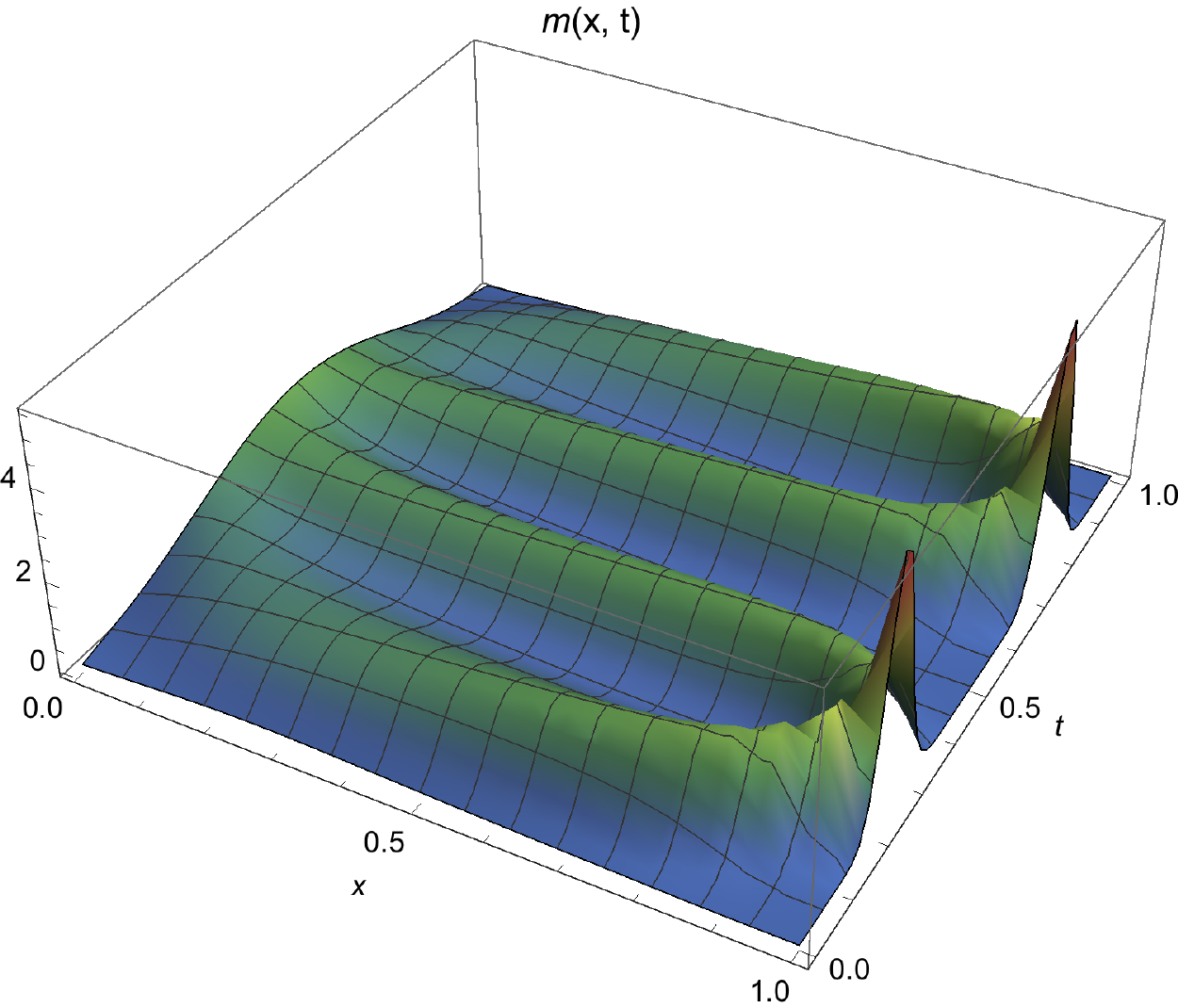}
			        \caption{Density, $m(x,t)$.}
			    \end{subfigure}
				 ~
			    \begin{subfigure}[t]{0.3\textwidth}
			        \centering
			        \includegraphics[width=1\textwidth]{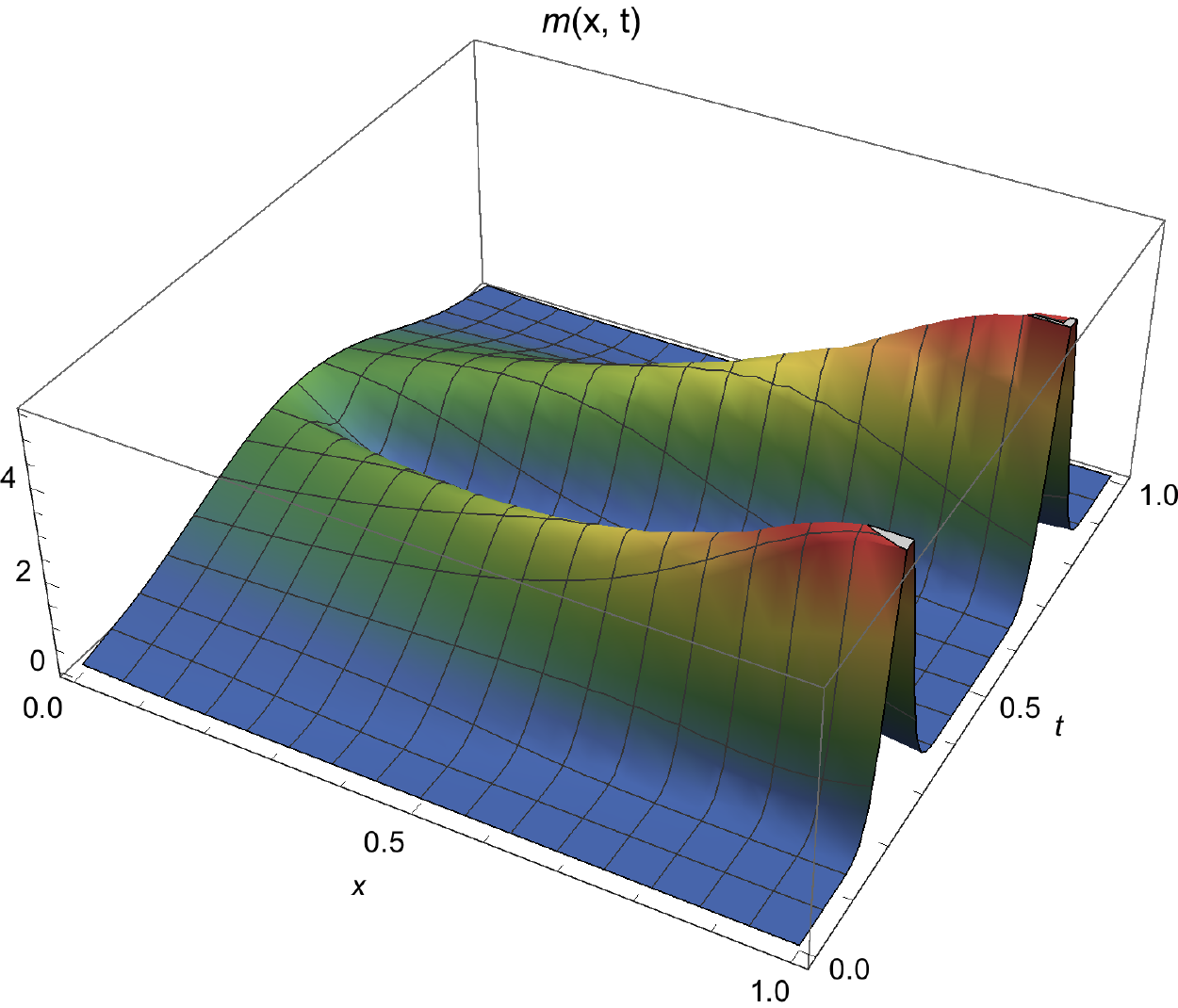}
			    \end{subfigure}%
			
			    \begin{subfigure}[t]{0.3\textwidth}
			        \centering
			        \includegraphics[width=1\textwidth]{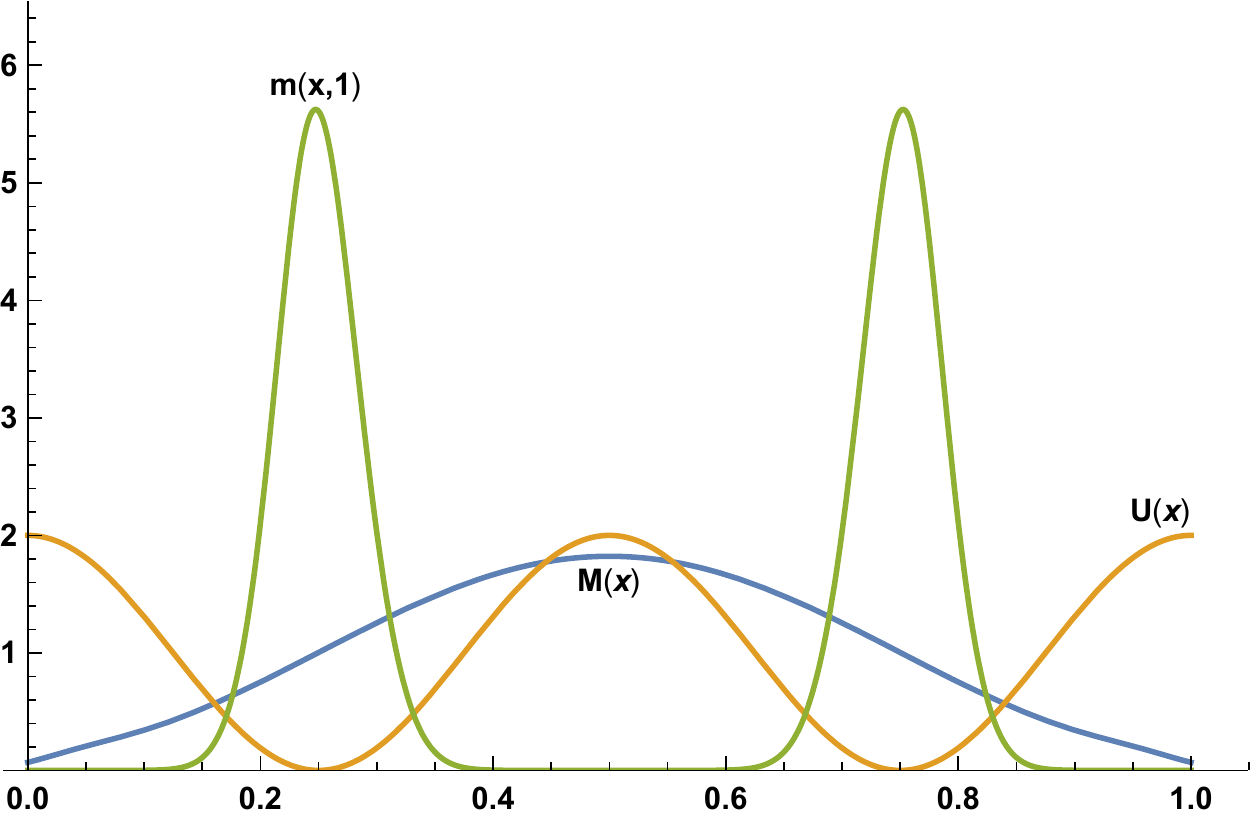}
			    \end{subfigure}%
			    ~
			    \begin{subfigure}[t]{0.3\textwidth}
			        \centering
			        \includegraphics[width=1\textwidth]{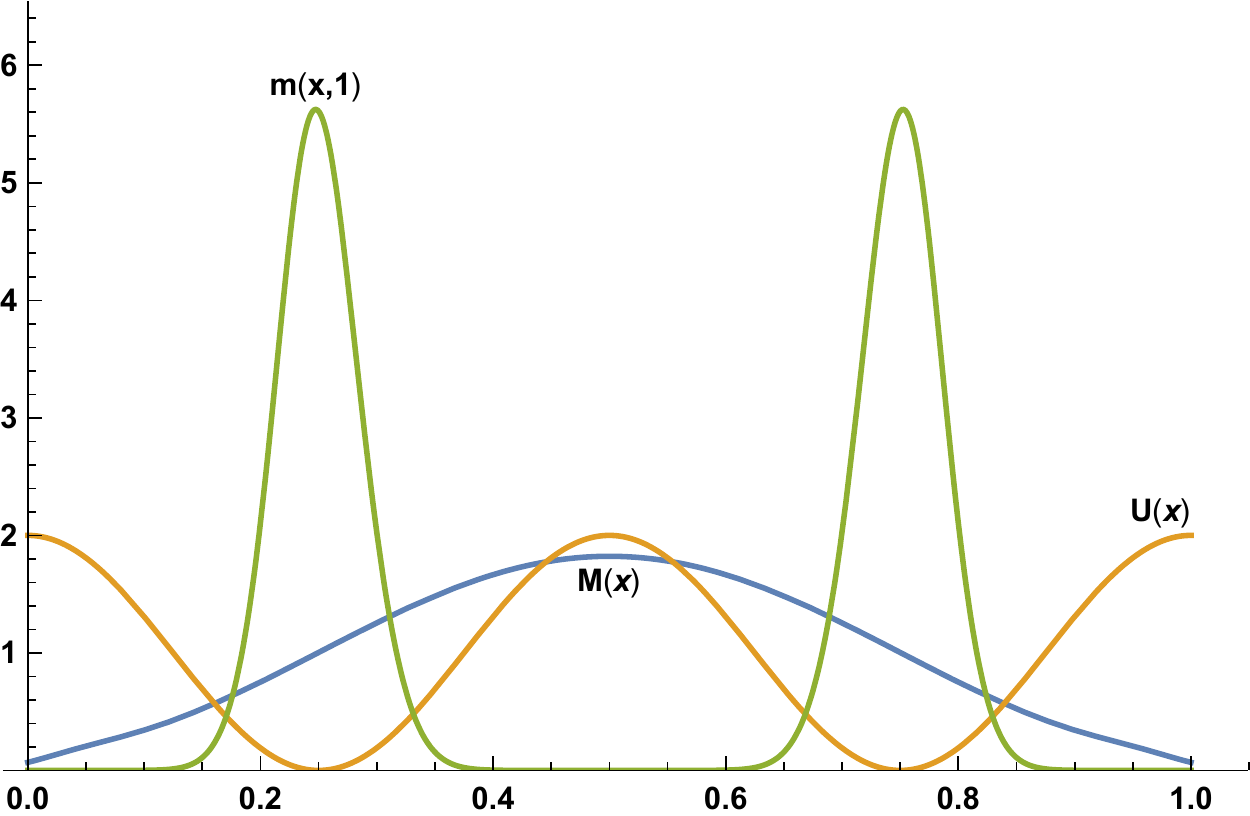}
			        \caption{$M(x)$ -- blue, $m(x,1)$ -- green, $U(x)$ -- orange.}
			    \end{subfigure}
				 ~
			    \begin{subfigure}[t]{0.3\textwidth}
			        \centering
			        \includegraphics[width=1\textwidth]{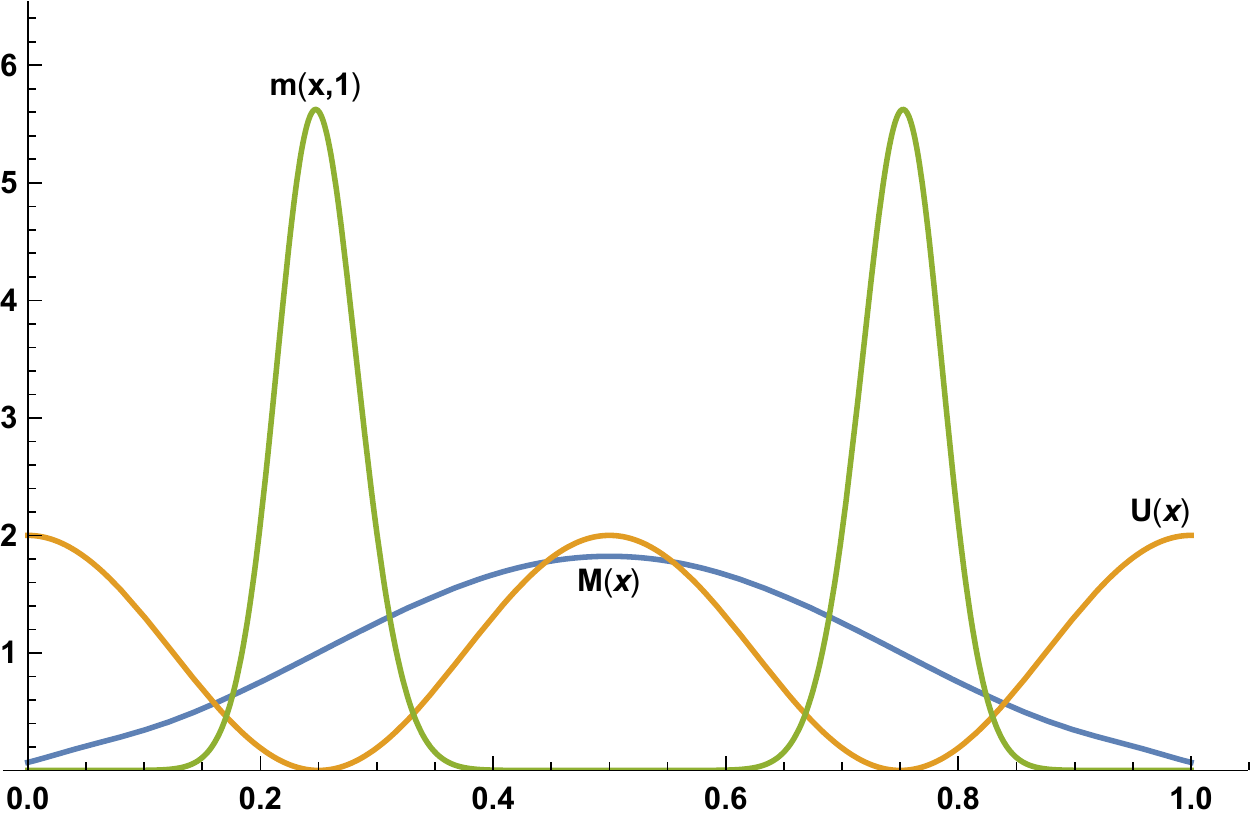}
			    \end{subfigure}%
				\caption{Simulations using Gaussian kernels with different parameters, $(\sigma, \mu) \in \left\{ (0.2,0.5), (0.2,1.5), (0.8,0.5) \right\}$, for each column.
						In the first row, we show a section of each kernel.
						In the second row, we plot the trajectories of the agents, $\{\bx(t,y_\alpha)\}_{\alpha=1}^Q$, at time $t\in[0,1]$ and initial positions $\{y_\alpha\}_{\alpha=1}^Q\subset \Tt$. 
						In the third row, we plot the time evolution of the distribution of players, $m(t,x)$.
						Each plot of the last row displays the initial-terminal conditions, $M(x)$ and $U(x)$, and the final distribution, $m(x,1)$. 
						}
				\label{1D_comparison}
			\end{figure}
\subsection{Two-dimensional examples} \label{sub:2_dimensional_examples}

Here, we consider the case of two-dimensional state space. The initial distribution of players and the terminal cost function are given by
\begin{equation*}
\begin{split}
M(x_1,x_2) =& 1 + \frac 1 2 \cos \lb \pi + 2 \pi \lb x_1 - x_2 \rb \rb + \frac 1 2 \sin \lb \frac \pi 2 + 2 \pi \lb x_1 + x_2 \rb \rb,\\
U(x_1,x_2) =& \frac 3 2 + \frac 1 2 \lb \cos \lb 6 \pi x_1 \rb + \cos \lb 2 \pi x_2 \rb \rb,~(x_1,x_2)\in \Tt^2,			
\end{split}
\end{equation*}
that are depicted in Figure \ref{fig:itc_2D}.
		\begin{figure}[h] 
		    \centering
		    \begin{subfigure}[t]{0.5\textwidth}
		        \centering
		        \includegraphics[width=1\textwidth]{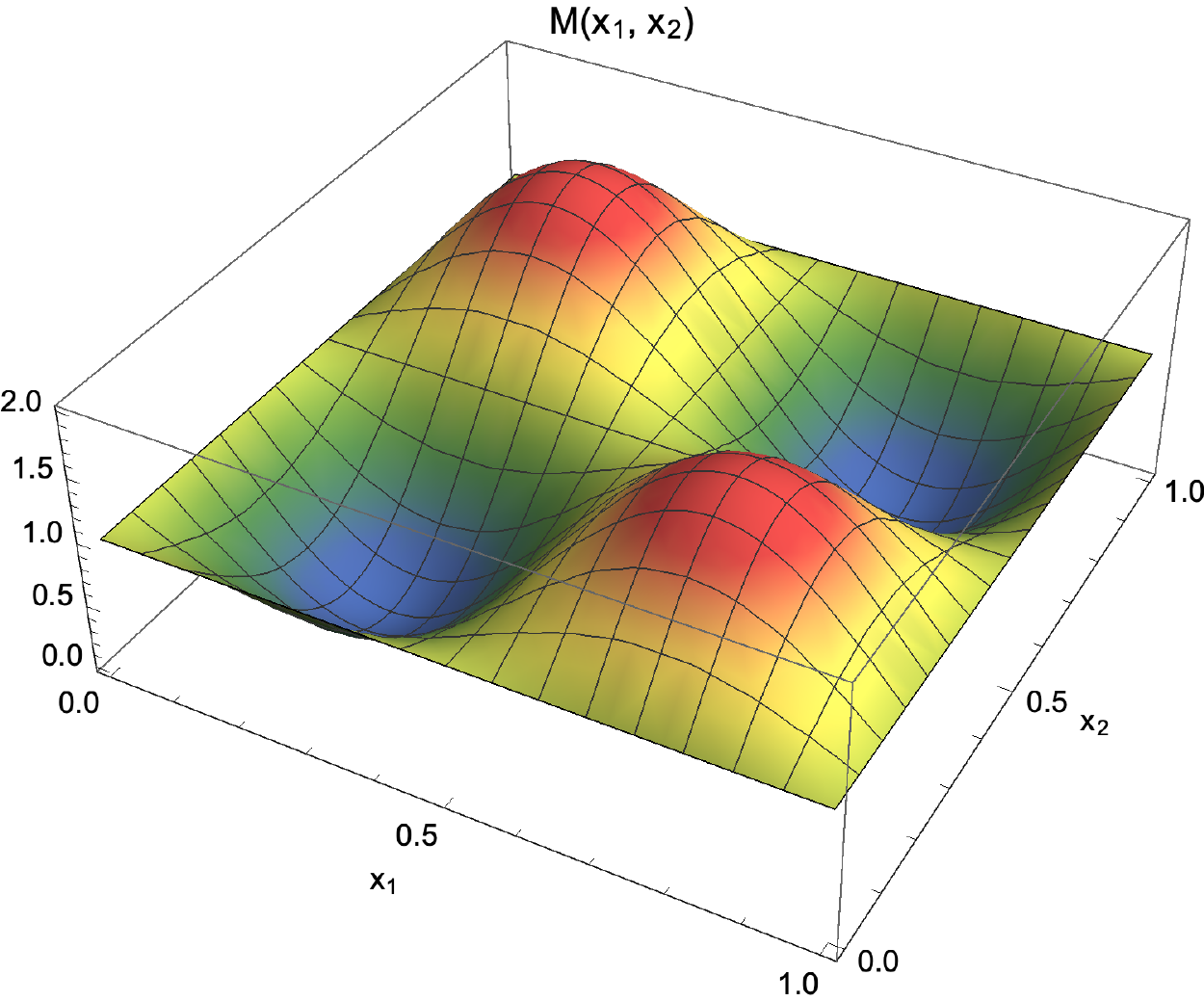} 
		        \caption{Initial distribution of agents, $M(x_1,x_2)$.}
		    \end{subfigure}%
		    ~ 
		    \begin{subfigure}[t]{0.5\textwidth}
		        \centering
		        \includegraphics[width=1\textwidth]{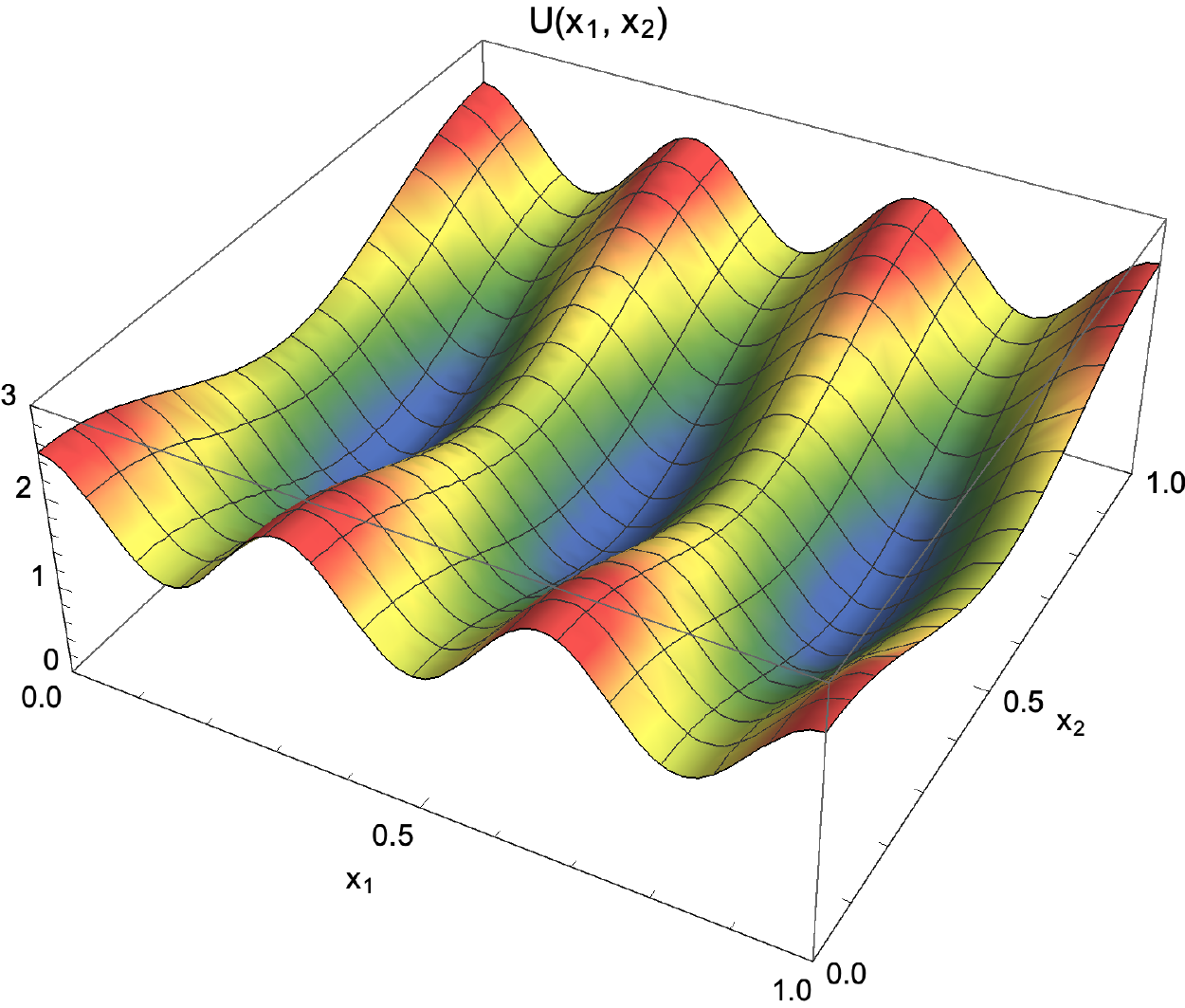}
		        \caption{Terminal cost function, $U(x_1,x_2)$.}
		    \end{subfigure}
			\caption{Initial-terminal conditions.}
			\label{fig:itc_2D}
		\end{figure}
		The corresponding expansion of the kernel is given by
			\begin{equation}\label{eq:kernel12dexpansionphi}
				\begin{split}
					&K^2_{\sigma,\mu}(x_1,x_2;y_1,y_2)\\
					=&\sum_{k,k'=1}^{\infty} \mu^2 e^{-\frac{\pi^2\sigma^2}{2}\left(\left[\frac{k}{2}\right]^2+\left[\frac{k'}{2}\right]^2\right)}\phi_k(x_1) \phi_k(y_1) \phi_{k'}(x_2) \phi_{k'}(y_2)\\
					=&\sum_{k,k'=1}^{\infty} \mu^2 e^{-\frac{\pi^2\sigma^2}{2}\left(\left[\frac{k}{2}\right]^2+\left[\frac{k'}{2}\right]^2\right)}\phi_{k,k'}(x_1,x_2) \phi_{k,k'}(y_1,y_2),
				\end{split}
			\end{equation}
		where
			\begin{equation}\label{eq:tensorphi}
				\phi_{k,k'}(x_1,x_2)=\phi_{k}(x_1)\phi_{k'}(x_2),~x_1,x_2 \in \mathbb{T},~k,k' \in \mathbb{N}.
			\end{equation}
		Thus, for a fixed $r$ we take as a basis functions the set:
			\begin{equation*}
				\begin{split}
					\{\phi_{1,1}, \phi_{1,2},\cdots,\phi_{1,r-1},\phi_{2,1},\cdots,\phi_{2,r-2},\cdots,\phi_{r-1,1}  \}
					= \{\psi_1, \psi_2,\cdots,\psi_{\frac{r(r-1)}{2}}  \}.
				\end{split}
			\end{equation*}
		Therefore, we take all functions $\phi_{k,k'}$ such that $k+k'\leq r$ and order them in the lexicographic order. 
		The corresponding matrices will be of size $\frac{r(r-1)}{2}\times \frac{r(r-1)}{2}$:
			\begin{equation}\label{eq:KJmats12d}
				\begin{split}
					{\bf K}=& \mbox{diag} \left(\mu^2 e^{-\frac{\pi^2\sigma^2}{2}\left(\left[\frac{k}{2}\right]^2+\left[\frac{k'}{2}\right]^2\right)}\right)_{k+k'\leq r},\\
					{\bf J}=& \mbox{diag} \left(\mu^{-2} e^{\frac{\pi^2\sigma^2}{2}\left(\left[\frac{k}{2}\right]^2+\left[\frac{k'}{2}\right]^2\right)}\right)_{k+k'\leq r},
				\end{split}
			\end{equation}
		where the order is again lexicographic.
		
		To compare the results, we use the same time and space discretization throughout all our $2-$dimensional experiments, as well as the same parameters for the numerical scheme. We discretize the time using a step size $\Delta t = \frac{1}{N}$. For the discretization of $M$ we use
		\begin{equation*}
		\begin{split}
		y_{\alpha \alpha'}=\lb \frac{\alpha}{Q+1},\frac{\alpha'}{Q+1} \rb, \quad c_{\alpha \alpha'}=\frac{M(y_{\alpha \alpha'})}{\sum_{\beta,\beta'=1}^Q M(y_{\beta \beta'})},~1\leq \alpha,\alpha' \leq Q.
		\end{split}
		\end{equation*}
		We choose $N=20,~Q=20$ and use eight basis functions, $r=8$. Furthermore, we set the numerical scheme parameters to $\lambda = 1,~\omega = \frac{1}{12}$ and $\theta=1$.
		
		In Figure \ref{2d_kernels}, we plot the Gaussian kernels used in the simulations, with different values of $\mu$ and $\sigma$.
		We see that the bigger $\mu$ is the higher the peak of the kernel, see (a) and (b) in Figure \ref{2d_kernels}.
		This means that each agent in (a) is more adverse of being in crowded areas than agents is (b), $\mu=0.75$ and $\mu=0.5$ respectively.
		For higher values of $\sigma$ we see that the kernel becomes flat, compare (b) with (c) in Figure \ref{2d_kernels}, for $\sigma=0.1$ and $\sigma=1$ respectively. As before, this means that the agents penalize others independent of mutual distances.
			\begin{figure}[h]
			    \centering
			    \begin{subfigure}[t]{0.31\textwidth}
			        \centering
			        \includegraphics[width=1\textwidth]{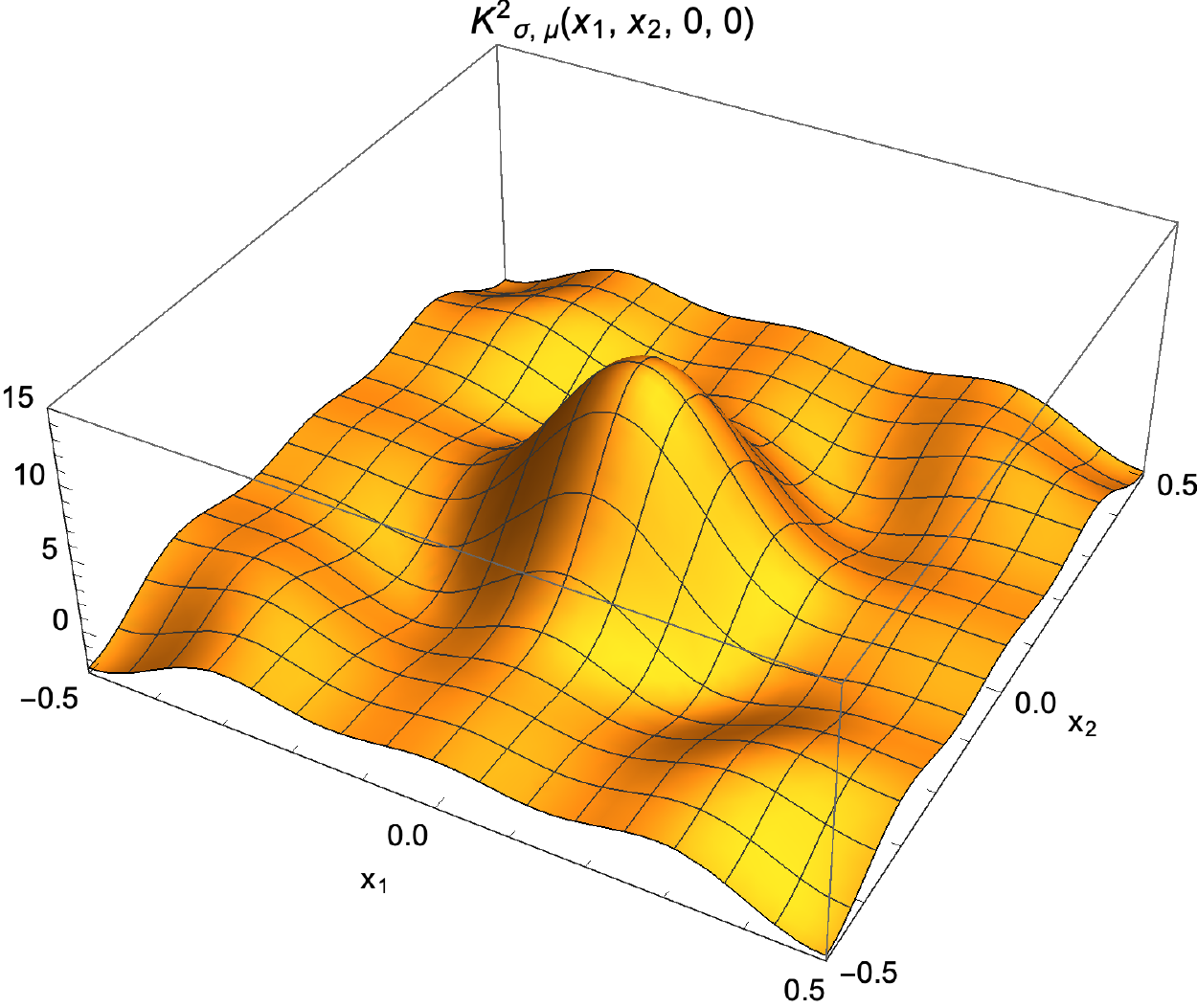} 
			        \caption{$K^2_{0.1, 0.75}(x_1,x_2;0,0)$.}
			    \end{subfigure}%
				 ~
			    \begin{subfigure}[t]{0.31\textwidth}
			        \centering
			        \includegraphics[width=1\textwidth]{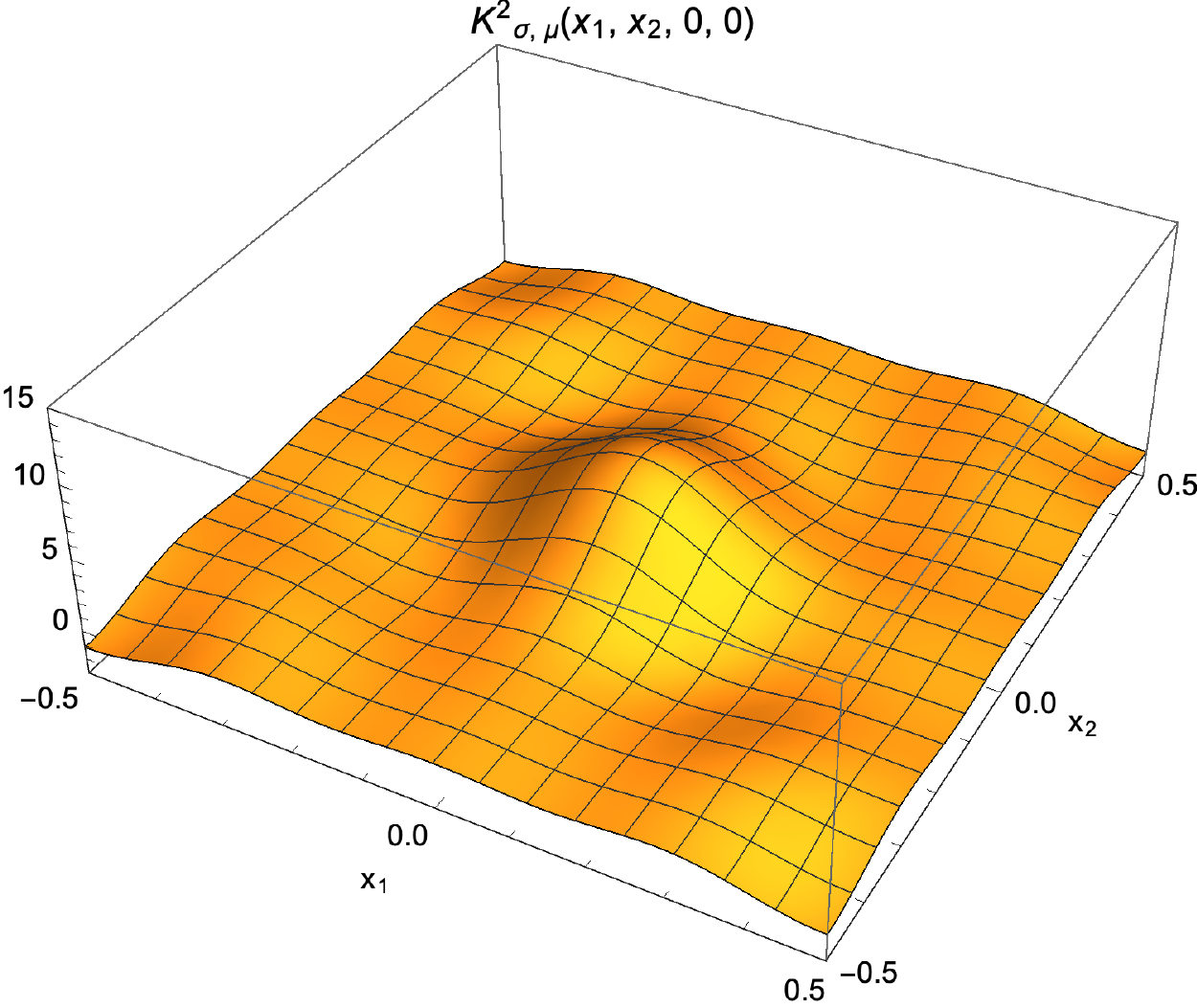}
			        \caption{$K^2_{0.1, 0.5}(x_1,x_2;0,0)$.}
			    \end{subfigure}%
			    ~ 
			    \begin{subfigure}[t]{0.31\textwidth}
			        \centering
			        \includegraphics[width=1\textwidth]{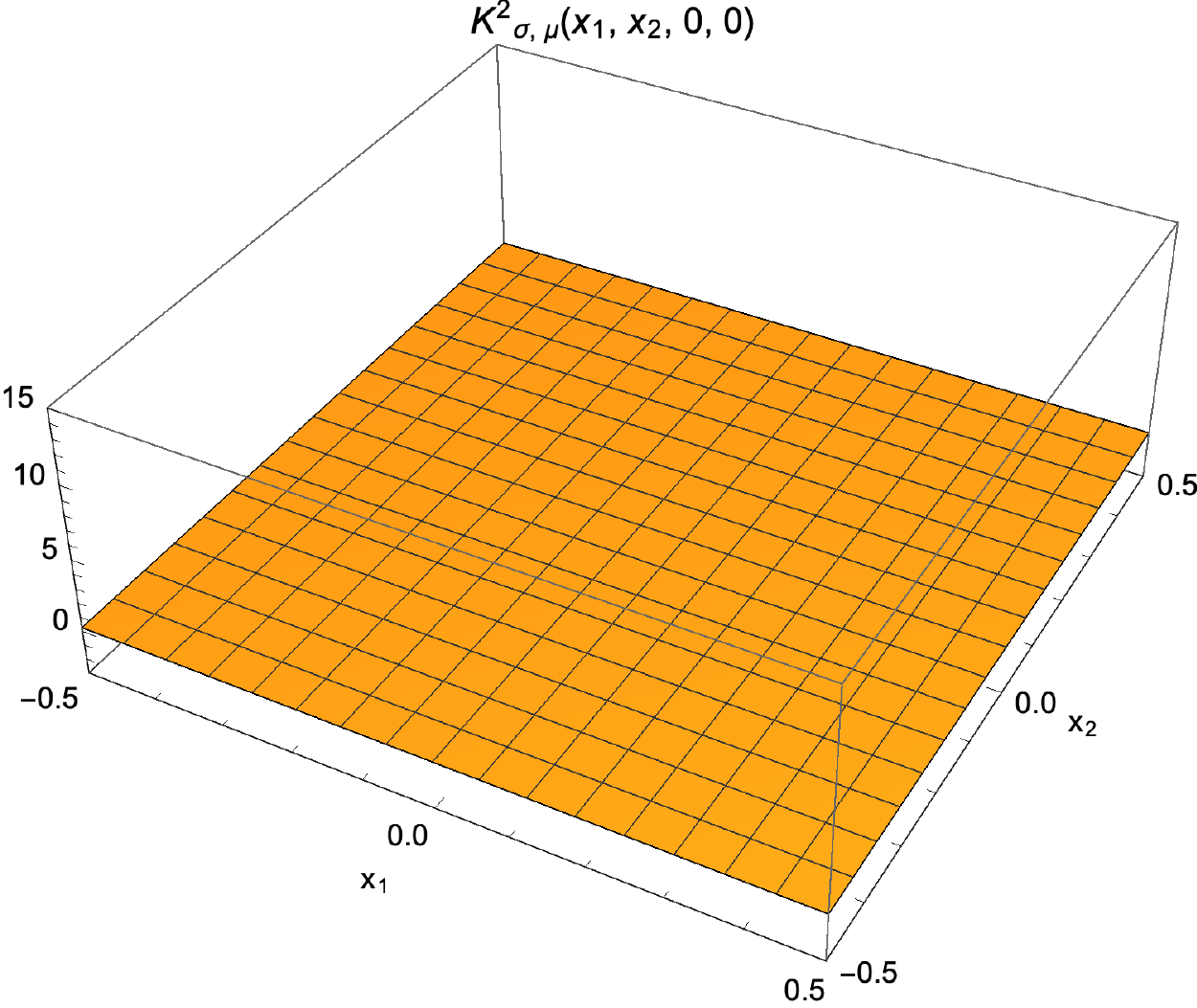}
			        \caption{$K^2_{1, 0.5}(x_1,x_2;0,0)$.}
			    \end{subfigure}%
				\caption{Plots of the Gaussian kernels for $(\sigma,\mu) \in \{(0.1,0.75),(0.1,0.5),(1,0.5)\}$.}
				\label{2d_kernels}
			\end{figure}
		
			In Figure \ref{fig:2d_comp}, we compare the simulation results using the same initial-terminal conditions, see Figure \ref{fig:itc_2D}, but different kernel functions (plotted in the first row of Figure \ref{fig:2d_comp}). In the last row of Figure \ref{fig:2d_comp} we have the final distribution of agents.
			
			We see that for larger values of $\mu$, left column compared with the middle one, the agents' concentration near low-cost regions of terminal cost, $U$, is less dense. We also see that when $\sigma$ is bigger the the agents become more indifferent to the density of the crowd, and concentrate more densely near low-cost values of $U$ -- see the right column in Figure 6 (f).
			
			As in the 1-dimensional case, looking to the projected trajectories in the 2-dimensional plane we observe that for flat kernel agents follow straight lines from the initial positions to closest low-cost regions of the terminal cost function. 
			\begin{figure}[h]
			    \centering
			    \begin{subfigure}[t]{0.3\textwidth}
			        \centering
			        \includegraphics[width=1\textwidth]{Gaussian2D_32Kernel_section.pdf}
			        \caption{$K^2_{0.1, 0.75}(x_1,x_2;0,0)$.}
			    \end{subfigure}
				 ~
			    \begin{subfigure}[t]{0.3\textwidth}
			        \centering
			        \includegraphics[width=1\textwidth]{Gaussian2D_31Kernel_section.pdf}
			        \caption{$K^2_{0.1, 0.5}(x_1,x_2;0,0)$.}
			    \end{subfigure}%
			    ~
			    \begin{subfigure}[t]{0.3\textwidth}
			        \centering
			        \includegraphics[width=1\textwidth]{Gaussian2D_33Kernel_section.pdf}
			        \caption{$K^2_{1, 0.5}(x_1,x_2;0,0)$.}
			    \end{subfigure}
		    
			    \begin{subfigure}[t]{0.3\textwidth}
			        \centering
			        \includegraphics[width=1\textwidth]{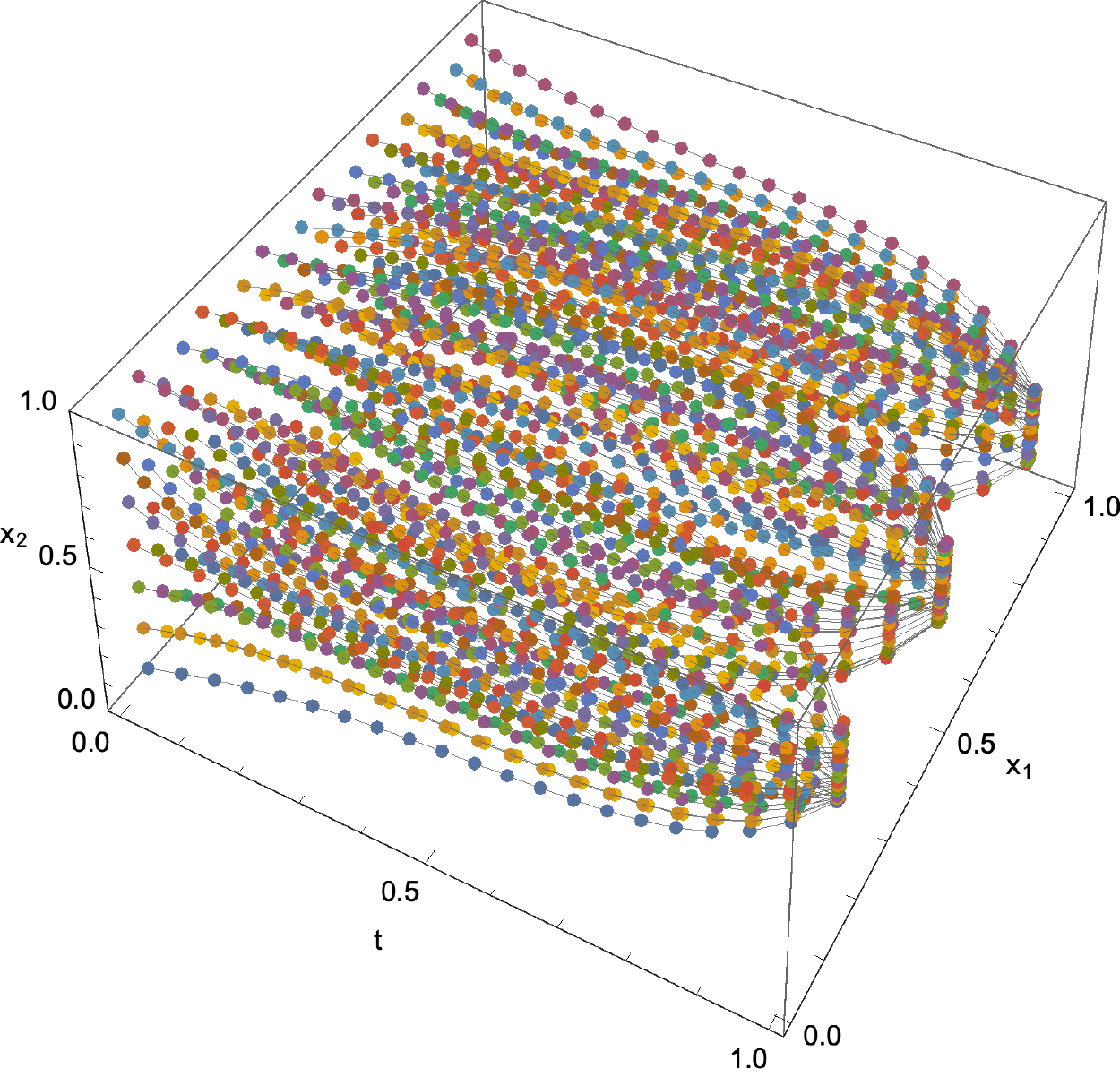}
			    \end{subfigure}
				 ~
			    \begin{subfigure}[t]{0.3\textwidth}
			        \centering
			        \includegraphics[width=1\textwidth]{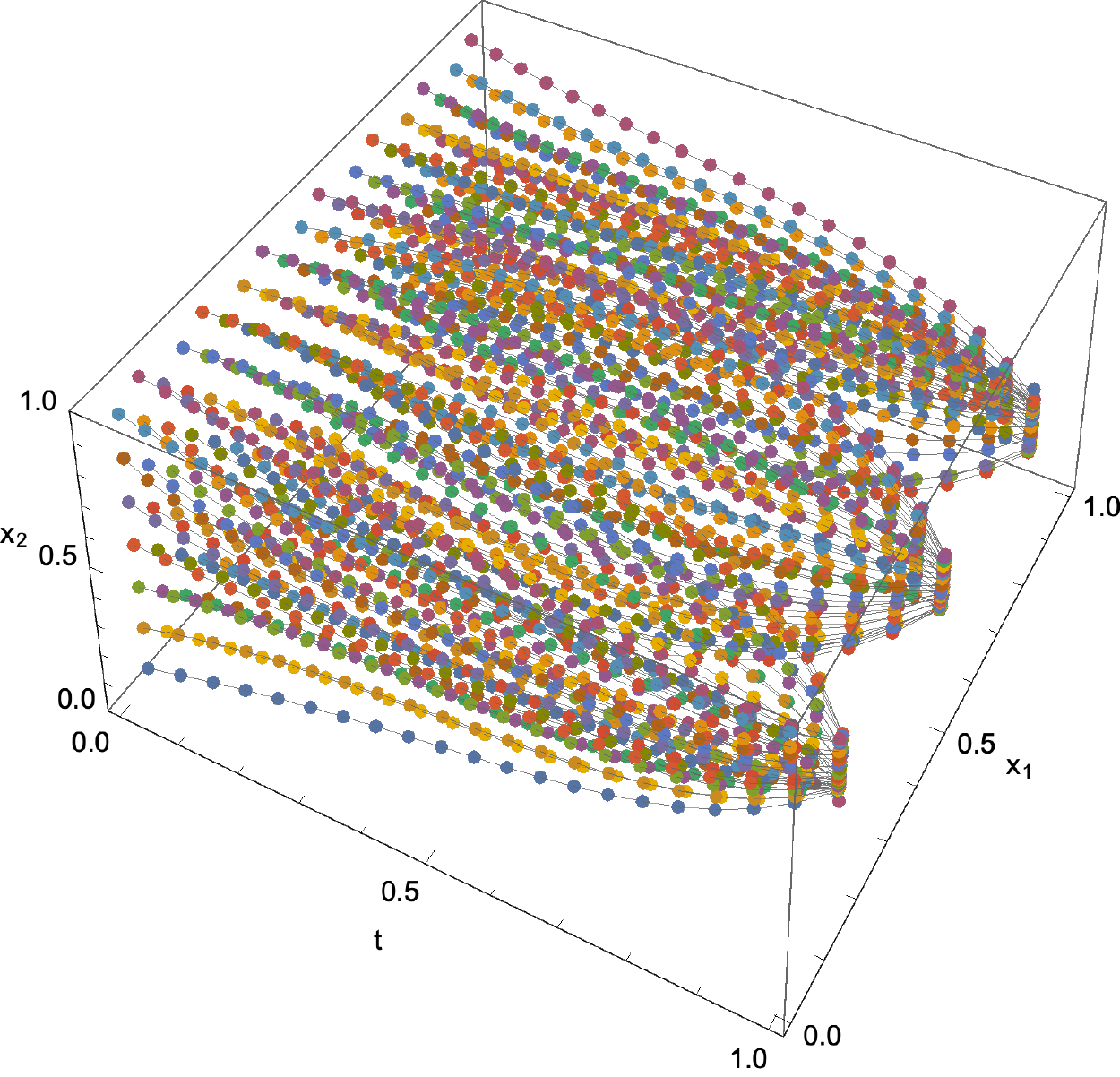}
			        \caption{Trajectories, $\bx(t,y_{\alpha})$.}
			    \end{subfigure}%
			    ~
			    \begin{subfigure}[t]{0.3\textwidth}
			        \centering
			        \includegraphics[width=1\textwidth]{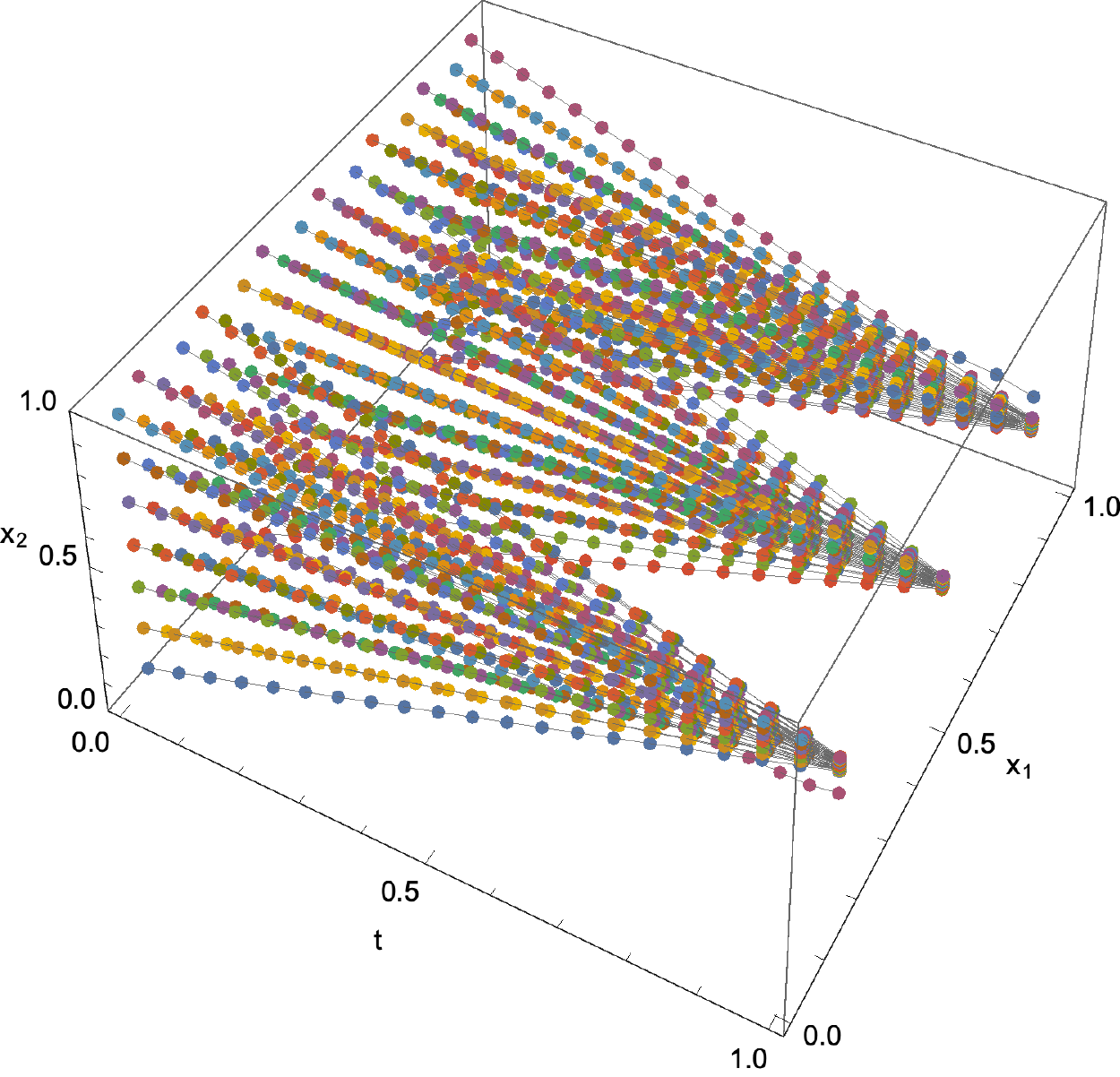}
			    \end{subfigure}%
				 
			    \begin{subfigure}[t]{0.3\textwidth}
			        \centering
			        \includegraphics[width=1\textwidth]{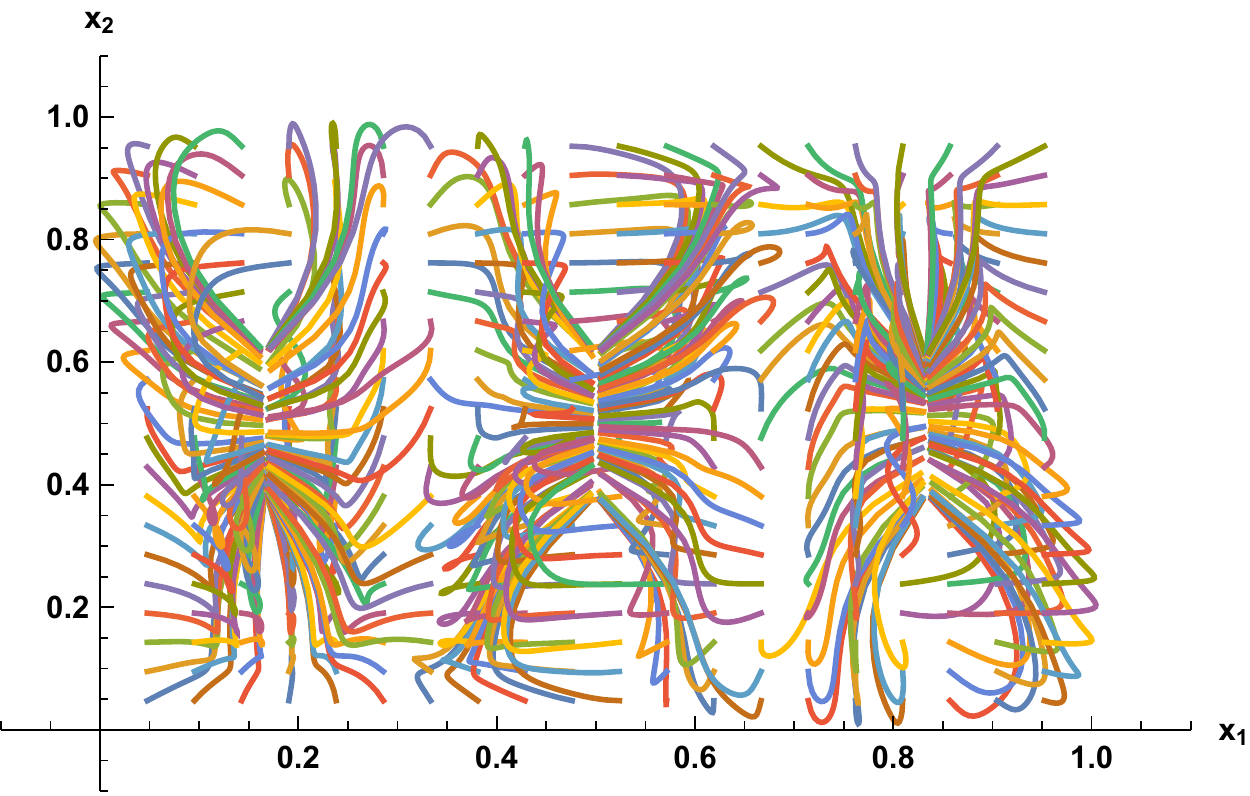}
			    \end{subfigure}
				 ~
			    \begin{subfigure}[t]{0.3\textwidth}
			        \centering
			        \includegraphics[width=1\textwidth]{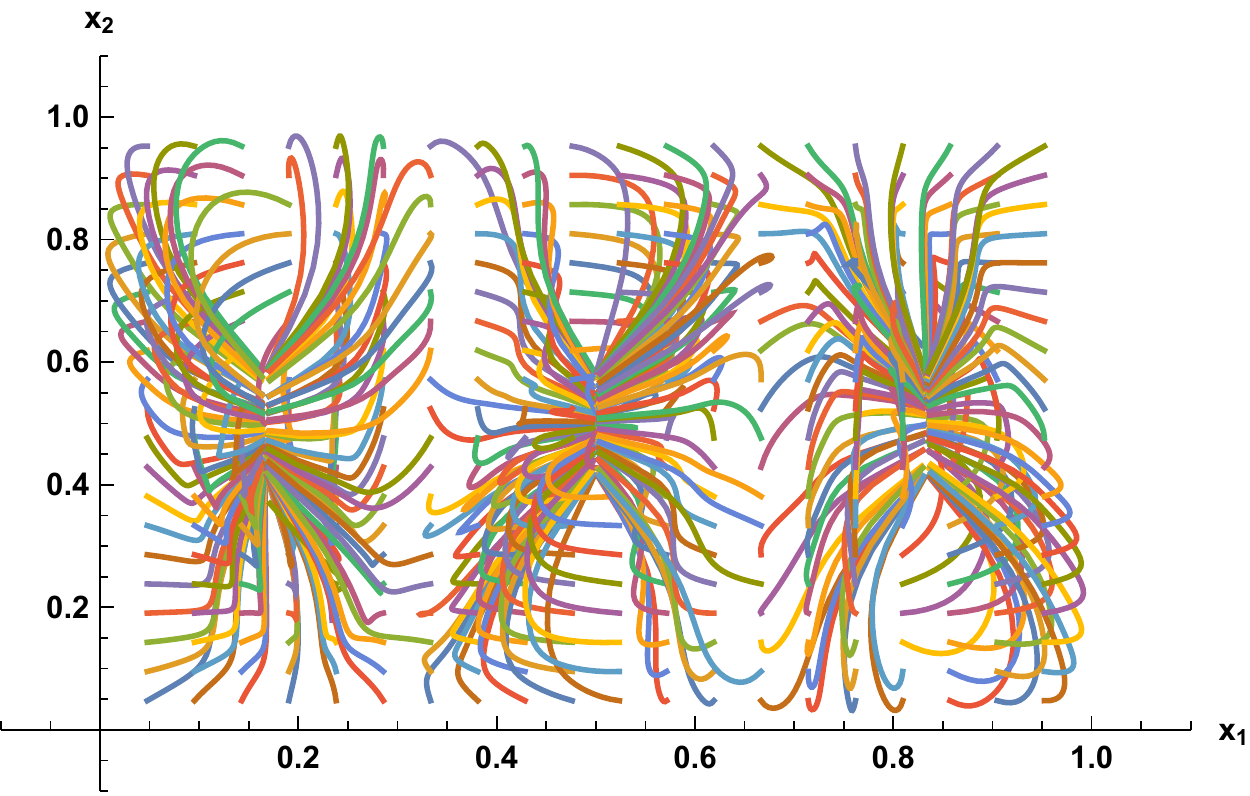}
			        \caption{Projected trajectories.}
			    \end{subfigure}%
			    ~
			    \begin{subfigure}[t]{0.3\textwidth}
			        \centering
			        \includegraphics[width=1\textwidth]{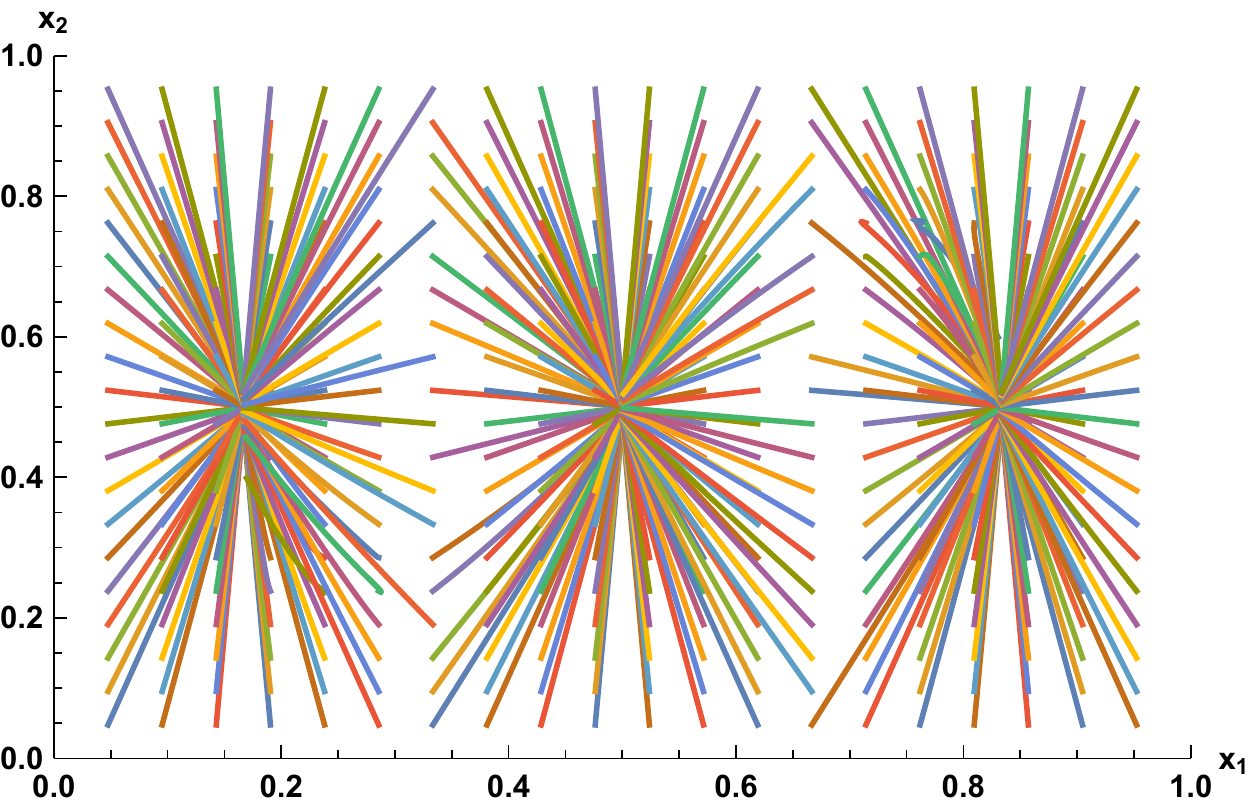}
			    \end{subfigure}%
				 
			    \begin{subfigure}[t]{0.3\textwidth}
			        \centering
			        \includegraphics[width=1\textwidth]{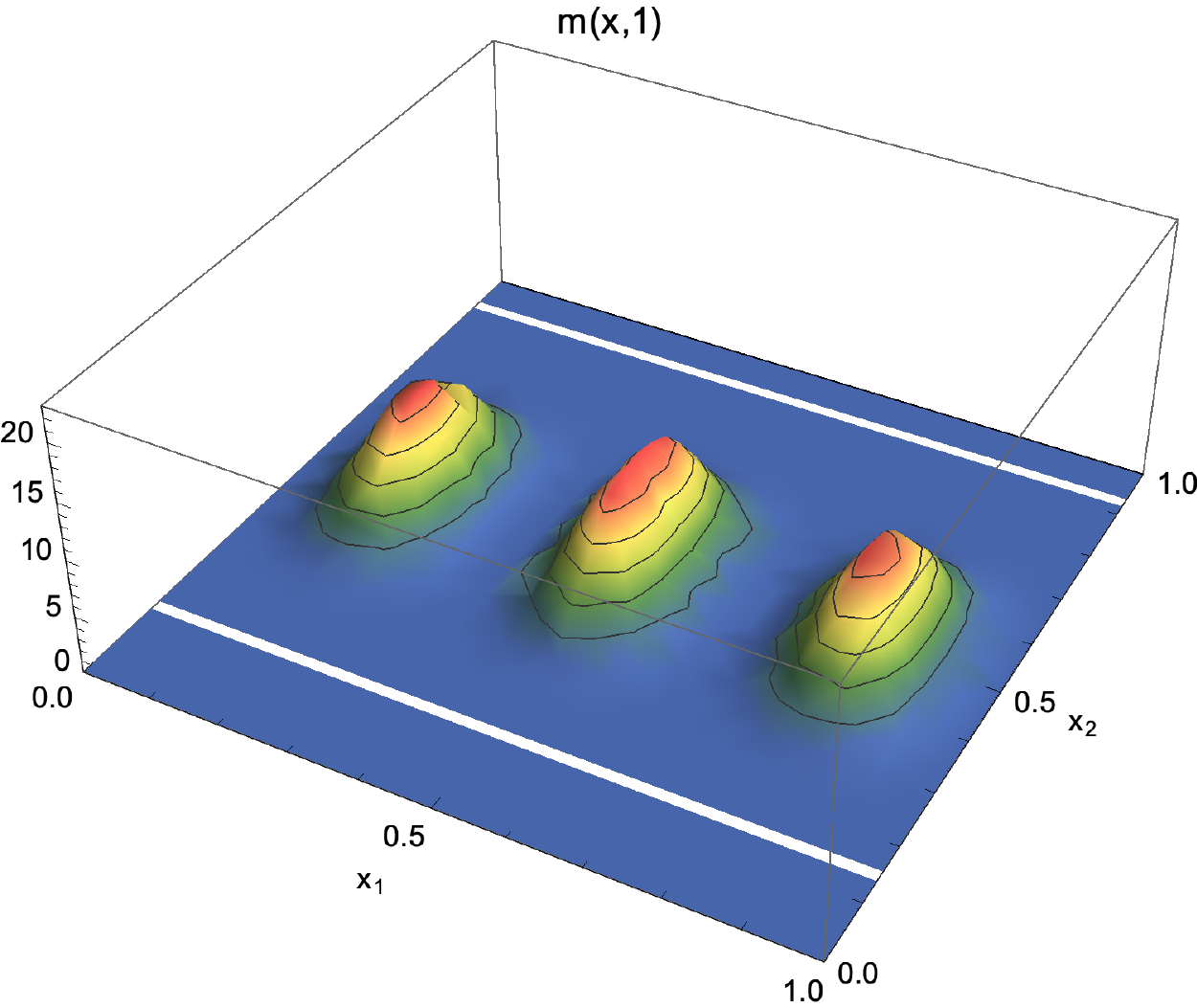}
			    \end{subfigure}
				 ~
			    \begin{subfigure}[t]{0.3\textwidth}
			        \centering
			        \includegraphics[width=1\textwidth]{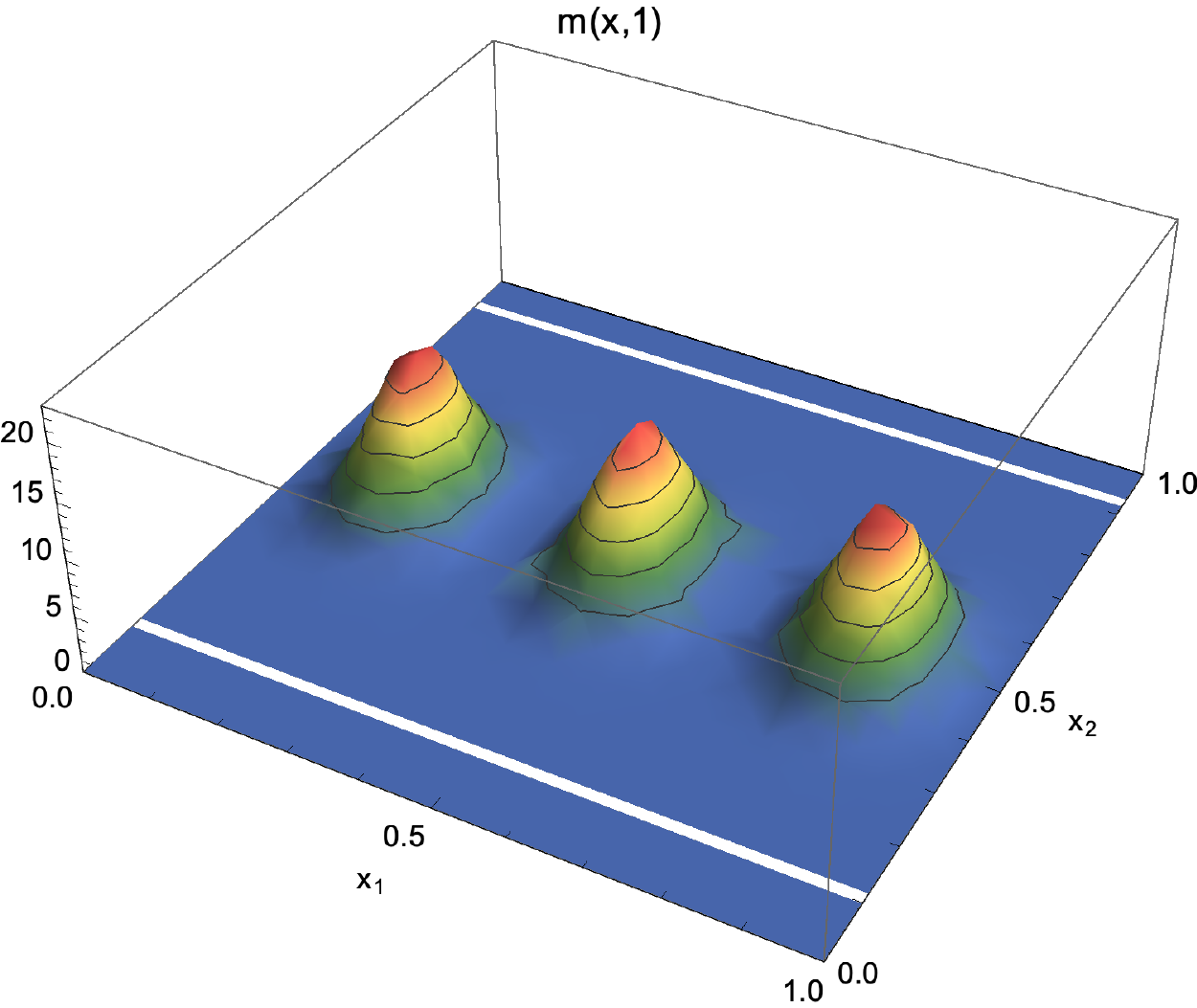}
			        \caption{Final density, $m(\bx,1)$.}
			    \end{subfigure}%
			    ~
			    \begin{subfigure}[t]{0.3\textwidth}
			        \centering
			        \includegraphics[width=1\textwidth]{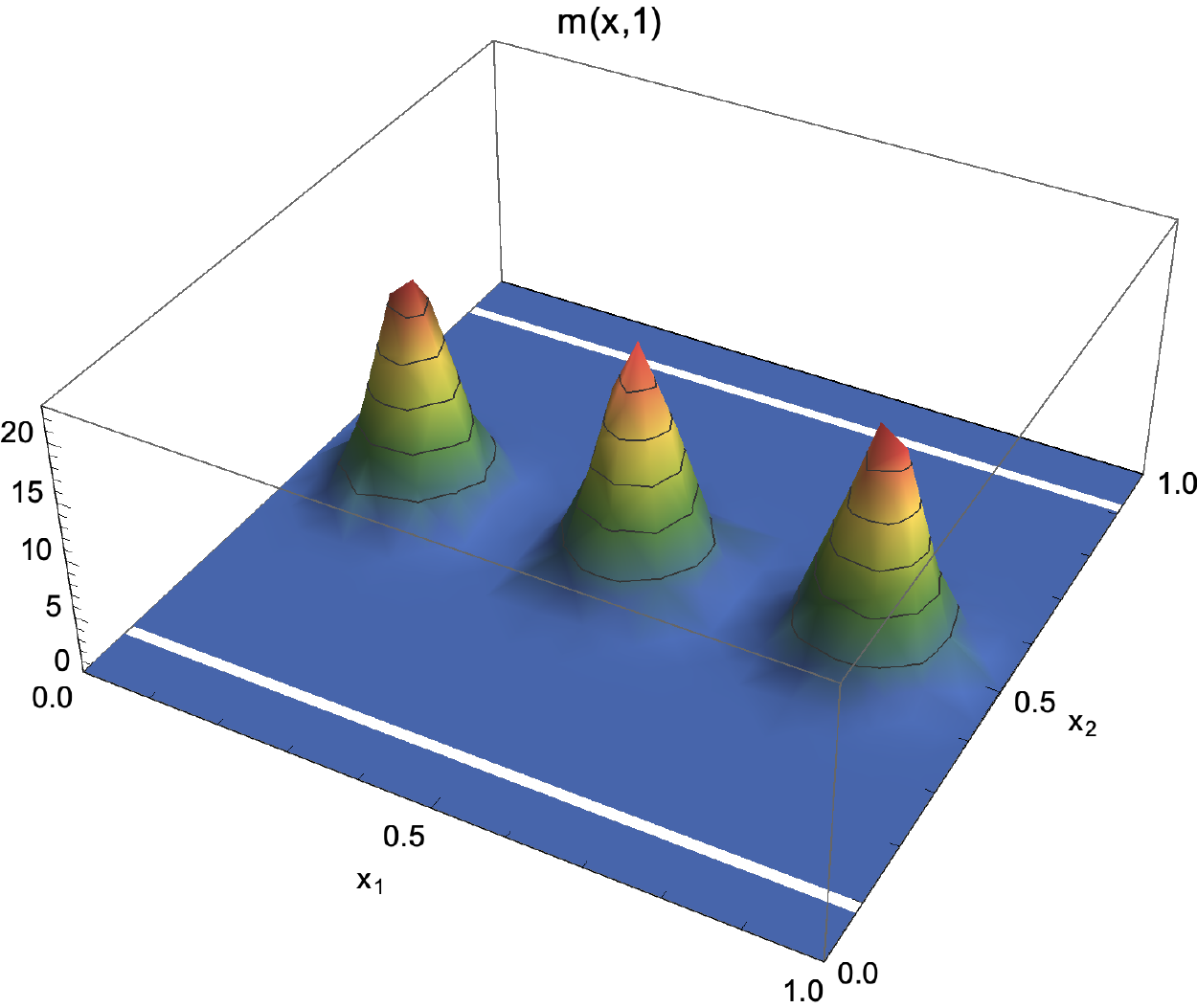}
			    \end{subfigure}%
				 %
				\caption{Simulations using Gaussian kernels with different parameters, $(\sigma, \mu) \in \left\{ (0.1,0.75), (0.1,0.5), (1,0.5) \right\}$, for each column.
						In the first row we show a section of each kernel.
						In the second row we show the trajectories of the agents, $\{\bx(t,y_{\alpha \alpha'})\}_{\alpha,\alpha'=1}^Q,~t\in[0,1],$ with initial positions $\{y_{\alpha \alpha'}\}_{\alpha,\alpha'=1}^Q\in \Tt^2$. 
						In the third row, we plot the 2D projection of the trajectories. And in the last row, we plot the final distribution of the agents, $m(\bx,1)$. 
						}
				\label{fig:2d_comp}
			\end{figure}

\frenchspacing
\bibliographystyle{plain}
\bibliography{nonlocalPM.bib}

\end{document}